\newcommand{\RR}{{\mathbb{R}}}
\newcommand{\EE}{{\mathbb{E}}}
\newcommand{\NN}{{\mathbb{N}}}
\newcommand{\PP}{{\mathbb{P}}}
\newcommand{\WW}{{\mathbb{W}}}
\newcommand{\CF}{{\mathcal{F}}}
\newcommand{\CG}{{\mathcal{G}}}
\newcommand{\CR}{{\mathcal{R}}}
\newcommand{\CA}{{\mathcal{A}}}
\newcommand{\CC}{{\mathcal{C}}}
\newcommand{\CI}{{\mathcal{I}}}
\newcommand{\CS}{{\mathcal{S}}}
\newcommand{\CL}{{\mathcal{L}}}
\newcommand{\CT}{{\mathcal{T}}}
\newcommand{\indic}{\ensuremath{\mathds 1}}
\newcommand{\comment}[1]{}
\newtheorem{thm}{Theorem}[section]
\newtheorem{cor}[thm]{Corollary}
\newtheorem{lem}[thm]{Lemma}
\newtheorem{pro}[thm]{Proposition}
\newtheorem{rem}[thm]{Remark}
\newtheorem{rems}[thm]{Remarks}
\newtheorem{ass}[thm]{Assumption}
\def \Proof{\noindent{\bf Proof:} \ }
\def\eps{\varepsilon}
\def \exp{ {\rm exp} }
\def \lim{ {\rm lim} }
\let\inf\relax \DeclareMathOperator*\inf{\vphantom{p}inf}
\def \tq{\, |\,}
\def \egal {\stackrel{def}{=}}
\title[A Dynkin game with incomplete information]{A Dynkin game on assets with incomplete information on the return}
\author[Tiziano De Angelis, Fabien Gensbittel, St\'ephane Villeneuve]{Tiziano De Angelis, Fabien Gensbittel, St\'ephane Villeneuve}
\keywords{Zero-sum games; Nash equilibrium; incomplete information; free boundaries;}
\address{T.~De Angelis: School of Mathematics, University of Leeds, Woodhouse Lane, LS2 9JT Leeds, UK.}
\address{F.~Gensbittel and S.~Villeneuve: Toulouse School of Economics (TSE-R, Universit\' e Toulouse 1 Capitole), 21 all\' ee de Brienne, 31000
Toulouse, France.}
\email{\href{mailto:t.deangelis@leeds.ac.uk}{t.deangelis@leeds.ac.uk}}
\email{\href{mailto:fabien.gensbittel@tse-fr.eu}{fabien.gensbittel@tse-fr.eu}} 
\email{\href{stephane.villeneuve@tse-fr.eu}{stephane.villeneuve@tse-fr.eu}} 
\date{\today}
\thanks{{\em Acknowledgments}: T.~De Angelis was partially supported by the EPSRC grant EP/R021201/1. \\
We thank an anonymous referee whose insightful comments contributed to the discussion in Section \ref{sec:concl}.}
\numberwithin{equation}{section}
\begin{document}     
  
\begin{abstract} 
This paper studies a 2-players zero-sum Dynkin game arising from pricing an option on an asset whose rate of return is unknown to both players. Using filtering techniques we first reduce the problem to a zero-sum Dynkin game on a bi-dimensional diffusion $(X,Y)$. Then we characterize the existence of a Nash equilibrium in pure strategies in which each player stops at the hitting time of $(X,Y)$ to a set with moving boundary. A detailed description of the stopping sets for the two players is provided along with global $C^1$ regularity of the value function. 
\end{abstract}

\maketitle

\section{Introduction}

Zero-sum optimal stopping games (Dynkin games) have received a lot of attention since the seminal paper by Dynkin \cite{dynkin}, see also the classical references \cite{bensoussanfriedman} and \cite{lepeltiermaingueneau}.  In particular, these games have found applications in mathematical finance where the arbitrage-free pricing of American options  with early cancellation (game options) relies on the computation of the value of a zero-sum game of optimal stopping between the buyer and the seller (see \cite{kifer},\cite{kyprianou}). A common assumption in the financial application of Dynkin games is that the players have complete information about the parameters of the underlying stochastic process. In practice, however, there are many situations in which parameters are difficult to estimate and in particular this is true for the drift of the process. 

Our work is inspired by the real option literature, where the value of an investment (like the beginning of the extraction of a natural resource or the investment in a R\&D programme) is a contingent asset, depending on the price $S$ of some underlying asset, and it is computed by using arbitrage arguments (see \cite{DixitPindyck}). It is known that the problem itself  boils down to an optimal timing decision, hence optimal stopping is the key mathematical tool. Following \cite{DixitPindyck}, we assume that the price process evolves according to a geometric Brownian motion
\[
\frac{dS_t}{S_t}=\mu\,dt+\sigma dB_t
\]
where $\mu$ is the log-return on the so-called risk-adjusted asset price.

The capital asset pricing model allows us to determine the risk-adjusted discount rate $r$ which is used to discount future cashflows (notice that this is in general larger than the risk-free rate, see, e.g., \cite[p.~178]{DixitPindyck}). In line with \cite{DixitPindyck} we assume $\mu\le r$ and denote the difference $r-\mu$ by $\delta_0$. The condition $\mu\le r$ avoids that the value of an investment project whose payoff is linear in $S$ becomes unbounded (which would lead the investor to delay the investment forever). It is known that estimating the return of the risk-adjusted price of an asset is a challenging task and we embed this feature in our model by considering an asset with a partially unobservable drift $\mu$.

A typical problem that we have in mind is the one of a firm holding a concession to drill oil wells. Being aware of the social costs and benefits of the oil field development, a public authority would like to sign a contract where the concession rights can be cancelled at any time, pending the payment of a contractual penalty. From the investor's point of view (and simplifying the model for the benefit of tractability) the decision to invest would be profitable only if the value of the underlying commodity can compensate for the fixed cost of investment $K>0$. In this sense one can interpret the option to invest as a Call option on the price of the commodity, with strike equal to $K$. A cancellation of the agreement would require a payment equal to the Call payoff plus a penalty (i.e., to compensate for the lost investment opportunity).  

Motivated by the above considerations, in this paper we study zero-sum optimal stopping games with incomplete information about the return of the underlying asset. We are interested in the existence of the value  as well the existence and characterization of Nash equilibria for the game. To enable a detailed theoretical analysis, we shall keep the real option model simple while, at the same time, drawing from the vast literature on Israeli options (initiated by \cite{kifer}).

We assume that the buyer (player 1) and the seller (player 2) of a Call option on an asset $S$ agree on a constant risk-adjusted discount rate $r>0$, which is used to discount future payoffs in the game (i.e., we assume that players have the same belief on the future of the economy). Moreover we model the uncertainty on the the asset return by assuming that the adjusted log-return is random and only partially observable. To avoid confusion with the previously introduced notation we denote it by $\tilde\mu$ (as opposed to $\mu$ in the previous page). In particular we assume $\tilde \mu=r-\delta_0 D$, where $\delta_0>0$ is a constant and $D\in\{0,1\}$ is random and unobservable to the players. 

Our choice for $\tilde \mu$ ties up nicely with the usual concept of net return on a stock paying dividends at a rate $\delta_0 D$. Although other choices for $\tilde\mu$ are clearly possible, we shall see below that this basic model already poses significant mathematical challenges. To the best of our knowledge this is the first paper addressing a zero-sum game with partial information via a probabilistic analysis of the related free boundary problem, hence we leave other parameter choices for future work.

The asset in our model evolves, on a probability space $(\Omega,\CF,\PP)$, according to 
\begin{align}\label{BS}
dS_t= (r-\delta_0 D)S_tdt+\sigma S_t dB_t, \quad S_0=x>0,  
\end{align}
where $(B_t)_{t\ge 0}$ is a Brownian motion and $\sigma>0$ is the volatility. The random variable $D$ takes the values $0$ or $1$ with $\PP(D=1)=y$ and it is assumed to be independent of $(B_t)_{t\ge 0}$. We denote by $\CF^{S}:=(\CF^{S})_{t\ge 0}$ the filtration generated by the observed process $S$ and by $\overline{\CF}^S:= (\overline{\CF}^{S})_{t\ge 0}$ its augmentation with $\PP$-null sets (see further details in Section \ref{sec:dyn}). Then we define by $\CT^S$ the set of $\overline{\CF}^{S}$-stopping times. 

In our game we fix $K>0$ and $\eps_0>0$ and let
\begin{align}\label{G1G2}
G_1(x):=(x-K)^+,\qquad G_2(x):=(x-K)^++\eps_0
\end{align}
be the payoff for player 1 (the option holder) and the cost of cancellation for player 2 (the seller), respectively. Then the formulation of our game is the following: the expected discounted payoff of the game is
\begin{equation} \label{gameformulationone}
 M_{x,y}(\tau,\gamma) =\EE[ e^{-r\tau}G_1(S_{\tau})\indic_{\{\tau \le \gamma\}}+ e^{-r \gamma}G_2(S_{\gamma})\indic_{\{\gamma < \tau\}} ]
\end{equation}
where $\tau,\gamma\in\CT^S$. In particular the option holder picks $\tau$, in order to exercise the option, and the seller picks $\gamma$, in order to cancel it. The holder aims at maximising her revenue while the seller wants to minimise costs.  By convention, we set 
\[
e^{-r\tau}G_1(S_{\tau})\indic_{\{\tau = \infty\}}=e^{-r \gamma}G_2(S_{\gamma})\indic_{\{\gamma =\infty\}}=0,\quad\PP-a.s.
\]

The notation $M_{x,y}$ accounts for the dependence of the stopping functional on the initial asset value and on the a-priori probability of the event $\{D=1\}$. This notation will be fully justified and explained in Section \ref{sec:dyn} below.

As usual we define the upper value and the lower value of the stopping game, respectively by
\begin{align}\label{upVloV}
\overline{V}(x,y)= \inf_{\gamma }\; \sup_{\tau }\; M_{x,y}(\tau,\gamma)\quad \hbox{ and } \quad \underline{V}(x,y)=  \sup_{\tau }\; \inf_{\gamma }\; M_{x,y}(\tau,\gamma).
\end{align}
When $\underline{V}(x,y)=\overline{V}(x,y)$, the game has a value $V(x,y):=\underline{V}(x,y)=\overline{V}(x,y)$. Moreover, if there exist two stopping times $(\tau_*,\gamma_*)$ such that
\begin{align*}
M_{x,y}(\tau,\gamma_*) \le M_{x,y}(\tau_*,\gamma_*) \le M_{x,y}(\tau_*,\gamma)
\end{align*}
for all stopping times $\tau$ and $\gamma$, the pair $(\tau_*,\gamma_*)$ is a saddle point or a {\it Nash equilibrium} for the optimal stopping game and in that case the game has a value with $V(x,y)=M_{x,y}(\tau_*,\gamma_*)$.

In the context of Israeli options one has $\PP(D=1)=1$ or $\PP(D=1)=0$ (the non-dividend case).
Explicit computations have been established by \cite{ekstromvilleneuve} and \cite{yam} in the perpetual case. Both papers show that the dividend parameter $\delta_0$ plays an important role for the existence of an equilibrium in the game and this will be the case also in the present work.

We recall now some results from the existing literature so that we can later discuss the mathematical novelty of our work.
The \emph{existence of the value} for optimal stopping games with multi-dimensional Markov processes was proved in \cite{ekstrompeskir} using martingale methods and by Bensoussan and Friedman \cite{bensoussanfriedman} via variational inequalities. These methods require suitable integrability of the payoff processes, i.e., in our notation, the processes $e^{-rt}G_i(S_t)$, $i=1,2$ must be uniformly integrable. When such condition is not fulfilled, the existence of the value was proven in \cite{ekstromvilleneuve} but only for one-dimensional diffusions. Results in \cite{ekstromvilleneuve} rely upon a generalized type of concavity introduced in \cite{dynkinyushkevich} and brought up to date in \cite{dayanikkaratzas}.

On the other hand sufficient conditions for the \emph{existence of Nash equilibria} in Markovian setting have been studied in \cite{ekstrompeskir} and \cite{ekstromvilleneuve}. For a rather general class of Markov processes these conditions include the above mentioned uniform integrability of the payoff processes. In the special case of one-dimensional diffusions weaker integrability may instead be sufficient (see \cite{ekstromvilleneuve}, Proposition 4.3).

In our setting we are faced with two main technical difficulties in establishing existence of the value and of a Nash equilibrium: (i) the process $S$ is not Markovian and (ii) it fails to fulfil the condition of uniform integrability (see Remark \ref{notUI}), in particular for any initial condition $S_0=x\in\RR_+$ we have
\begin{equation}\label{uniformbound}
\EE\left(\sup_{0\le t < \infty} e^{-rt}G_i(S_t)\right)=+\infty,\quad i=1,2.
\end{equation}

To overcome the first difficulty we rely upon filtering theory and increase the dimension of our state space. Informally we could say that we take into account the progressive update of the players' estimate on $D$, based on the observation of $S$. This approach leads us to study a two dimensional Markovian system which we denote by $(X_t,Y_t)_{t\ge 0}$, where (at least formally) $X=S$ and $Y_t=\EE[D|\CF^S_t]$. On the other hand, to tackle the lack of uniform integrability and prove the existence of the value of the game, we adapt methods developed by Lepeltier-Maingueneau \cite{lepeltiermaingueneau} and Ekstrom-Peskir \cite{ekstrompeskir}. 

After we prove existence of the value, we are then in the position to carry out a detailed analysis of the structure of the stopping sets for the two players, i.e.~the sub-sets of the state space in which $V=G_i$, $i=1,2$. 
Denoting $\CS_i:=\{V=G_i\}$, $i=1,2$ we study properties of the boundaries of $\CS_1$ and $\CS_2$ which we subsequently use to state conditions for the existence of a saddle point (Nash equilibrium). The latter is provided in terms of hitting times to $\CS_1$ and $\CS_2$. 

In our analysis we use two equivalent representations of the two-dimensional dynamics. These are linked to one another by a deterministic transformation -- the so-called reduction of second order PDEs to normal form (observe that a similar transformation was already used by several papers like \cite{DMV},\cite{ekstromlu} and \cite{gapeevshiryaev} among others). Indeed we first observe that the process $(X_t,Y_t)$ is driven by only one Brownian motion and it is therefore degenerate; then we perform a change of coordinates to obtain a new process $(Z_t,Y_t)$. Here $Z_t$ is deterministic and either increasing or decreasing, depending on the choice of parameters in the problem. Effectively the process $Z$ plays the role of a `time' process. 

We would like to emphasize that the probabilistic study of free boundary problems related to zero-sum Dynkin games on two dimensional diffusions has not received much attention so far. Works in this direction but in a parabolic setting are \cite{DeAFe14} and \cite{yam}. Our analysis here goes beyond results in those papers by showing for example that the value of the game is a globally $C^1$ function of the state variables $(x,y)$. This type of regularity is much stronger than the well-known \emph{smooth-fit}, which gives continuity of one directional derivative with respect to one state variable. Related work on $C^1$ regularity is contained in \cite{DeAPe18}, which however does not cover our game setting.

The outline of the paper is as follows. In Section 2, we specify the model and provide a Markovian formulation of the zero-sum game. Existence and continuity of the value for the game \eqref{gameformulationone} is obtained in Section 3. The geometry of the stopping sets is obtained in Section 4, in the $(x,y)$-plane, and in Section 5, in the $(z,y)$-plane (parabolic formulation). Hitting times to those sets are used in Section 6 to prove higher regularity of the value, e.g.~its global $C^1$ regularity,  and in Section 7 we obtain sufficient conditions for the existence of a saddle point. Finally, in Section 8 we collect some concluding remarks and discuss possible directions for further research. A few technical results are given in Appendix.

\section{Dynamics of the underlying asset}\label{sec:dyn}

We begin by considering the probability space $\Omega^0=C([0 +\infty), \RR) \times \{0,1\}$ endowed with a sigma algebra $\CF^0$ and the product probability $\PP^0_y=\WW \otimes \pi(y)$ where $\WW$ is the standard Wiener measure and $\pi(y)=(1-y,y)$. Let us denote $((B_t)_{t\geq0},D)$ a canonical element of $\Omega^0$ and let $r>0$, $\delta_0 > 0$ and $\sigma >0$ be fixed. Then the asset's value (with uncertain return rate) which is described by \eqref{BS} has an explicit expression in terms of the couple $(B,D)$, i.e. 
\begin{equation}\label{defiX}
S^x_t = x e^{\sigma B_t + (r-\delta_0 D)t - \frac{t \sigma^2}{2}}
\end{equation}
The process $S^x$ is a geometric Brownian motion whose drift parameter depends on the unobservable random variable $D$. We recall that the latter is independent of the Brownian motion $B$. As discussed in the introduction a technical difficulty arising in our model is the lack of uniform integrability of the process $S^x$.
\begin{rem}\label{notUI}
If $y \in(0,1)$, the process $e^{-rt}S^x_t$ is not uniformly integrable because
\begin{align*}
\lim_{t\to+\infty}e^{-rt}S^x_t = \lim_{t\to+\infty}x e^{\sigma B_t - \frac{t \sigma^2}{2}}\left(\indic_{\{D=0\}} + \indic_{\{D=1\}} e^{-\delta_0 t}\right)=0, \:\:\:\PP^0_y-\text{a.s.,} 
\end{align*} 
whereas 
\begin{align*}
\lim_{t\to+\infty}\EE^0_y[ e^{-rt}S_t^x]=(1-y)x.
\end{align*}
Hence, by linearity of the payoffs $G_i$, $i=1,2$ in \eqref{G1G2} we obtain \eqref{uniformbound}.
\end{rem}

We aim at giving a rigorous formulation for the game call option \eqref{gameformulationone}. One way to do it is to replace $\PP$ and $(S_t)_{t\ge 0}$ in \eqref{gameformulationone} by $\PP^0_y$ and $(S^x_t)_{t\ge 0}$ defined above. Let $\CF^{S}:=(\CF^{S})_{t\ge 0}$ be the filtration generated by $S^x$ and $\overline{\CF}^{S}:=(\overline{\CF}^{S})_{t\ge 0}$ be its augmentation with $\PP^0_y$-null sets. Then we denote $\CT^S$ the set of $\overline{\CF}^{S}$-stopping times and the optimisation is taken over stopping times $(\tau,\gamma)\in\CT^S$. 
The disadvantage of this formulation is that the dynamics of $S^x$ is not Markovian and therefore for the solution of the problem we cannot rely upon free boundary methods. To overcome this difficulty we want to reduce our problem to a Markovian framework by using filtering techniques.\\ 

According to \cite[Thm.~2.35, p.~40]{BainCrisan} the filtration $\overline{\CF}^{S}:=(\overline{\CF}^{S})_{t\ge 0}$  is right continuous and therefore satisfies the usual assumptions. Thus, we define the process $(D^y_t)_{t\geq 0}$ as an $\CF^{S}$-c\`{a}dl\`{a}g version of the martingale $(\EE^0_y[D  | \overline{\CF}^{S}_{t}])_{t\geq 0}$. Notice that $(D^y_t)_{t \geq 0}$ is a bounded martingale that converges almost surely to $D$. The latter is $\overline{\CF}^S_{\infty}$-measurable because 
\[ 
\frac{1}{t}\ln(S_t^x)=\frac{1}{t}\left[\ln(x)+ (\sigma B_t - \frac{t \sigma^2}{2})-\delta_0 t\indic_{\{D=1\}}\right] \underset{t \rightarrow \infty}{\longrightarrow} \frac{\sigma^2}{2}-\delta_0\indic_{\{D=1\}}. 
\]

According to Chapter 9 in Liptser-Shiryaev \cite{LipsterShiryaev} (see also Chapter 4.2 in Shiryaev \cite{Shiryaev}), the process $(S^x,D^y)$ is the unique strong solution to the following SDE,
\begin{align}\label{SDE1} 
\left\{ 
\begin{array}{l} 
dS^x_t  =  (r-\delta_0 D^y_t) S^x_t dt + \sigma S^x_t d\widehat{W}_t \\ [+4pt]
dD^y_t  =  -\frac{\delta_0}{\sigma} D^y_t (1-D^y_t) d \widehat W_t 
\end{array} 
\right.
\end{align}
where 
\begin{align*}
\widehat {W}_t=\frac{1}{\sigma}\left(\int_0^t (S^x_u)^{-1} dS^x_u- \int_0^t (r-\delta_0 D^y_u)du\right)
\end{align*}
is an $\overline{\CF}^{S}$-adapted Brownian motion under $\PP^0_y$. The couple $(S^x,D^y)$ is therefore adapted to the augmentation of the filtration generated by $\widehat W$, which we denote by $\overline{\CF}^{\widehat W}$. This implies in particular $\overline{\CF}^{S}\subseteq \overline{\CF}^{\widehat W}$ and $\overline{\CF}^{\widehat W}=\overline{\CF}^S$ because $\widehat W$ is $\overline{\CF}^{S}$-adapted. Notice also that the process $(D^y_t)_{t \geq 0}$ is adapted to the filtration $\overline{\CF}^S$ by construction, so that it is no surprise that the new Brownian motion $\widehat W$ is also adapted to $\overline{\CF}^{S}$.

Above we have obtained $(S^x,D^y)$ on the space $(\Omega^0,\CF^0,\PP^0_y)$ which depends on the probability distribution of the random variable $D$. We prefer to get rid of such dependence and consider another process $(X_t,Y_t)_{t\ge0}$, having the same law than $(S^x_t,D^y_t)_{t\ge0}$, but defined below on a new probability space.

Take a probability space $(\Omega,\CF,\PP)$, denote by $W:=(W_t)_{t\ge0}$ a Brownian motion on this space and by $\mathbb{F}:=(\CF_t)_{t\ge 0}$ the augmentation of the filtration that it generates. For $(x,y)\in\RR_+\times(0,1)$, let $(X,Y)$ be the unique strong solution of the bi-dimensional SDE
\begin{align}\label{XY}
\left\{ 
\begin{array}{ll} 
dX_t =  (r- \delta_0 Y_t) X_t dt + \sigma X_t dW_t, & X_0=x,  \\ [+4pt]
dY_t = -\frac{\delta_0}{\sigma} Y_t (1-Y_t) dW_t, & Y_0=y. 
\end{array} 
\right. 
\end{align}
To keep track of the initial point we use the notation $(X^{x,y},Y^y)$ and notice that by standard theory $(t,x,y)\mapsto(X^{x,y}_t,Y^y_t)$ is indeed continuous $\PP$-almost surely. Notice also that the second equation is independent of the first one and therefore its solution, $Y^y$, is independent of $x$.

Since the processes 
$\{(\Omega,\CF, \mathbb{F},\PP),(X^{x,y},Y^y)\}$ and $\{(\Omega^0,\CF^0, \overline \CF^{\widehat W},\PP^0_y),(S^x,D^y)\}$
have the same law then the game option is more conveniently formulated using the former since it is Markovian and the probability measure is independent of $y$. This will be done in the next section.

Often in what follows we use the notation $\PP_{x,y}(\,\cdot\,)=\PP(\,\cdot\,|X_0=x,Y_0=y)$ and drop the apex in the couple $(X,Y)$.
Before closing the section we notice that for all $t\ge 0$
\begin{align}\label{def:Xhat}
X^{x,y}_t=x \, \exp\left( \int_0^t (r-\delta_0 Y_s^y-\frac{\sigma^2}{2})\,ds+\sigma W_t\right),\quad\PP-\text{a.s.}
\end{align}
Moreover we recall that since $(\zeta_t)_{t\ge 0}:=(e^{-r t}X_t)_{t\ge0}$ is a continuous super-martingale, with last element $\zeta_\infty:=\lim_{t\to\infty}\zeta_t=0$, the optional sampling theorem guarantees (see \cite[Thm.1.3.22]{KS})
\begin{align}\label{op-sam}
\EE_x\left[e^{-r\rho}X_\rho\vert\CF_\nu\right]\le e^{-r\nu}X_\nu, \quad\PP_x-\text{a.s.}
\end{align}
for all stopping times $\rho\ge\nu$.

\section{The game and its value}

The payoffs $G_i$, $i=1,2$ in \eqref{G1G2} are non-decreasing and $1$-Lipschitz continuous on $\RR_+$ with $0 \leq G_1 < G_2$. It is also clear that 
\begin{align}\label{infty}
\lim_{t \rightarrow \infty} e^{-rt}G_i(X_t^{x,y}) =0,\qquad \PP-a.s. 
\end{align}
for any $(x,y)\in\RR_+\times(0,1)$, due to the first formula in Remark \ref{notUI}. We now recall the formulation of the game expected payoff \eqref{gameformulationone} given in the Introduction and notice that, thanks to the equivalence explained in the previous section, we can rewrite it as 

\begin{equation}\label{gameformulationtwo}
  M_{x,y}(\tau,\gamma) =\EE\left[ e^{-r\tau}G_1(X^{x,y}_{\tau})\indic_{\{\tau \leq \gamma\}}+ e^{-r \gamma}G_2(X^{x,y}_{\gamma})\indic_{\{\gamma < \tau\}} \right].
 \end{equation}
The stopping times $(\tau,\gamma)$ are drawn from the set $\CT$ of $\mathbb F$-stopping times and the dependence of $M_{x,y}(\tau,\gamma)$ on $(x,y)$ is clearly expressed. Thanks to \eqref{infty} on the event $\{\tau\wedge\gamma=+\infty\}$ we simply get a zero payoff for both players.

We recall here that player 1 (the buyer) picks $\tau$ in order to maximise \eqref{gameformulationtwo}, whereas player 2 (the seller) chooses $\gamma$ in order to minimise \eqref{gameformulationtwo}. 
The upper value $\overline V$ and the lower value $\underline V$ of the game are expressed as in \eqref{upVloV}. We spend the rest of this section proving that these functions indeed coincide so that the game has a value $V$.
\vspace{+3pt}

We start by proving some regularity result of $\overline V$ and $\underline V$.
\begin{lem}\label{Lipschitz}  The functions $\overline V$ and $\underline V$ are: 
\begin{itemize}
\item[  (i)] non-decreasing with respect to (w.r.t.)~$x$ and non-increasing w.r.t.~$y$
\item[ (ii)] $1$-Lipschitz w.r.t.~$x$, uniformly w.r.t.~$y\in[0,1]$
\item[(iii)] locally Lipschitz w. r. t. $y$, i.e.~for $f=\underline{V}$ or $f=\overline{V}$ and a given constant $C>0$ we have
\[|f(x,y)-f(x,y')| \leq C(1+|x|) |y-y'|, \quad \forall x>0, \forall y,y'\in [0,1]. \]
\end{itemize}
\end{lem}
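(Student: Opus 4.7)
Each of (i)--(iii) is obtained from a uniform-in-$(\tau,\gamma)$ estimate on $M_{x,y}(\tau,\gamma)$, which transfers to $\overline V,\underline V$ via the elementary inequalities $|\inf f-\inf g|,|\sup f-\sup g|\leq\sup|f-g|$. For (i), the monotonicity in $x$ is immediate from the linear $x$-scaling in \eqref{def:Xhat} and the monotonicity of $G_i$. For the $y$-direction, I apply the standard comparison principle for one-dimensional SDEs to the autonomous equation $dY_t=-\tfrac{\delta_0}{\sigma}Y_t(1-Y_t)dW_t$ (whose diffusion coefficient is Lipschitz on $[0,1]$) to obtain $Y^y_t\leq Y^{y'}_t$ a.s.\ for $y\leq y'$; then \eqref{def:Xhat} gives $X^{x,y}_t\geq X^{x,y'}_t$ a.s., and the monotonicity of $G_i$ reverses the inequality on $M_{x,y}(\tau,\gamma)$.

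For (ii), \eqref{def:Xhat} yields the identity $X^{x,y}_t-X^{x',y}_t=(x-x')X^{1,y}_t$, so by the $1$-Lipschitz property of $G_i$ and the supermartingale inequality \eqref{op-sam},
\[
|M_{x,y}(\tau,\gamma)-M_{x',y}(\tau,\gamma)|\leq|x-x'|\,\EE\big[e^{-r(\tau\wedge\gamma)}X^{1,y}_{\tau\wedge\gamma}\big]\leq|x-x'|,
\]
uniformly in $(\tau,\gamma)$, and this is (ii).

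For (iii), the main technical point, the key observation is that subtracting the two $Y$-equations in \eqref{XY} for initial values $y,y'$ gives the closed linear SDE $d(Y^y-Y^{y'})=-\tfrac{\delta_0}{\sigma}(Y^y-Y^{y'})(1-Y^y-Y^{y'})dW$, whence
\[
Y^y_t-Y^{y'}_t=(y-y')\,E_t,\qquad E_t:=\exp\!\Big(\!\!-\!\!\int_0^t\!\tfrac{\delta_0}{\sigma}(1-Y^y_s-Y^{y'}_s)dW_s-\tfrac{1}{2}\!\!\int_0^t\!\tfrac{\delta_0^2}{\sigma^2}(1-Y^y_s-Y^{y'}_s)^2 ds\Big),
\]
with $E$ a positive true martingale (bounded integrand) and $E_0=1$. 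Combining this with \eqref{def:Xhat} and the inequality $1-e^{-a}\leq a$ for $a\geq 0$, one derives, for $y\leq y'$, the pathwise bound $|X^{x,y}_\rho-X^{x,y'}_\rho|\leq\delta_0\,|y-y'|\,X^{x,y\wedge y'}_\rho\int_0^\rho E_s\,ds$. Since $G_2-G_1=\eps_0$ is constant, the $\eps_0$-term in $M_{x,y}(\tau,\gamma)$ cancels in the difference $M_{x,y}-M_{x,y'}$, so by the $1$-Lipschitz property of $G_1$, with $\rho=\tau\wedge\gamma$,
\[
|M_{x,y}(\tau,\gamma)-M_{x,y'}(\tau,\gamma)|\leq\delta_0\,|y-y'|\,\EE\Big[e^{-r\rho}X^{x,y\wedge y'}_\rho\int_0^\rho E_s\,ds\Big].
\]
Fubini and \eqref{op-sam} applied slice-by-slice on $\{s<\rho\}$ reduce this to a time-integrated control of $\EE[e^{-rs}X^{x,y\wedge y'}_s E_s]$. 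A direct computation combining $M_t:=e^{\sigma W_t-\sigma^2 t/2}$ and $E_t$ into a single stochastic exponential $\mathcal{E}^*_t$ via It\^o's product rule produces the identity
\[
e^{-rs}X^{x,y\wedge y'}_sE_s=x\,\mathcal{E}^*_s\,\exp\Big(\!-\delta_0\!\int_0^s\!(1-Y^{y\vee y'}_u)du\Big),
\]
exhibiting the left-hand side divided by $x$ as a nonnegative supermartingale $N_s$ with $N_0=1$ and an absolutely continuous decreasing Doob--Meyer part.

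The main obstacle is precisely this last step: turning the supermartingale structure of $N$ into an integrable-in-$s$ and uniform-in-$(y,y')\in[0,1]^2$ bound for $\EE[\,N_s\,\indic_{s<\rho}\,]$, since the pointwise estimate $\EE[N_s]\leq 1$ is by itself not integrable on $[0,\infty)$. The plan is to exploit the Doob--Meyer identity $\EE[\int_0^\infty\delta_0(1-Y^{y\vee y'}_s)N_s\,ds]\leq 1$ together with the asymptotics of $Y^{y\vee y'}$ (which converges to a $\{0,1\}$-valued limit, making $1-Y^{y\vee y'}$ bounded away from $0$ on a set of sufficient mass) to produce a bound of order $C(1+x)$. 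The resulting $C(1+x)|y-y'|$ estimate then transfers to $\overline V,\underline V$ via the inf-sup/sup-inf inequalities, completing the proof.
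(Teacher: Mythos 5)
Your treatment of (i) and (ii) is essentially the same as the paper's: the key ingredients (comparison principle for the $Y$-equation, the representation \eqref{def:Xhat}, the supermartingale bound \eqref{op-sam}) are identical; the paper transfers the per-strategy estimate to $\overline V,\underline V$ via $\varepsilon$-optimal stopping times rather than the abstract inequality $|\sup f-\sup g|\leq\sup|f-g|$, but that is a cosmetic difference.

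For (iii) you take a genuinely different route, and I think the route does not close. You reduce, correctly, to bounding $\delta_0|y-y'|\,\EE\big[e^{-r\rho}X^{x,y\wedge y'}_\rho\int_0^\rho E_s\,ds\big]$, and then (after Fubini, conditioning at time $s$, and \eqref{op-sam}) you are left needing a bound, uniform in $(y,y')$, on $\int_0^\infty\EE[N_s\indic_{\{s<\rho\}}]\,ds$, where $N_s=\mathcal E^*_s\exp(-\delta_0\int_0^s(1-Y^{y\vee y'}_u)du)$ is your nonnegative supermartingale with $N_0=1$. The pointwise bound $\EE[N_s]\leq 1$ is of course not time-integrable, and the Doob--Meyer identity $\EE[\int_0^\infty\delta_0(1-Y^{y\vee y'}_s)N_s\,ds]\leq 1$ carries exactly the wrong weight: it vanishes precisely where you need control. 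Concretely, as $y\vee y'\to1$ the process $Y^{y\vee y'}$ stays near $1$ for an arbitrarily long time with high probability, so $\EE[N_s]$ stays close to $1$ on a time window whose length diverges, and $\int_0^\infty\EE[N_s]\,ds$ is not bounded uniformly in $y,y'\in[0,1]$. The informal appeal to ``asymptotics of $Y^{y\vee y'}$'' and to ``a set of sufficient mass'' does not repair this, because on the event $\{D=0\}$ (where $1-Y_\infty=1$) the mass tends to $0$ as $y\vee y'\to 1$. In short: the last step is not a routine technicality but a genuine obstruction, and as outlined the proposal does not establish the claimed uniformity in $y\in[0,1]$.

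The paper's proof of (iii) sidesteps the issue entirely by returning to the original formulation \eqref{gameformulationone} on $(\Omega^0,\PP^0_y)$: conditioning on $D\in\{0,1\}$ and using that, given $D=k$, the law of $S^x=S^{k,x}$ does not depend on $y$, one obtains the exact affine representation
\[
M_{x,y}(\tau,\gamma)=y\,A_x(\tau,\gamma)+(1-y)\,B_x(\tau,\gamma),
\]
with $A_x,B_x$ independent of $y$ and each bounded by $C(1+x)$ via \eqref{op-sam} and the linear growth of $G_1,G_2$. The Lipschitz-in-$y$ bound then follows immediately from $|A_x|+|B_x|\leq C(1+x)$, with a Lipschitz constant that is manifestly uniform in $y$. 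You may wish to adopt this conditioning argument; it is both shorter and avoids any estimate on $\partial Y^y/\partial y$ over an infinite horizon.
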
  

\begin{proof}[Proof of Lemma \ref{Lipschitz}]
Without loss of generality, we only provide full details for $\underline{V}$.
\vspace{+3pt}

[\emph{Proof of (i)}]
Let us first prove monotonicity with respect to $x$.  Fix $y\in(0,1)$ and $x \geq x'$, then for any $\varepsilon>0$, there exist a couple $(\tau_\varepsilon,\gamma_\varepsilon)$ such that 
\begin{align}
M_{x,y}(\tau_\eps,\gamma_\eps)\le\underline V(x,y)+\frac{\eps}{2}\quad\text{and}\quad M_{x',y}(\tau_\eps,\gamma_\eps)\ge \underline V(x',y)-\frac{\eps}{2}. 
\end{align}
Therefore we also have
\begin{align*} 
\underline{V}(x,y)- \underline{V}(x',y) \geq& M_{x,y}(\tau_\varepsilon,\gamma_\varepsilon) - M_{x',y}(\tau_\varepsilon,\gamma_\varepsilon) - \varepsilon\\
=&\EE\left[ e^{-r(\tau_\varepsilon\wedge\gamma_\eps)}((X^{x,y}_{\tau_\varepsilon\wedge\gamma_\eps}-K)^+- (X^{x',y}_{\tau_\varepsilon\wedge\gamma_\eps}-K)^+)\right]-\eps\\
\ge & -\eps
\end{align*}
where the last inequality follows by observing that $X^{x,y}_t \ge X^{x',y}_t$, $\PP$-a.s.~for $t\ge0$ thanks to \eqref{def:Xhat}. Since $\eps$ was arbitrary we have $x\mapsto\underline V(x,y)$ non-decreasing.

To prove monotonicity with respect to $y$ we argue in a similar way. We fix $x\in\RR_+$ and $y \le y'$, and for any $\eps>0$ we can find a couple $(\tau_\varepsilon,\gamma_\varepsilon)$ such that 
\begin{align*} 
\underline V(x,y)-\underline V(x,y')\ge& M_{x,y}(\tau_\varepsilon,\gamma_\varepsilon) - M_{x,y'}(\tau_\varepsilon,\gamma_\varepsilon)-\eps \\
=& \EE\left[ e^{-r(\tau_\varepsilon\wedge\gamma_\eps)}(({X}^{x,y}_{\tau_\varepsilon\wedge\gamma_\eps}-K)^+- (X^{x,y'}_{\tau_\varepsilon\wedge\gamma_\eps}-K)^+)\right]-\eps\\ 
\ge&-\eps.
\end{align*}
For the last inequality this time we have used the comparison principle for SDEs, which guarantees ${Y}^y_t \leq {Y}^{y'}_t$, $\PP$-a.s.~for $t\ge0$, and \eqref{def:Xhat}, which gives ${X}^{x,y}_t \ge {X}^{x,y'}_t$, $\PP$-a.s.~for $t\ge0$. By arbitrariness of $\eps$ we obtain the claim.
\vspace{+3pt}

[\emph{Proof of (ii)}] As above we fix $y\in(0,1)$ and $x\ge x'$ so that $\underline{V}(x,y)- \underline{V}(x',y)\ge 0$. For any $\eps>0$ we can find a couple $(\tau_\varepsilon,\gamma_\varepsilon)$ such that 
\begin{align}\label{lipV1} 
0\le \underline{V}(x,y)- \underline{V}(x',y) \leq& M_{x,y}(\tau_\varepsilon,\gamma_\varepsilon) - M_{x',y}(\tau_\varepsilon,\gamma_\varepsilon) +\varepsilon  \nonumber\\
\leq & \EE\left[ e^{-r(\tau_\varepsilon\wedge\gamma_\eps)}|X^{x,y}_{\tau_\varepsilon\wedge\gamma_\eps}- X^{x',y}_{\tau_\varepsilon\wedge\gamma_\eps}|\right] +\varepsilon
\end{align}
where the second inequality uses the Lipschitz property of the call payoff. From \eqref{def:Xhat} we have 
\begin{align*}
e^{-r(\tau_\varepsilon\wedge\gamma_\eps) }|{X}^{x,y}_{\tau_\varepsilon\wedge\gamma_\eps}- {X}^{x',y}_{\tau_\varepsilon\wedge\gamma_\eps}|\le |x-x'| e^{ \sigma W_{\tau_\varepsilon\wedge\gamma_\eps} - \frac{\sigma^2}{2}(\tau_\varepsilon\wedge\gamma_\eps)}.
\end{align*}
Since $\exp(\sigma W_t- \frac{\sigma^2}{2}t)$, $t\ge0$ is a positive supermartingale, 
we deduce that
\begin{align*} 
\EE[e^{-r(\tau_\varepsilon\wedge\gamma_\eps) }|{X}^{x,y}_{\tau_\varepsilon\wedge\gamma_\eps}- {X}^{x',y}_{\tau_\varepsilon\wedge\gamma_\eps}|]\leq |x-x'| 
\end{align*}
and Lipschitz continuity in $x$ follows from \eqref{lipV1} since $\eps>0$ is arbitrary.
\vspace{+3pt}

[\emph{Proof of (iii)}] Now we use the equivalence between the couple $(X^{x,y},Y^y)$ on the space $(\Omega,\CF,\PP)$ and the couple $(S^x,D^y)$ on the space $(\Omega^0,\CF^0,\PP^0_y)$ (see explanation in Sec.~\ref{sec:dyn} and \eqref{gameformulationone} and \eqref{gameformulationtwo}) to write 
\begin{align*}
M_{x,y}(\gamma,\tau)=&\,\EE^0_y\left[ e^{-r\tau}G_1(S^x_{\tau})\indic_{\{\tau \leq \gamma\}} + e^{-r\gamma}G_2(S^x_{\gamma})\indic_{\{ \gamma< \tau\}}\right]\\
=&\,y\,\EE^0_y\left[ e^{-r\tau}G_1(S^x_{\tau})\indic_{\{\tau \leq \gamma\}} + e^{-r\gamma}G_2(S^x_{\gamma})\indic_{\{ \gamma< \tau\}}\big|D=1\right]\\
&+(1-y)\,\EE^0_y\left[ e^{-r\tau}G_1(S^x_{\tau})\indic_{\{\tau \leq \gamma\}} + e^{-r\gamma}G_2(S^x_{\gamma})\indic_{\{ \gamma< \tau\}}\big|D=0\right]
\end{align*}
for any couple $(\tau,\gamma)\in\CT^S$. Set
\begin{align*}
S^{1,x}_t=xe^{\sigma B_t+(r-\delta_0-\frac{\sigma^2}{2})t},\qquad S^{0,x}_t=xe^{\sigma B_t+(r-\frac{\sigma^2}{2})t}
\end{align*}
and notice that conditionally on $D$, the law of $S^x$ is independent of $y$, so denoting $\EE^{W}$ the expectation under the Wiener measure $\WW$ we get 
\begin{align}\label{lipV2}
M_{x,y}(\gamma,\tau)=&\,y\,\EE^W\left[ e^{-r\tau}G_1(S^{1,x}_{\tau})\indic_{\{\tau \leq \gamma\}} + e^{-r\gamma}G_2(S^{1,x}_{\gamma})\indic_{\{ \gamma< \tau\}}\right]\nonumber\\
&+(1-y)\,\EE^W\left[ e^{-r\tau}G_1(S^{0,x}_{\tau})\indic_{\{\tau \leq \gamma\}} + e^{-r\gamma}G_2(S^{0,x}_{\gamma})\indic_{\{ \gamma< \tau\}}\right]
\end{align}

Now we use the above representation of the game payoff as follows. Fix $x\in \RR_+$ and $y\le y'$, then for any $\varepsilon>0$ we find $(\tau_\eps,\gamma_\eps)\in\CT^S$ such that
\begin{align*} 
0\le& \underline{V}(x,y)- \underline{V}(x,y') \\
\leq & M_{x,y}(\gamma_\eps,\tau_\eps) - M_{x,y'}(\gamma_\eps,\tau_\eps) +\varepsilon\\
\le&\,|y-y'|\Big(\EE^W\left[ e^{-r\tau_\eps}G_1(S^{1,x}_{\tau_\eps})\indic_{\{\tau_\eps \leq \gamma_\eps\}} + e^{-r\gamma_\eps}G_2(S^{1,x}_{\gamma_\eps})\indic_{\{ \gamma_\eps< \tau_\eps\}}\right]\\
&\phantom{\,|y-y'|\big(}
+\EE^W\left[ e^{-r\tau_\eps}G_1(S^{0,x}_{\tau_\eps})\indic_{\{\tau_\eps \leq \gamma_\eps\}} + e^{-r\gamma_\eps}G_2(S^{0,x}_{\gamma_\eps})\indic_{\{ \gamma_\eps< \tau_\eps\}}\right]\Big)+\eps.
\end{align*}
For any stopping time $\rho$ and for $k=0,1$ we have $\EE^W[ e^{-r\rho}S^{k,x}_{\rho}] \leq x$ as in \eqref{op-sam}. Moreover $G_1,\, G_2$ have linear growth so that the Lipschitz property of $\underline V(x,\,\cdot\,)$ follows.
\end{proof}

Now we can prove the existence of the value for the game. As explained in the introduction, the main difficulty comes from the fact that we are working with a bi-dimensional stopping game with a lack of uniform integrability on the stopping payoff.

\begin{thm}\label{value}
The game with payoff \eqref{gameformulationtwo} has a value $V(x,y)=\underline V(x,y)=\overline V(x,y)$ for all $(x,y)\in\mathbb R_+\times[0,1]$. Moreover player 2, i.e.~the minimiser (seller), has an optimal strategy 
\begin{align}\label{gamma*}
\gamma_*(x,y)= \inf\{  t\ge 0 \,|\, G_2(X^{x,y}_t) \leq V(X_t^{x,y},Y_t^{x,y})\}
\end{align}
with the convention $\inf \emptyset= +\infty$
and the process  
\[ 
e^{-r(t\wedge \gamma_*)}V(X^{x,y}_{t\wedge \gamma_*},Y^y_{t\wedge \gamma_*}),\quad t\ge 0
\]
is a \textbf{closed} supermartingale.

Finally, if we define 
\begin{align}\label{tau*}
\tau_*(x,y)=\inf \{ t\ge 0 \,|\, V(X^{x,y}_t,Y^y_t) \leq G_1(X^{x,y}_t)\},
\end{align}
with the convention $\inf \emptyset= +\infty$
and the process  
\[
e^{-r(t\wedge\tau_*)}V(X^{x,y}_{t\wedge\tau_*},Y^y_{t\wedge \tau_*}),\quad t\ge 0
\]
 is a (not necessarily closed) submartingale.
\end{thm}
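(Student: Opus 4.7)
My plan has three parts addressing the three claims of the theorem: (i) existence of the value via a Snell--envelope/Mokobodzki argument, bypassing the non-uniform integrability of $e^{-rt}X_t$; (ii) optimality of $\gamma_*$ with the closed-supermartingale property; (iii) the submartingale property at $\tau_*$, with an indication of why closedness is not available in general. The main technical difficulty throughout is handling the non-uniform integrability (Remark~\ref{notUI}); it is evaded in (i)--(ii) thanks to the non-negativity of the payoffs, but reappears as the obstacle in (iii).

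\medskip

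\emph{Existence of the value.} I verify a Mokobodzki-type sandwich condition for the discounted payoff processes $L_t := e^{-rt}G_1(X_t)$ and $U_t := e^{-rt}G_2(X_t) = L_t + \eps_0 e^{-rt}$: the constant process $0$ is a submartingale with $0 \le L$, while $\bar M_t := (1+\eps_0)e^{-rt}X_t$ is a supermartingale (by \eqref{op-sam}) dominating $U$, since $G_2(x) \le x + \eps_0$. Combined with continuity of $L, U$ and the terminal decay \eqref{infty}, this allows one to apply the Lepeltier--Maingueneau framework \cite{lepeltiermaingueneau} (in the continuous-time Markovian adaptation of \cite{ekstrompeskir}) to deduce existence of a common value $V := \underline V = \overline V$. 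Lemma~\ref{Lipschitz} yields continuity of $V$ on $\RR_+ \times [0,1]$, and the strong Markov property of $(X,Y)$ then upgrades the abstract Snell-envelope statement to the pointwise fact that $e^{-rt}V(X_t, Y_t)$ is a supermartingale up to the first hitting time of $\{V = G_2\}$ and a submartingale up to the first hitting time of $\{V = G_1\}$.

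\medskip

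\emph{Optimality of $\gamma_*$ and closedness.} By continuity of $V$ and $G_2$ the set $\{V \ge G_2\} = \{V = G_2\}$ is closed, and hence $\gamma_*$ defined in \eqref{gamma*} is an $\mathbb{F}$-stopping time. Set $Z_t := e^{-r(t \wedge \gamma_*)} V(X_{t \wedge \gamma_*}, Y_{t \wedge \gamma_*})$. On $[0, \gamma_*)$ we have $V < G_2$ strictly, so by step (i) the process $Z$ is a supermartingale up to $\gamma_*$; past $\gamma_*$ it is constant and therefore still a supermartingale. Closedness comes from a sign argument: every non-negative supermartingale $Z \ge 0$ converges a.s.\ (Doob) and Fatou for conditional expectations yields $\EE[Z_\infty \vert \CF_t] \le Z_t$, i.e.\ $Z$ is closed by $Z_\infty$. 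For any $\tau \in \CT$, optional sampling of the closed supermartingale $Z$ at $\tau \wedge \gamma_*$, together with the pointwise inequalities $V(X_\tau, Y_\tau) \ge G_1(X_\tau)$ on $\{\tau < \gamma_*\}$ and $V(X_{\gamma_*}, Y_{\gamma_*}) = G_2(X_{\gamma_*})$ on $\{\gamma_* \le \tau\}$ (both by continuity), gives $M_{x,y}(\tau, \gamma_*) \le \EE[Z_{\tau \wedge \gamma_*}] \le Z_0 = V(x,y)$. Since $V = \overline V = \inf_\gamma \sup_\tau M_{x,y}(\tau, \gamma)$, this identifies $\gamma_*$ as optimal for player~2.

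\medskip

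\emph{Submartingale at $\tau_*$.} By continuity $\{V \le G_1\} = \{V = G_1\}$ is closed, so $\tau_*$ is an $\mathbb{F}$-stopping time, and the Snell-envelope property from step (i) gives that $e^{-r(t \wedge \tau_*)} V(X_{t \wedge \tau_*}, Y_{t \wedge \tau_*})$ is a submartingale (stopped up to $\tau_*$ and constant thereafter). The subtle point here---and the main obstacle---is why closedness can fail, in contrast to step (ii): the sign trick (non-negative supermartingales are automatically closable via Fatou) has no analogue for non-negative submartingales. Concretely, on $\{\tau_* = \infty\}$ we still have $V(X_t, Y_t) \to 0$ $\PP$-a.s.\ by $V \le G_2$ and \eqref{infty}, but Remark~\ref{notUI} rules out the corresponding $L^1$ convergence, so the submartingale may remain bounded away from $0$ in expectation as $t \to \infty$. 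No candidate $Z_\infty$ satisfying $\EE[Z_\infty \vert \CF_t] \ge Z_t$ exists in general, which is why the theorem only claims the (non-closed) submartingale property at $\tau_*$.
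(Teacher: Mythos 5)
Your step (i) has a genuine gap. You invoke the Lepeltier--Maingueneau / Ekstr\"om--Peskir framework as if a Mokobodzki-type sandwich (a dominating supermartingale and a dominated submartingale) together with terminal decay were sufficient. It is not: those results additionally require the payoff processes to be of class (D), i.e.~$\EE[\sup_t e^{-rt}G_i(X_t)]<\infty$, and this is \emph{precisely} the hypothesis that fails here, cf.~\eqref{uniformbound} and Remark~\ref{notUI}. The paper flags this explicitly in the introduction as the main technical obstacle and stresses that, in the absence of uniform integrability, existence of the value was previously known only for one-dimensional diffusions (\cite{ekstromvilleneuve}), whereas the present game is genuinely two-dimensional. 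The paper's proof circumvents this by a truncation/approximation argument: replace $G_i$ by $G_i^{(n)}(x)=G_i(x\wedge n)$, apply \cite[Thm.~2.1]{ekstrompeskir} to each bounded game to obtain a value $V^{(n)}$ and a Nash pair $(\tau_n,\gamma_n)$, then pass to the limit as $n\to\infty$ using \eqref{op-sam}, dominated convergence and Fatou to show $V^\infty:=\lim_n V^{(n)}$ coincides with both $\underline V$ and $\overline V$; the (super/sub)martingale properties and optimality of $\gamma_*$ are extracted along the same limit. This approximation step is the essential idea missing from your plan, and without it the appeal to the standard theorems is not justified. There is also a small slip in the bound you propose: $(1+\eps_0)e^{-rt}X_t$ does not dominate $e^{-rt}G_2(X_t)$ when $X_t<1$; the correct majorant is $e^{-rt}(X_t+\eps_0)$, which is a supermartingale as the sum of two supermartingales.

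Your parts (ii) and (iii), on the other hand, do track the paper's reasoning once (i) is in hand. The closedness of the supermartingale stopped at $\gamma_*$ via the Fatou argument for non-negative supermartingales is exactly the argument in the appendix, and your explanation of why the analogous conclusion fails on the submartingale side (no Fatou for non-negative submartingales, and Remark~\ref{notUI} blocks $L^1$-convergence on $\{\tau_*=\infty\}$) correctly captures why the theorem does not claim closedness at $\tau_*$. But these parts presuppose that the supermartingale/submartingale properties of $e^{-rt}V(X_t,Y_t)$ up to $\gamma_*$, $\tau_*$ have already been established; in the paper this is itself a nontrivial output of the approximation scheme (monotone convergence of the $V^{(n)}$, convergence of the stopping boundaries $b_1^{(n)}\uparrow b_1$, and $\tau_n\uparrow\tau_*$), not an a priori Snell-envelope fact, so step (i) cannot simply be asserted and must be fixed first.
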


\Proof The proof of Theorem \ref{value} is postponed to the Appendix.

\begin{rems}\label{strictpositive}
According to Lemma \ref{Lipschitz}, the value function is non-increasing with respect to $y$. Therefore for each $y\in(0,1)$, we have 
\begin{align*}
V(x,y)\ge \lim_{z \to 1}V(x,z) \egal V_1(x),
\end{align*}
where $V_1$ is the game value when $\PP(D=1)=1$. According to \cite{yam}, Theorem 2.1, the value function $V_1$ is strictly positive therefore $V$ is also strictly positive.
\end{rems}
 
\section{Properties of the stopping regions}\label{sec:regions}

Having established that the game has a value $V$ we can introduce the so-called continuation region
\begin{align}\label{def:C}
\CC:=\{(x,y)\in\RR_+\times[0,1]\,\tq\,G_1(x)<V(x,y)<G_2(x)\}
\end{align}
and the stopping regions for the two players, i.e.
\begin{equation} \label{region1}
\CS_1=\left\{ (x,y) \in \RR_+\times [0,1]\, \tq\, V(x,y)=G_1(x) \right\},
\end{equation}
for player 1, and
\begin{equation} \label{region2}
\CS_2=\left\{(x,y) \in \RR_+\times [0,1] \,\tq\, V(x,y)=G_2(x) \right\},
\end{equation}  
for player 2. It is clear that $\CC$ is open and $\CS_1$, $\CS_2$ are closed, because $V$ is jointly continuous (see Lemma \ref{Lipschitz}), and obviously $\CS_1\cap\CS_2=\emptyset$. 

These sets are important because, according to theory on zero-sum Dynkin games, the only candidate to be a Nash equilibrium is the pair $(\gamma_*,\tau_*)$ given by \eqref{gamma*} and \eqref{tau*} (see \cite{peskir}). Under complete information the perpetual game call option has been studied in \cite{ekstromvilleneuve} for $y=0$ and in \cite{yam} for $y=1$.
Those papers analyse the geometry of the continuation and stopping regions and for completeness we account for a summary of their results in appendix.  For future reference here we only note that \cite[Sec.~5.1]{ekstromvilleneuve} obtain 
\begin{align}\label{y=0}
\eps_0<K\Leftrightarrow \CS_2\cap\{y=0\}=[K,+\infty).
\end{align}

In the rest of this section we study the shape of the stopping regions. For that we need to introduce the infinitesimal generator of the two-dimensional diffusion $(X,Y)$, i.e.~for any $g\in C^2(\RR_+\times[0,1])$
\begin{align}\label{generateur}
(\CL g)(x,y):=& \Big[(r-\delta_0 y)x \frac{\partial g}{\partial x} + \frac{1}{2} \sigma^2 x^2 \frac{\partial^2 g}{\partial x^2}\\
& \phantom{\Big[}+ \frac{\delta^2_0}{2\sigma^2}y^2(1-y)^2\frac{\partial^2 g}{\partial y^2}-  \delta_0 xy(1-y)\frac{\partial^2 g}{\partial x \partial y}\Big](x,y). \nonumber
\end{align}
 
Let us also introduce the sets 
\begin{align}
A_1&:=\{(x,y)\in (K,+\infty)\times[0,1]\tq (\CL G_1-r G_1)(x,y)>0\}\\
\label{def:A2} A_2&:=\{(x,y)\in (K,+\infty)\times[0,1]\tq (\CL G_2-r G_2)(x,y)<0\}
\end{align}
and notice that indeed $A_1=\{(x,y)\tq xy<rK/\delta_0\:\text{and}\:x > K\}$ and $A_2=\{(x,y)\tq xy>r(K-\varepsilon_0)/\delta_0\:\text{and}\:x > K\}$. We denote the complements of these sets by $A^c_i$, $i=1,2$ and define $\{x>K\}:=(K,+\infty)\times[0,1]$.
\begin{pro}\label{inclusionregions}
We have,
\begin{align*}
\CS_1 \subseteq A^c_1\cap\{x>K\}\quad\text{and}\quad \CS_2\cap \{x>K\} \subseteq A^c_2.
\end{align*}
\end{pro}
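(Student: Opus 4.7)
The plan is to argue each inclusion by contradiction, using It\^o's formula on a small neighbourhood where $G_i$ is a strict sub/super-solution of the operator $\CL-r$. The $\{x>K\}$ half of the first inclusion comes for free: by Remarks~\ref{strictpositive} we have $V(x,y)>0$ on $\RR_+\times(0,1)$, while $G_1(x)=0$ on $\{x\le K\}$, so $V=G_1$ forces $x>K$. For the $A_1^c$ half I would suppose by contradiction that $(x_0,y_0)\in\CS_1\cap A_1$ with $y_0\in(0,1)$ (the boundary cases $y_0\in\{0,1\}$ reduce to one-dimensional arguments along the lines of \cite{ekstromvilleneuve,yam}). Since $A_1$ is open and sits inside $\{x>K\}$, I can pick an open ball $B\ni(x_0,y_0)$ with $\overline B\subset A_1\cap(\RR_+\times(0,1))$ and a constant $c>0$ such that $\CL G_1-rG_1\ge c$ on $\overline B$; on $B$ the payoff $G_1$ is linear (in particular $C^2$), $G_2-G_1\equiv\eps_0$, and $\tau_B:=\inf\{t\ge0:(X_t,Y_t)\notin B\}>0$ $\PP_{x_0,y_0}$-a.s.

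Next, I test player 1's strategy $\tau=\tau_B$ against an arbitrary $\gamma\in\CT$. Splitting the payoff along $\{\gamma<\tau_B\}$ and applying It\^o's formula to $e^{-rs}G_1(X_s)$ up to $\tau_B\wedge\gamma$ (legal because $G_1\in C^2$ on $B$ and all integrands are bounded on $\overline B$) gives
\begin{align*}
M_{x_0,y_0}(\tau_B,\gamma)=\EE\!\left[e^{-r(\tau_B\wedge\gamma)}G_1(X_{\tau_B\wedge\gamma})\right]+\eps_0\,\EE\!\left[e^{-r\gamma}\indic_{\{\gamma<\tau_B\}}\right]\ge G_1(x_0)+c\alpha+\eps_0\beta,
\end{align*}
with $\alpha:=\EE[\int_0^{\tau_B\wedge\gamma}e^{-rs}\,ds]$ and $\beta:=\EE[e^{-r\gamma}\indic_{\{\gamma<\tau_B\}}]$. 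The elementary identity
\begin{align*}
\alpha+\tfrac{\beta}{r}=\tfrac{1}{r}\EE\!\left[(1-e^{-r\tau_B})\indic_{\{\tau_B\le\gamma\}}+\indic_{\{\gamma<\tau_B\}}\right]\ge \tfrac{1}{r}\EE[1-e^{-r\tau_B}]>0
\end{align*}
then implies $c\alpha+\eps_0\beta\ge\min(c,r\eps_0)\cdot\tfrac{1}{r}\EE[1-e^{-r\tau_B}]=:\kappa>0$, with $\kappa$ independent of $\gamma$. Taking the infimum over $\gamma$ gives $V(x_0,y_0)=\underline V(x_0,y_0)\ge G_1(x_0)+\kappa>G_1(x_0)$, contradicting $(x_0,y_0)\in\CS_1$.

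The second inclusion is symmetric. Assuming $(x_0,y_0)\in\CS_2\cap A_2$ with $x_0>K$ and $y_0\in(0,1)$, I pick a ball $B\subset A_2\cap(\RR_+\times(0,1))$ on which $-(\CL G_2-rG_2)\ge c>0$, let $\gamma_B:=\inf\{t\ge0:(X_t,Y_t)\notin B\}$, and test player 2's strategy $\gamma=\gamma_B$. The analogous decomposition
\begin{align*}
M_{x_0,y_0}(\tau,\gamma_B)=\EE\!\left[e^{-r(\tau\wedge\gamma_B)}G_2(X_{\tau\wedge\gamma_B})\right]-\eps_0\,\EE\!\left[e^{-r\tau}\indic_{\{\tau\le\gamma_B\}}\right]
\end{align*}
combined with It\^o on $e^{-rs}G_2(X_s)$ and the mirror-image lower bound produces a uniform upper gap $M_{x_0,y_0}(\tau,\gamma_B)\le G_2(x_0)-\kappa'$ for some $\kappa'>0$; thus $V(x_0,y_0)=\overline V(x_0,y_0)\le G_2(x_0)-\kappa'<G_2(x_0)$, contradicting $(x_0,y_0)\in\CS_2$.

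The main obstacle is making the strict improvement uniform in the opponent's stopping time: a naive It\^o estimate delivers only a weak $\ge G_1(x_0)$ (resp.\ $\le G_2(x_0)$), losing all control if the opponent chooses to stop instantly. The identity for $\alpha+\beta/r$ is the crucial trick: it combines the strict sign of $\CL G_i-rG_i$ on $A_i$ with the positive payoff gap $\eps_0=G_2-G_1$ into a single uniform bound, and it is precisely here that both the sub/super-solution property of $G_i$ and the cancellation penalty $\eps_0>0$ are genuinely used.
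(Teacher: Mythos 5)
Your proof is correct, and it takes a genuinely different route from the paper's. The paper's argument is based on the semi-harmonic characterization of the value from Theorem~\ref{value}: it defines $\rho := \tau_*\wedge\gamma_*\wedge\tau_R$ (with $\tau_R$ the exit time of a neighbourhood $R\subset A_1$), invokes the martingale property of $e^{-r(t\wedge\rho)}V(X_{t\wedge\rho},Y_{t\wedge\rho})$, then uses $V\ge G_1$ and It\^o on $G_1$ to conclude $V(x_0,y_0)>G_1(x_0)$. You instead test an explicit strategy $\tau_B$ (exit time of a small ball $B$) against an \emph{arbitrary} opponent stopping time $\gamma$, decompose the game payoff using $G_2-G_1\equiv\eps_0$ on $\overline B$, apply It\^o only to the smooth payoff $G_1$, and then use the identity $\alpha+\beta/r=\tfrac1r\EE[(1-e^{-r\tau_B})\indic_{\{\tau_B\le\gamma\}}+\indic_{\{\gamma<\tau_B\}}]\ge\tfrac1r\EE[1-e^{-r\tau_B}]>0$ to blend the two sources of strict improvement (the strict subsolution property of $G_1$ on $A_1$ and the penalty $\eps_0>0$) into a single constant $\kappa>0$ independent of $\gamma$. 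This gives $\underline V(x_0,y_0)\ge G_1(x_0)+\kappa$, which together with $V=\underline V$ yields the contradiction; the second inclusion is mirror-image with $\overline V$.

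What each approach buys. The paper's proof is shorter, leaning directly on the martingale characterization. Your proof is more self-contained — it only uses that the game has a value (not the martingale/sub/super-martingale structure of $V$) — and, more importantly, it is immune to a subtle issue in the paper's write-up: the paper's argument needs $\rho>0$ $\PP_{x_0,y_0}$-a.s.\ for the integral term $\EE[\int_0^{t\wedge\rho}e^{-rs}(\CL G_1-rG_1)\,ds]$ to be strictly positive, but if one is trying to exclude $(x_0,y_0)\in\CS_1$ by contradiction then $\tau_*=0$ and hence $\rho=0$, so the strict inequality collapses (this can be repaired, e.g.\ by taking $\rho=\gamma_*\wedge\tau_R$, noting $\CS_1\cap\CS_2=\emptyset$ forces $\gamma_*>0$, and using the super-martingale property rather than the martingale one — but the paper does not do this explicitly). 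Your argument sidesteps this entirely because the positivity in your bound comes from $\tau_B>0$, which holds unconditionally for the exit time of an open set. The one minor point you should keep: the ball $B$ must be chosen with $\overline B\subset A_1\cap(\RR_+\times(0,1))$ so that the process is non-degenerate on $\overline B$ and all quantities involved in the It\^o step are bounded; your parenthetical remark about the boundary cases $y_0\in\{0,1\}$ handles the rest.
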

  
\begin{proof} It is sufficient to prove the first inclusion (i.e. for $\CS_1$) because arguments for the second one (i.e. for $\CS_2$) are analogous. 

Because $V$ is strictly positive (Remark \ref{strictpositive}), it is clear that $\CS_1 \subset \{x >K\}$. Fix $(x_0,y_0)\in\{x>K\}\cap A_1$, then it is possible to find an open neighbourhood $R$ of $(x_0,y_0)$ such that $R\subset \{x>K\}\cap A_1$, i.e.~$(\CL-r) G_1>0$ on $R$. Let $\tau_R$ be the exit time of $(X^{x_0,y_0},Y^{y_0})$ from $R$ and let $\rho:=\tau_*\wedge \gamma_* \wedge \tau_R$, then Theorem \ref{value} guarantees that 
\[e^{-r(t\wedge\rho)}V(X_{t\wedge\rho},Y_{t\wedge\rho})\quad\text{is a $\PP_{x_0,y_0}$-martingale for $t\ge0$.} \]

Using this property and It\^o's formula we obtain
\begin{eqnarray*}
V(x,y)&=& \EE_{x_0,y_0}\left[e^{-r(t\wedge\rho)}V({X}_{t\wedge\rho},{Y}_{t\wedge\rho})\right]\\
&\ge& \EE_{x_0,y_0}\left[e^{-r(t\wedge \rho)}G_1({X}_{t\wedge \rho})\right]\\
&=& G_1(x_0) + \EE_{x_0,y_0}\left[\int_0^{t\wedge \rho }e^{-rs}(\CL G_1-r G_1)({X}_{s},{Y}_s)\,ds\right]> G_1(x_0),
\end{eqnarray*}
which implies $(x_0,y_0)\notin \CS_1$.
\end{proof}

Our next lemma shows that the stopping region $\CS_1$ is up and right-connected while the region $\CS_2$ is down and left-connected on $\{ x>K\}$.
\begin{lem}\label{monotonic}
The following properties hold
\begin{itemize}
\item[  (i)] $(x,y) \in \CS_1 \;\Rightarrow \; (x,y') \in \CS_1$ for  $y' \geq y$.
\item[ (ii)] $(x,y) \in \CS_2 \;\Rightarrow \; (x,y') \in \CS_2$ for  $y' \leq y$.
\item[(iii)] $(x,y) \in \CS_1 \;\Rightarrow \; (x',y) \in \CS_1$ for  $x' \geq x\geq K$.
\item[ (iv)] $(x,y) \in \CS_2 \;\Rightarrow \; (x',y) \in \CS_2$ for  $x \ge x' \geq K$.
\end{itemize}
\end{lem}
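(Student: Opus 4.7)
The plan is to bootstrap off the universal bounds $G_1\le V\le G_2$ together with the regularity of $V$ proved in Lemma~\ref{Lipschitz}. The bound $V\le G_2$ is obtained by letting player~2 choose $\gamma=0$ in the definition of $\overline V$ (so that $\sup_\tau M_{x,y}(\tau,0)\le G_2(x)$), while $V\ge G_1$ follows symmetrically by letting player~1 choose $\tau=0$ in the definition of $\underline V$. Combined with $V=G_i$ on $\CS_i$, these are the only ingredients needed; each of the four statements then reduces to a one-line sandwich.

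Parts~(i) and~(ii) follow directly from monotonicity of $V$ in $y$ (Lemma~\ref{Lipschitz}(i)). For (i), if $(x,y)\in\CS_1$ and $y'\ge y$, then
\[
G_1(x)\le V(x,y')\le V(x,y)=G_1(x),
\]
so $V(x,y')=G_1(x)$, i.e.~$(x,y')\in\CS_1$. The mirror argument, using $V\le G_2$ and the same monotonicity, yields (ii).

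Parts~(iii) and~(iv) rest on the observation that on $\{x\ge K\}$ the payoffs are affine of slope exactly $1$, namely $G_1(x)=x-K$ and $G_2(x)=x-K+\eps_0$, which matches the Lipschitz constant of $V(\,\cdot\,,y)$ from Lemma~\ref{Lipschitz}(ii). Combined with non-decreasingness in $x$ from Lemma~\ref{Lipschitz}(i), this gives
\[
0\le V(x,y)-V(x',y)\le x-x'\quad\text{for all }x\ge x'.
\]
For (iii), with $x'\ge x\ge K$ and $(x,y)\in\CS_1$, the chain
\[
G_1(x')\le V(x',y)\le V(x,y)+(x'-x)=G_1(x)+(x'-x)=G_1(x')
\]
forces equality. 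For (iv), with $x\ge x'\ge K$ and $(x,y)\in\CS_2$, the inequality $V(x',y)\ge V(x,y)-(x-x')=G_2(x)-(x-x')=G_2(x')$ combines with the universal bound $V(x',y)\le G_2(x')$ to give the claim.

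There is no genuine obstacle: all content sits in Lemma~\ref{Lipschitz}. The only point worth highlighting is that the Lipschitz constant of $V(\,\cdot\,,y)$ coincides \emph{exactly} with the slope of both payoffs on $\{x\ge K\}$; a larger constant would leave room between $V(x',y)$ and $G_i(x')$ and the sandwich in (iii)--(iv) would collapse.
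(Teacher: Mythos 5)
Your proof is correct and follows essentially the same route as the paper: parts (i)--(ii) are immediate from monotonicity of $V$ in $y$, and parts (iii)--(iv) combine the $1$-Lipschitz bound and monotonicity in $x$ from Lemma~\ref{Lipschitz} with the fact that $G_1$ and $G_2$ have slope exactly $1$ on $\{x\ge K\}$. The only cosmetic difference is that you explicitly record the universal bounds $G_1\le V\le G_2$ before sandwiching, whereas the paper uses them tacitly.
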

\begin{proof}
The two first properties follow directly from the fact that $y\mapsto V(x,y)$ is non-increasing.  
To prove (iii) let us fix $(x,y)\in \CS_1$ (notice that in particular $x\ge K$). Since $V(x,y)$ is $1$-Lipschitz w.r.t.~$x$ and non-decreasing (see (i)-(ii) in Lemma \ref{Lipschitz}) then for all $x'\geq x$, we have 
\begin{align}\label{right-c}
V(x',y)\leq V(x,y) + (x'-x)=V(x,y) + G_1(x')-G_1(x)=G_1(x'),
\end{align} 
where we have used that $G_1(x')-G_1(x)=x'-x$ for $x \ge x' \geq K$, and that $G_1(x)=V(x,y)$ by assumption. Clearly \eqref{right-c} implies $(x',y) \in \CS_1$ as claimed. Similar arguments give (iv). 
\end{proof}

\begin{lem}\label{rectangle}
For  $x<K$, $V(x,y)<G_2(x)$. Hence $\CS_2\cap [(0,K)\times (0,1)]=\emptyset$.
\end{lem}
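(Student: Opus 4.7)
The plan is to exhibit a single non-trivial strategy for player 2 (the minimiser) that pushes the upper value $\overline V(x,y)$ strictly below $G_2(x)=\eps_0$ whenever $x<K$. Since $V=\overline V$ by Theorem \ref{value}, this will give the strict inequality $V(x,y)<G_2(x)$, and consequently $\CS_2\cap[(0,K)\times(0,1)]=\emptyset$.

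The natural candidate is the first entry time of $X$ into the half-line $[K,+\infty)$,
\[
\gamma_K:=\inf\{t\ge 0\,|\,X^{x,y}_t\ge K\},\qquad\inf\emptyset:=+\infty.
\]
The key observation is that, by path-continuity of $X$ and the assumption $x<K$, on the event $\{\tau\le\gamma_K\}\cap\{\tau<\infty\}$ we have $X^{x,y}_\tau\le K$, so $G_1(X^{x,y}_\tau)=0$; and on $\{\gamma_K<\tau\}\cap\{\gamma_K<\infty\}$ we have $X^{x,y}_{\gamma_K}=K$, so $G_2(X^{x,y}_{\gamma_K})=\eps_0$. Plugging both identities into \eqref{gameformulationtwo} I would get, for every $\tau\in\CT$,
\[
M_{x,y}(\tau,\gamma_K)\;\le\;\eps_0\,\EE\big[e^{-r\gamma_K}\indic_{\{\gamma_K<\infty\}}\big].
\]
Taking the supremum over $\tau\in\CT$ and then $\overline V(x,y)\le\sup_\tau M_{x,y}(\tau,\gamma_K)$ gives an upper bound that is independent of the maximiser's choice.

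The last step is to turn this into a \emph{strict} inequality. Because $x<K$ and the paths of $X$ are continuous, $\gamma_K>0$ $\PP$-a.s.; combined with $r>0$ this forces $e^{-r\gamma_K}<1$ $\PP$-a.s.~on $\{\gamma_K<\infty\}$ and $e^{-r\gamma_K}=0$ on $\{\gamma_K=\infty\}$, so $\EE[e^{-r\gamma_K}\indic_{\{\gamma_K<\infty\}}]<1$. Chaining with the previous display and Theorem \ref{value} yields $V(x,y)=\overline V(x,y)<\eps_0=G_2(x)$, hence $(x,y)\notin \CS_2$. I do not anticipate any serious obstacle here: the argument is essentially a penalty-type bound produced by a well-chosen stopping rule for player 2, and the only quantitative point is the strict inequality $\EE[e^{-r\gamma_K}\indic_{\{\gamma_K<\infty\}}]<1$, which is immediate from $r>0$ and the continuity of $X$ at $t=0$.
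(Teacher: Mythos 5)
Your proof is correct, but it takes a genuinely different route from the paper's. The paper argues analytically: since $G_2\equiv\eps_0$ is constant on $(0,K)\times(0,1)$, one has $\CL G_2-rG_2=-r\eps_0<0$ there, and plugging this into the local sub-martingale property of $e^{-rt}V(X_t,Y_t)$ from Theorem~\ref{value} together with It\^o's formula (and the observation that player 1 never stops in $(0,K)$, so the exit time of a rectangle $R\subset(0,K)\times(0,1)$ is $\le\tau_*$) yields $V(x,y)\le G_2(x)-r\eps_0\,\EE\bigl[\int_0^{t\wedge\rho_R}e^{-rs}\,ds\bigr]<G_2(x)$. Your argument is instead purely probabilistic and strategy-based: you hand player~2 the concrete (suboptimal) stopping rule $\gamma_K$ and bound $M_{x,y}(\tau,\gamma_K)\le\eps_0\,\EE\bigl[e^{-r\gamma_K}\indic_{\{\gamma_K<\infty\}}\bigr]$ uniformly in $\tau$, using only path continuity of $X$ and the definition of $\overline V$ as an $\inf\sup$. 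The strict inequality then follows because $\gamma_K>0$ a.s.\ and $r>0$. Both are sound. Your route avoids the generator/It\^o machinery and is more self-contained---it uses from Theorem~\ref{value} only that $V=\overline V$, not the semi-harmonic characterisation---so it is a slightly more elementary proof of this particular lemma, whereas the paper's version is uniform in style with the generator-based arguments it deploys systematically elsewhere (e.g.\ Proposition~\ref{inclusionregions}).
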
 
\begin{proof}
Notice that $G_2(x)=\eps_0$ for $(x,y)\in(0,K)\times (0,1)$ and therefore $\CL G_2-r G_2 <0$ on $(0,K)\times (0,1)\subset A_2$. Let $R\subset (0,K)\times (0,1)$ be an open set and fix $(x,y)\in R$. Denote $\rho_R:=\inf\{t\ge0\,\tq\,(X_t,Y_t)\notin R\}$ and let $\tau_*$ be defined by \eqref{tau*}. Notice also that $\tau_*\ge \rho_R$, $\PP$-a.s. because player 1 does not stop in $(0,K)$.
 
Then using Theorem \ref{value} and It\^o formula we obtain
\begin{align*}
V(x,y)\le& \EE\left( e^{-r(t \wedge \rho_R)} V(X^{x,y}_{t \wedge \rho_R}, Y^y_{t \wedge \rho_R})\right)\le  \EE\left( e^{-r(t \wedge \rho_R)} G_2(X^{x,y}_{t \wedge \rho_R})\right)\\
&=  G_2(x) -r\eps_0 \EE \left( \int_0^{t \wedge \rho_R} e^{-r s }\, ds \right)< G_2(x).
\end{align*}
\end{proof}

The next Lemma shows that if the penalty for cancellation does not exceed the strike price, i.e. $\eps_0<K$, then the stopping region $\CS_2$ is non-empty and unbounded.
\begin{lem}\label{S2-non-empty}
If $\varepsilon_0< K$ then the set $\CS_2 \cap [M,+\infty)\times(0,1)$ is non-empty for all $M\geq K$.
\end{lem}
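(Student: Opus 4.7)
The plan is to use a submartingale-based lower bound on $V$ and then exhibit a starting point where the bound forces $V=G_2$ exactly.

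First, I identify the region where $G_2$ is a submartingale (up to discounting): set $c:=r(K-\varepsilon_0)/\delta_0$, which is positive since $\varepsilon_0<K$, and
\[
R:=\{(x,y)\in\RR_+\times[0,1]:x>K,\ xy<c\}.
\]
A direct computation using \eqref{generateur} and the formula $G_2(x)=x-K+\varepsilon_0$ on $\{x>K\}$ gives
\[
(\CL G_2-rG_2)(x,y)=r(K-\varepsilon_0)-\delta_0 xy>0 \quad\text{on } R.
\]
Since $G_2$ is smooth for $x>K$, the process $\bigl(e^{-rt}G_2(X_t)\bigr)_{t\ge 0}$ is a submartingale up to the exit time $\rho_R$ of $(X,Y)$ from $R$. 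Moreover, as $c<rK/\delta_0$, Proposition~\ref{inclusionregions} yields $\CS_1\cap R=\emptyset$.

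Second, for any $(x_0,y_0)\in R$ I take the buyer's strategy $\tau=\rho_R$. Decomposing
\[
M_{x_0,y_0}(\rho_R,\gamma)=\EE\bigl[e^{-r\gamma}G_2(X_\gamma)\indic_{\gamma<\rho_R}+e^{-r\rho_R}G_1(X_{\rho_R})\indic_{\rho_R\le\gamma}\bigr],
\]
applying optional sampling to the submartingale $e^{-rt}G_2(X_t)$ stopped at $\gamma\wedge\rho_R$ (its uniform integrability on compact intervals following from \eqref{op-sam} and $G_2(x)\le x+\varepsilon_0$), and using $G_2-G_1=\varepsilon_0$ on $\{x\ge K\}$, I obtain
\[
M_{x_0,y_0}(\rho_R,\gamma)\;\ge\;G_2(x_0)-\varepsilon_0\,\EE_{x_0,y_0}[e^{-r\rho_R}].
\]
Passing to $\inf_\gamma$ gives $V(x_0,y_0)\ge G_2(x_0)-\varepsilon_0\,\EE_{x_0,y_0}[e^{-r\rho_R}]$.

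To conclude, I combine this lower bound with the trivial upper bound $V\le G_2$ (obtained by taking the seller's strategy $\gamma=0$). If $(x_0,y_0)\in R\cap[M,\infty)\times(0,1)$ can be chosen such that $\EE_{x_0,y_0}[e^{-r\rho_R}]=0$, equivalently so that $(X,Y)$ remains in $R$ forever $\PP$-a.s., the sandwich forces $V(x_0,y_0)=G_2(x_0)$ and hence $(x_0,y_0)\in\CS_2\cap[M,\infty)\times(0,1)$, proving the claim.

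The main obstacle is the construction of such an invariant starting point. The two boundaries of $R$ are $\{x=K\}$ (requiring $X$ to drop from $x_0\ge M$ down to $K$) and $\{xy=c\}$ (requiring the product to grow from $x_0y_0$ up to $c$). From \eqref{XY} one derives by It\^o's formula $d(X_tY_t)=X_tY_t\bigl[(r-\delta_0)\,dt+(\sigma-\tfrac{\delta_0}{\sigma}(1-Y_t))\,dW_t\bigr]$, and the dynamics of $X$ itself are geometric Brownian motion with drift $r-\delta_0 Y_t$. Standard scale-function estimates give vanishing hitting probabilities of the two boundaries as $x_0\to\infty$ and $y_0\to 0$ along suitable curves (e.g.~$x_0y_0$ fixed and small). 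The delicate point is to ensure the conjunction of the two non-attainability events at a single interior point; if the direct construction does not yield $\PP_{x_0,y_0}(\rho_R=+\infty)=1$ under all parameters, one replaces it by a limiting argument, extracting a limit point of $\CS_2$ via the continuity of $V$ (Lemma~\ref{Lipschitz}) and the closedness of $\CS_2$, taking care that the limit lies in the open half-plane $\{y>0\}$ rather than on the $x$-axis (where \eqref{y=0} is already known).
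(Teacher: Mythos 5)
Your proposal is built on a lower bound
\[
V(x_0,y_0)\;\ge\;G_2(x_0)-\varepsilon_0\,\EE_{x_0,y_0}\bigl[e^{-r\rho_R}\bigr],
\]
and then hopes to find $(x_0,y_0)$ with $\rho_R=+\infty$ a.s.~to force $V(x_0,y_0)=G_2(x_0)$. This second step is where the proof breaks down, and the gap is not merely technical: there is in fact \emph{no} interior point of $R$ from which $(X,Y)$ stays in $R$ forever. To see this, pass to the $(z,y)$ coordinates of Section 5. In those coordinates $R$ becomes the set $\{y_c(z)<y<y_K(z)\}$, where $y_K$ is as in \eqref{RK} and $y_c(z)$ is the unique root of $yF(z,y)=c$ (using that $y\mapsto yF(z,y)$ is strictly decreasing under Assumption \ref{ass:2}, see \eqref{h}). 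Both curves $y_c$ and $y_K$ are strictly monotone, and both tend to $1$ as $z\to+\infty$ and to $0$ as $z\to-\infty$. Since $Z_t=z_0+kt$ moves monotonically to $\pm\infty$ (we are in the case $k\neq 0$) and $Y_t$ is a bounded martingale converging a.s.~to $\{0,1\}$, the corridor $\bigl(y_c(z_0+kt),\,y_K(z_0+kt)\bigr)$ shrinks to a point while $Y_t$ wanders; hence $\rho_R<\infty$ $\PP_{x_0,y_0}$-a.s.~for every $(x_0,y_0)\in R$. Your own lower bound actually confirms this obstruction: if $\rho_R=+\infty$ a.s.~then the submartingale property would give $\EE[e^{-rt}G_2(X_t)]\ge G_2(x_0)$ for all $t$, whereas $\lim_{t}\EE[e^{-rt}G_2(X_t)]=(1-y_0)x_0$ (Remark \ref{notUI}), so the sandwich would require $(1-y_0)x_0\ge G_2(x_0)$, i.e.~$x_0y_0\le K-\varepsilon_0$, an extra constraint you never impose. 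The ``limiting argument'' you propose as a fallback does not salvage this: the only sequences along which $\EE_{x_n,y_n}[e^{-r\rho_R}]\to 0$ push $(x_n,y_n)$ towards the boundary $\{y=0\}$ (or $x\to\infty$), precisely the set excluded from $(0,1)$, and the sandwich only gives $V-G_2\to 0$, not equality at an accumulation point.

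There is a second, smaller issue in the same step: the passage from the finite-horizon optional sampling bound to $\EE[e^{-r(\gamma\wedge\rho_R)}G_2(X_{\gamma\wedge\rho_R})]\ge G_2(x_0)$ for an \emph{unbounded} $\gamma$ is not justified, because the family $e^{-rt}G_2(X_t)$ is not uniformly integrable (see \eqref{uniformbound}); the ``uniform integrability on compact intervals'' you invoke does not control the tail contribution $\EE[e^{-rt}G_2(X_t)\indic_{\{\gamma\wedge\rho_R>t\}}]$, which need not vanish.

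For contrast, the paper proceeds by contradiction: if $\CS_2\cap[M,\infty)\times(0,1)=\emptyset$ then $\gamma_*\ge\rho_M$, the hitting time of level $M$ by $X$, so the \emph{value} process $e^{-r(t\wedge\rho_M)}V(X_{t\wedge\rho_M},Y_{t\wedge\rho_M})$ is a supermartingale (Theorem \ref{value}). The key inputs are (a) the one-dimensional fact $\CS_2\cap\{y=0\}=[K,\infty)$ from \eqref{y=0}, and (b) the Lipschitz bound $|V(x,y)-V(x,0)|\le C(1+x)y$ from Lemma \ref{Lipschitz}, which together transform the problem into an explicit one-dimensional optimal stopping computation at $y=0$ with Laplace transforms of hitting times. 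This circumvents both difficulties above: no invariant region is needed, and the contradiction produces a strict inequality $V(x,y)>G_2(x)$ for $y$ small. If you want a direct, non-contradiction proof in the spirit of your sketch, you would at minimum need to replace ``$\rho_R=\infty$ a.s.'' by a quantitative estimate showing that $\varepsilon_0\,\EE_{x,y}[e^{-r\rho_R}]$ is eventually dominated by something that forces equality with $G_2$ at an interior point, and that does not appear to be available without already knowing the $y=0$ result and the Lipschitz estimate, i.e.~without essentially reproducing the paper's argument.
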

\begin{proof} 
We argue by contradiction and assume that $\CS_2 \cap [M,+\infty)\times(0,1)$ is empty for some $M\ge K$. Fix $(x,y)\in(M,+\infty)\times(0,1)$ and denote $$\rho_M(x,y)=\inf\{ t\geq 0\,|\, X^{x,y}_t \le M\},$$ then clearly $\gamma_* \geq \rho_M$ almost surely. Theorem \ref{value} therefore implies that 
\begin{align*}
t\mapsto e^{-r (t\wedge \rho_M)}V(X^{x,y}_{t\wedge \rho_M},Y^{x,y}_{t\wedge \rho_M})\quad\text{is a supermartingale.}
\end{align*}
For any stopping time $\tau$ we have
\begin{align}\label{ne1}
V(x,y)\geq \EE[ e^{-r \rho_M}V(M,Y^y_ {\rho_M})\indic_{\{\rho_M <\tau\}}+ e^{-r \tau}G_1(X^{x,y}_\tau)\indic_{\{\tau \leq \rho_M\}}].
\end{align}
Using Lipschitz continuity (Lemma \ref{Lipschitz}) and \eqref{y=0}, we also have
\begin{align*}
V(M,y)\ge V(M,0)-C(1+M)y= G_2(M)-C(1+M)y.
\end{align*}
Plugging the latter into \eqref{ne1} to estimate $V(M,Y^y_ {\rho_M})$, recalling $x>M$ and using that $(e^{-rt}Y^y_t)_{t \ge 0}$ is a positive, bounded, supermartingale we obtain
\begin{align*}
V(x,y)&\geq \EE[ e^{-r \rho_M}G_2(M)\indic_{\{\rho_M <\tau\}}+e^{-r \tau}G_1(X^{x,y}_\tau)\indic_{\{\tau \leq \rho_M\}}] - C(1+x)y. 
\end{align*}

Since $\tau$ was arbitrary we then have $V(x,y)\ge f_M(x,y)-C(1+x)y$ where 
\begin{align*}
f_M(x,y):=\sup_{\tau} \EE[ e^{-r \rho_M}G_2(M)\indic_{\{\rho_M <\tau\}}+e^{-r \tau}G_1(X^{x,y}_\tau)\indic_{\{\tau \leq \rho_M\}}].
\end{align*}
The same arguments as in the proof of Lemma \ref{Lipschitz} allow us to prove that 
\begin{align*}
|f_M(x,y)-f_M(x,y')|\le C(1+x)|y-y'|\quad\text{for all $y,y'\in[0,1]$ and $x\in\RR_+$}. 
\end{align*}
We can now use the above to obtain
\begin{align}\label{ne2}
V(x,y)&\ge f_M(x,y)- C(1+x)y\ge f_M(x,0)-2C(1+x)y.
\end{align}

Next we want to find a lower bound for $f_M(x,0)$. Notice that for $t\ge 0$
\begin{align*}
Y^0_t=0\quad\text{ and }\quad X^{x,0}_t=xe^{\sigma W+(r-\sigma^2/2)t},\:\:\PP-\text{a.s.}
\end{align*}
For $n\ge x$, setting 
\begin{align*}
\varphi(x):=x^{-\frac{2r}{\sigma^2}},
\quad \psi(x):=x \quad \text{and} \quad \tau_n:=\inf\{t\ge0 \tq X^{x,0}\ge n\}
\end{align*}
we can rely on standard formulae for the Laplace transform of $\rho_M$ and $\tau_n$ to obtain 
\begin{align*}
f_M(x,0)\ge& G_2(M)\EE\left[e^{-r\rho_M}\indic_{\{\rho_M<\tau_n\}}\right]+G_1(n)\EE\left[e^{-r\tau_n}\indic_{\{\rho_M\ge\tau_n\}}\right]\\
=&G_2(M)\frac{\psi(x)\varphi(n)-\varphi(x)\psi(n)}{\psi(M)\varphi(n)-\varphi(M)\psi(n)}+G_1(n)\frac{\psi(M)\varphi(x)-\varphi(M)\psi(x)}{\psi(M)\varphi(n)-\varphi(M)\psi(n)}.
\end{align*}
Letting $n\to\infty$ it is easy to check that 
\[  f_M(x,0) \geq x -(K-\varepsilon_0) \left(\frac{M}{x}\right)^{\frac{2r}{\sigma^2}} > G_2(x)\]
where the final inequality uses $x>M$. The latter and \eqref{ne2} imply that $V(x,y)>G_2(x)$ for $y$ sufficiently small, and thus a contradiction.
\end{proof}

Thanks to above lemmas we can define boundaries of the stopping regions as follows
\begin{align}\label{def:b1b2}
b_1(y)&:= \inf \{ x \in [0,+\infty)\, \tq\, V(x,y) = G_1(x)\},\\
b_2(y)&:= \sup \{ x \in [0,+\infty)\, \tq\, V(x,y) = G_2(x)\},
\end{align}
with the usual convention that $\inf\emptyset=+\infty$ and $\sup\emptyset=0$. Notice that $\CS_2\cap(\RR_+\times\{y\})=[K,b_2(y)]$ if $b_2(y)\ge K$ and it is empty otherwise. From Lemma \ref{monotonic} and because the sets $\CS_i$ are closed, we deduce the next corollary
\begin{cor}\label{cor:b}
The functions $b_1$ and $b_2$ are non-increasing on their respective domains and determine the stopping sets as follows:
\[
\CS_1=\{(x,y)\in\RR_+\times[0,1]: x\ge b_1(y)\},\quad \CS_2=\{(x,y)\in\RR_+\times[0,1]: K\le x\le b_2(y)\}.
\]

 Moreover $b_1$ is lower semi-continuous (hence right-continuous) whereas $b_2$ is upper-semi-continuous (hence left-continuous). Finally, thanks to Proposition \ref{inclusionregions} and the definition of $A_1$ and $A_2$ we have $b_1 \ge b_2$ on $[0,1]$.
\end{cor}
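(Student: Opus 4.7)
The plan is to collect each of the four assertions from the structural results already in hand: the monotonicity of the stopping regions from Lemma \ref{monotonic}, the closedness of $\CS_1$ and $\CS_2$ (which follows from the continuity of $V$), and the inclusions into $A_1^c$ and $A_2^c$ from Proposition \ref{inclusionregions}. The monotonicity of $b_1$ and $b_2$ follows directly from Lemma \ref{monotonic}(i)--(ii): the $y$-section of $\CS_1$ only grows as $y$ increases, so its infimum in $x$ can only decrease; symmetrically the $y$-section of $\CS_2$ grows as $y$ decreases, so its supremum in $x$ decreases with $y$.

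For the explicit description of the stopping sets, fix $y$. Lemma \ref{monotonic}(iii), combined with the inclusion $\CS_1 \subseteq \{x \ge K\}$ (since $G_1 = 0 < V$ on $\{x<K\}$ by Remark \ref{strictpositive}), forces the $y$-section of $\CS_1$ to be a right half-line; closedness of $\CS_1$ gives that the endpoint $b_1(y)$ belongs to $\CS_1$ whenever finite, yielding $\CS_1 = \{x\ge b_1(y)\}$. Analogously, Lemma \ref{monotonic}(iv) together with Lemma \ref{rectangle} force the $y$-section of $\CS_2$ to be the closed interval $[K, b_2(y)]$ whenever non-empty.

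The semi-continuity statements I would obtain by a closed-graph argument. Given $y_n\to y$ with $L:=\liminf_n b_1(y_n) <+\infty$, I extract a subsequence along which $b_1(y_n)\to L$; since $(b_1(y_n),y_n)\in\CS_1$ (endpoints are attained by closedness), passing to the limit in the closed set $\CS_1$ gives $(L,y)\in\CS_1$, hence $b_1(y)\le L$, which is lower semi-continuity. Upper semi-continuity of $b_2$ is symmetric; the one bookkeeping point, which deserves attention, is that when $b_2(y_n)>0$ one has $b_2(y_n)\ge K$ by Lemma \ref{rectangle}, so any positive limit $L$ of $b_2(y_n)$ lies in $[K,+\infty)$ and the closed-graph step places it in $\CS_2$, forcing $L\le b_2(y)$. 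Combined with the non-increasing property, LSC of $b_1$ yields right-continuity and USC of $b_2$ yields left-continuity.

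Finally, $b_1\ge b_2$ reads off from Proposition \ref{inclusionregions}: for $y\in(0,1]$ the hyperbolas $xy=rK/\delta_0$ and $xy=r(K-\eps_0)/\delta_0$ separate the two stopping regions on $\{x>K\}$, giving $b_1(y)\ge rK/(\delta_0 y) > r(K-\eps_0)/(\delta_0 y)\ge b_2(y)$ whenever $b_2(y)\ge K$. The remaining cases ($b_2(y)=0$ by convention, or $y=0$ in which case $A_1^c\cap\{x>K\}$ is empty so $b_1(0)=+\infty$) are immediate. There is no serious technical obstacle here; the whole argument is bookkeeping on top of the earlier structural lemmas, with the only minor subtlety being the compatibility of the convention $\sup\emptyset=0$ with the USC proof for $b_2$.
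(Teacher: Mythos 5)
Your proposal is correct and follows the same route the paper takes: monotonicity of the $y$-sections from Lemma \ref{monotonic}, a closed-graph argument using the closedness of $\CS_1,\CS_2$ for semi-continuity (and then the standard fact that a monotone LSC function is right-continuous, resp.\ USC is left-continuous), and the hyperbolas $xy=rK/\delta_0$, $xy=r(K-\eps_0)/\delta_0$ from Proposition \ref{inclusionregions} for $b_1\ge b_2$. The paper simply cites these ingredients without expanding; your write-up supplies the same bookkeeping, including the correct handling of the conventions $\inf\emptyset=+\infty$, $\sup\emptyset=0$ and the empty-at-$y=0$ case.
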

Next we show $b_1$ is a well-defined function on $(0,1)$.
\begin{lem}\label{lem:bfinite}
For all $y\in(0,1)$, $b_1(y) <\infty$.
\end{lem}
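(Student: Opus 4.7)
The plan is to argue by contradiction. Suppose $b_1(y_0) = +\infty$ for some $y_0 \in (0, 1)$. By the monotonicity of $b_1$ (Corollary \ref{cor:b}), this forces $\CS_1 \cap (\RR_+ \times [0, y_0]) = \emptyset$. I would fix an auxiliary level $y_1 \in (0, y_0)$ and consider, for $x_0$ to be chosen large below, the open rectangle
\[
R := \bigl(x_0/2,\, 2x_0\bigr) \times \bigl(y_1/2,\, y_0\bigr).
\]
For $x_0$ sufficiently large, $\overline R \subseteq \{x > K\} \cap \{xy > rK/\delta_0\} \cap A_2$, so Proposition \ref{inclusionregions} yields $\CS_2 \cap \overline R = \emptyset$, while the contradiction hypothesis gives $\CS_1 \cap \overline R \subseteq \CS_1 \cap \{y \le y_0\} = \emptyset$. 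Hence $\overline R \subseteq \CC$; setting $\tau_R := \inf\{t \ge 0 : (X_t, Y_t) \notin R\}$ and starting from $(x_0, y_1) \in R$, both $\tau_*$ and $\gamma_*$ strictly exceed $\tau_R$.

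Combining the sub- and super-martingale properties stated in Theorem \ref{value} gives that $\bigl(e^{-r(t \wedge \tau_* \wedge \gamma_*)} V(X_t, Y_t)\bigr)_{t \ge 0}$ is a martingale. Since $(X, Y)$ remains in the bounded set $\overline R$ up to $\tau_R$, optional sampling yields $V(x_0, y_1) = \EE[e^{-r\tau_R} V(X_{\tau_R}, Y_{\tau_R})]$. Using $V \le G_2 = G_1 + \eps_0$, and applying It\^o's formula to the smooth function $G_1(x) = x - K$ (valid because $X_s > x_0/2 > K$ throughout $[0, \tau_R]$) with the dynamics \eqref{XY}, one obtains
\[
V(x_0, y_1) \le \EE\bigl[e^{-r\tau_R} G_1(X_{\tau_R})\bigr] + \eps_0 = G_1(x_0) - \EE\!\left[\int_0^{\tau_R} e^{-rs}(\delta_0 X_s Y_s - rK)\, ds\right] + \eps_0.
\]
The trivial bound $V(x_0, y_1) \ge G_1(x_0)$ then rearranges to
\begin{equation}\label{key-lem-bfinite}
\EE\!\left[\int_0^{\tau_R} e^{-rs}(\delta_0 X_s Y_s - rK)\, ds\right] \le \eps_0.
\end{equation}

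The concluding step is to produce a lower bound for the left-hand side of \eqref{key-lem-bfinite} that diverges as $x_0 \to \infty$. On $R$ we have $X_s Y_s \ge x_0 y_1/4$, so the integrand is at least $\delta_0 x_0 y_1/4 - rK$, which grows linearly in $x_0$. For the remaining factor I use scaling: by the representation \eqref{def:Xhat}, the law of $(X_s/x_0, Y_s)_{s \ge 0}$ started at $(1, y_1)$ does not depend on $x_0$, and $\tau_R$ is a measurable functional of this rescaled pair. Path continuity together with the fact that $(1, y_1)$ lies strictly inside $(1/2, 2) \times (y_1/2, y_0)$ gives $\PP(\tau_R > \delta) \ge 1/2$ for some $\delta > 0$ depending only on $y_0, y_1$ and the model parameters. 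Hence
\[
\EE\!\left[\int_0^{\tau_R} e^{-rs}(\delta_0 X_s Y_s - rK)\, ds\right] \ge \frac{1 - e^{-r\delta}}{2r}\bigl(\delta_0 x_0 y_1/4 - rK\bigr),
\]
which exceeds $\eps_0$ once $x_0$ is large enough, contradicting \eqref{key-lem-bfinite}. Therefore $b_1(y_0) < +\infty$.

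The main subtlety, which dictates the above choice of $R$, is the a priori possibility that $\CS_1$ approaches the line $\{y = y_0\}$ arbitrarily closely from above as $x \to \infty$. Starting directly at $(x_0, y_0)$ would be problematic because $Y$ has positive diffusion at $y_0$ and could enter $\CS_1$ almost immediately, making $\tau_*$ too small to absorb the scaling argument. Starting instead from $(x_0, y_1)$ with $y_1 < y_0$ and placing the upper edge of $R$ exactly at $y_0$ neatly sidesteps this: the contradiction hypothesis delivers $\CS_1 \cap \{y \le y_0\} = \emptyset$, hence $\CS_1 \cap \overline R = \emptyset$, no matter how large $x_0$ is taken.
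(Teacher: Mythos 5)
Your proof is correct, but it takes a genuinely different route from the paper's. Both arguments proceed by contradiction from $b_1(y_0)=+\infty$ and both ultimately exploit the tension between the linear growth of $G_1$ and the sublinear-in-$x$ behaviour forced by the dividend term, but the mechanisms differ. The paper stops the submartingale $e^{-r(t\wedge\rho_0)}V$ at the single horizontal barrier $\rho_0=\inf\{t: Y_t\ge y_0\}$, bounds $V\le G_2\le x+\eps_0$, and factors out the exponential martingale of $X$ to obtain directly $V(x,y)\le \alpha(t,y)\,x+\eps_0$ with $\alpha(t,y)<1$ (the strict inequality coming from the factor $e^{-\delta_0\int_0^{\rho_0\wedge t}Y_s\,ds}<1$); letting $x\to\infty$ contradicts $V\ge G_1$ at once. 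You instead confine the process to a bounded rectangle, invoke both halves of Theorem \ref{value} to get a genuine martingale inside $\CC$, and apply It\^o to $G_1$ to isolate the integral $\EE[\int_0^{\tau_R}e^{-rs}(\delta_0 X_sY_s-rK)\,ds]\le\eps_0$, which you then blow up using the scale invariance of $(X/x_0,Y)$. Both are sound. The paper's route is more economical — it needs only the submartingale half of Theorem \ref{value}, no It\^o computation, and no scaling lemma. Your route makes visible the same drift quantity $(\CL G_1 - rG_1)(x,y)=\delta_0 xy-rK$ that underlies Proposition \ref{inclusionregions}, which is arguably more transparent about the source of the contradiction; the price is the extra bookkeeping of the rectangle exit time. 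One step worth flagging as essential rather than decorative: the scaling observation that $\tau_R$, as a functional of $(X/x_0,Y)$, has a law independent of $x_0$ is exactly what guarantees $\PP(\tau_R>\delta)\ge 1/2$ with $\delta$ independent of $x_0$; without it, the lower bound on the exit time could in principle degenerate as $x_0\to\infty$ and the argument would collapse.
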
 
\begin{proof}
Arguing by contradiction let us assume that there exists $y_0\in(0,1)$ such that $b_1(y_0)=+\infty$. Then by monotonicity of $b_1$ ((i) and (iii) of Lemma \ref{monotonic}) and lower semi continuity, it holds $b_1(y)=+\infty$ on $[0,y_0]$. 

Denote $\rho_{0}:=\inf\{ t\ge 0\, \tq\, Y_t \ge y_0\}$. We thus have $\rho_{0} \le \tau_*$, $\PP_{x,y}$-a.s.~for any starting point $(x,y)$ with $y\in (0,y_0)$. From now on fix $y\in(0,y_0)$. Theorem \ref{value} guarantees that 
\begin{align*}
t\mapsto e^{-r (t\wedge \rho_{0})}V(X^{x,y}_{t\wedge \rho_{0}},Y^{y}_{t\wedge \rho_{0}})\quad\text{is a submartingale.}
\end{align*}
Therefore, using also that $V\le G_2$, for any $t>0$ we have   
\begin{align*} 
V(x,y)  \leq& \EE\left[ e^{-r(\rho_0 \wedge t)}V(X^{x,y}_{\rho_0 \wedge t}, Y^y_{\rho_0 \wedge t})\right]
\leq \EE\left[e^{-r(\rho_0 \wedge t)} X^{x,y}_{\rho_0 \wedge t} \right] +\varepsilon_0=\alpha(t,y) x +\varepsilon_0 
\end{align*}
with 
\begin{align*}
\alpha(t,y):=\EE\left[ e^{\sigma W_{\rho_0 \wedge t} - \frac{\sigma^2}{2}(\rho_0 \wedge t)}e^{-\delta_0 \int_0^{\rho_0 \wedge t}Y^y_sds }\right]<\EE\left[ e^{\sigma W_{\rho_0 \wedge t} - \frac{\sigma^2}{2}(\rho_0 \wedge t)}\right]=1.
\end{align*}
According to the last two expressions above, for fixed $y\in(0,y_0)$, we get
\begin{align*}
\lim_{x\to\infty}x^{-1}V(x,y)=\alpha(t,y)<1=\lim_{x\to\infty}x^{-1}G_1(x)
\end{align*}
which contradicts $V\ge G_1$.   
\end{proof}

\begin{lem}
If $\varepsilon_0 < K$ then 
\begin{align*}
\displaystyle{\lim_{y  \to 0}} b_2(y)=+\infty,\qquad \lim_{y\rightarrow 0+}\;b_1(y)=+\infty.
\end{align*}
\end{lem}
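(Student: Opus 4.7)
The plan is to derive the limit for $b_2$ as a direct corollary of Lemma~\ref{S2-non-empty} together with the monotonicity of $b_2$ recorded in Corollary~\ref{cor:b}; the statement for $b_1$ then drops out for free from the comparison $b_1 \ge b_2$ in the same corollary. I do not expect any real obstacle here, since all the delicate probabilistic estimates (in particular the boundary-hitting computation at $y=0$, comparing $f_M(x,0)$ to $G_2(x)$) have already been carried out in the proof of Lemma~\ref{S2-non-empty}; the task essentially reduces to repackaging information already at hand.

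For $b_2$ I would pick an arbitrary $M \ge K$ and invoke Lemma~\ref{S2-non-empty} to obtain a point $(x_M, y_M) \in \CS_2$ with $x_M \ge M$ and $y_M \in (0,1)$. By the very definition of $b_2$ this gives $b_2(y_M) \ge x_M \ge M$. Since Corollary~\ref{cor:b} ensures that $b_2$ is non-increasing on $[0,1]$, this lower bound propagates: $b_2(y) \ge b_2(y_M) \ge M$ for every $y \in (0, y_M]$. Consequently $\liminf_{y \to 0^+} b_2(y) \ge M$, and the arbitrariness of $M \ge K$ yields $\lim_{y \to 0^+} b_2(y) = +\infty$.

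For $b_1$ the statement is then immediate: Corollary~\ref{cor:b} provides $b_1(y) \ge b_2(y)$ for every $y \in [0,1]$, and taking the $y \to 0^+$ limit on both sides gives $\liminf_{y \to 0^+} b_1(y) \ge \lim_{y \to 0^+} b_2(y) = +\infty$, as required. The only point to check carefully along the way is the characterisation $\CS_2 \cap (\RR_+ \times \{y\}) = [K, b_2(y)]$ used implicitly in the first step, but this is already recorded in Corollary~\ref{cor:b}.
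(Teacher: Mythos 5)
Your proof is correct and takes essentially the same approach as the paper: the paper's own proof is just a terse version of yours, invoking Lemma~\ref{S2-non-empty} to force $b_2(y)\to+\infty$ as $y\to0$ (implicitly via the monotonicity of $b_2$ from Corollary~\ref{cor:b}, which you spell out), and then deducing the same for $b_1$ from $b_1\ge b_2$.
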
 
\begin{proof}
From Corollary \ref{cor:b} we have $b_1(y)\ge b_2(y)$ for all $y\in(0,1)$ . Since Lemma \ref{S2-non-empty} holds, then it must be $\lim_{y\to0}b_2(y)=+\infty$. The latter also gives $\lim_{y\to0}b_1(y)=+\infty$. 

\end{proof}

From now on, whenever we refer to properties of $\CS_2$ and its boundary, we tacitly assume that $\CS_2\cap(\mathbb{R}_+\times(0,1))\neq \emptyset$. We recall that indeed this is always true for $\eps_0<K$, thanks to Lemma \ref{S2-non-empty}. In this context we also denote
\begin{align}\label{b2K} 
b^K_2:=\sup\{y>0\,\tq\,(K,y)\in\CS_2\},
\end{align}
and notice that Lemma \ref{S2-non-empty} (with $M=K$) implies that the set $\{y>0\,\tq\,(K,y)\in\CS_2\}$ is non-empty.

\section{A parabolic formulation of the problem}

In order to study existence of Nash equilibria and regularity of the value function of the game (beyond continuity) it is useful to introduce a deterministic transformation of the process $(X,Y)$. Such transformation also unveils a parabolic nature of the problem. 

Given $(x,y)\in (0,\infty)\times (0,1)$, let us define $z=\ln(x)+ \frac{\sigma^2}{\delta_0}\ln(\frac{y}{1-y})$ and the process $Z^z$ such that $Z^z_0=z$ and:
\begin{align}\label{transf01}
Z^z_t=\ln(X^{x,y}_t) +  \frac{\sigma^2}{\delta_0} \ln\left(\frac{Y^y_t}{1-Y^y_t}\right).
\end{align}

Then setting 
\begin{align}\label{def:k}
k:=(r - \tfrac{\sigma^2}{2}- \tfrac{\delta_0}{2})
\end{align}
it is not hard to check, by using It\^o's formula, that $Z^z$ evolves according to 
\begin{align}\label{Z}
Z^z_t=z+k t,\qquad t\ge 0.
\end{align}

From \eqref{transf01} we observe that $\PP$-almost surely
\begin{align}
X^{x,y}_t=F(Z^z_t,Y^y_t),\qquad t\ge0
\end{align}
with $F:\RR\times (0,1)\to\RR_+$ defined by
\begin{align}\label{def:F}
F(z,y)= \exp \left(z-\frac{\sigma^2}{\delta_0}\ln\left(\frac{y}{1-y}\right)\right)= e^z \left(\frac{1-y}{y} \right)^{\frac{\sigma^2}{\delta_0}}.
\end{align}
Notice that $F$ is $C^2$ on $\RR\times (0,1)$.
The process $Z$ is indeed deterministic and of bounded variation, hence it plays the role of a ``time'' process. Whether $Z$ is increasing or decreasing depends on the sign of $k$. In the rest of the paper we study the case $k\neq 0$ which is \emph{truly} two-dimensional. We leave aside the case $k=0$ that reduces to a one-dimensional problem parametrised in the variable $z$.

\begin{rem}\label{rem:supp}
In the new coordinates it becomes clear that the law of $(X^{x,y}_t,Y^y_t)$ is supported on the curve $\{(F(z+kt,\zeta),\zeta),\, \zeta\in(0,1)\}$, which is a set of null Lebesgue measure in $\RR_+\times[0,1]$.
\end{rem}

We can now look at our game in the new coordinates and consider the functions $H_1,H_2,v:\RR\times (0,1)\to\RR_+$ given by
\begin{align}\label{vH}
v(z,y):=V(F(z,y),y),\quad H_1(z,y):=G_1(F(z,y)),\quad H_2(z,y):=G_2(F(z,y)).
\end{align}
By construction, we have $H_1 \le v \le H_2$ and $v$ is equal to the value of the stopping game
\begin{align*}
v(z,y)=\sup_{\tau \in \CT}\; \inf_{\gamma \in \CT}\; \EE\left[ e^{-r\tau}H_1(Z^z_\tau , Y^y_{\tau})\indic_{\{\tau \leq \gamma\}}+  e^{-r\gamma} H_2(Z^z_\gamma , Y^y_{\gamma})\indic_{\{\gamma < \tau\}}\right]
\end{align*}
For this new parametrization of the game we naturally introduce the continuation and stopping regions
\begin{align*}
\CC':=&\{(z,y)\in \RR\times (0,1)\,\tq\,H_1(z,y)<v(z,y)<H_2(z,y)\}\,\\
\CS'_1:=&\{ (z,y)\in \RR\times (0,1)\,\tq\, v(z,y)=H_1(z,y) \},\\
\CS'_2:=&\{ (z,y)\in \RR\times (0,1)\,\tq\, v(z,y)=H_2(z,y) \}.
\end{align*}
Using Lemma \ref{Lipschitz} it is immediate to verify that $v$ is locally Lipschitz continuous in $\RR\times(0,1)$ so that $\CC'$ is open and $\CS'_i$, $i=1,2$ are closed. Moreover, it is clear that $\gamma_*$ and $\tau_*$ as in \eqref{gamma*}-\eqref{tau*} are the entry times of $(Z,Y)$ into $\CS_2'$ and $\CS_1'$, respectively.

The infinitesimal generator associated with $(Z,Y)$ is defined by
\begin{align}
(\CG\,f)(z,y):=k\frac{\partial\, f}{\partial\,z }(z,y)+\frac{1}{2}\left(\frac{\delta_0}{\sigma}\right)^2y^2(1-y)^2\frac{\partial^2 f}{\partial \,y^2}(z,y),
\end{align}
for $f\in C^{1,2}(\RR\times[0,1])$.
One advantage of this formulation is that $\CG$ is a parabolic operator and it is non-degenerate on $\RR\times(0,1)$, so that the associated Cauchy-Dirichlet problems admit classical solutions under standard assumptions on the boundary conditions.

Since $v$ is continuous then $\{e^{-rt}v(Z_t,Y_t),\,t\le\tau'_{\CC}\}$ is a continuous martingale for $\tau'_{\CC}:=\inf\{t\ge 0\,\tq\,(Z_t,Y_t)\notin\CC'\}$ (the latter follows from Theorem \ref{value} and the fact that $(X,Y)$ is linked to $(Z,Y)$ by a deterministic map). We can use results of interior regularity for solutions to parabolic PDEs (see, e.g.,~\cite[Corollary 2.4.3]{krylov}) and It\^o's formula to deduce that any solution $f$ to
$
(\CG f-r f)(z,y)=0 \hbox{ on } R_\eta=(z_0,z_0+\eta)\times (y_0-\eta,y_0+\eta) \subset \CC'
$
with $f=v$ on $\partial R_\eta$ is $C^{\infty}(R_\eta)$ and coincides with $v$. Therefore,
 $v\in C^{\infty}(\CC')$ and thus it satisfies
\begin{align}
\label{freeb1}&(\CG v-r v)(z,y)=0,\qquad\text{for $(z,y)\in \CC'$},\\[+4pt]
\label{freeb2}&H_1\le v\le H_2, \qquad\qquad\text{on $\RR\times(0,1)$,}\\[+4pt]
\label{freeb3}&v|_{\partial \CS'_1}=H_1|_{\partial \CS'_1}\quad\text{and}\quad v|_{\partial \CS'_2}=H_2|_{\partial \CS'_2}.
\end{align}
\noindent As a consequence, $V \in C^{\infty}(\CC)$ as well.

We denote by $R_K$ the closure in $\RR\times (0,1)$ of the set in which $H_1>0$, i.e.
\begin{align}\label{RK}
R_K=&\{ (z,y) \in \RR\times (0,1)\,\tq\, F(z,y) \ge K\}
=\{(z,y)\in\RR\times(0,1)\,\tq\,y\le y_K(z)\}
\end{align}
where $y_K(z):=e^{\delta_0/\sigma^2\,z}/(K^{\delta_0/\sigma^2}+e^{\delta_0/\sigma^2\,z})$. 
According to Proposition \ref{inclusionregions} and Lemma \ref{rectangle}, the stopping regions $\CS'_1$ and $\CS'_2$ lie in $R_K$. Notice that since $y_K$ is increasing, if $(z_0,y_0) \in R_K$ then any pair $(z,y_0)$ belongs to $R_K$ for $z \ge z_0$.
Somewhat in analogy with \eqref{b2K} we also define
\begin{align}\label{zK}
z_K:=\sup\{z\in\RR\,\tq\,(z,y_K(z))\in\CS'_2\}\:\:\text{and}\:\: \overline y_K:=y_K(z_K).
\end{align}

In the new coordinates the sets $\CS'_1$ and $\CS'_2$ are connected with respect to the $z$ variable, as illustrated in the next lemma.
\begin{lem}\label{connectedness}
Let $(z_0,y_0) \in R_K$.
\begin{itemize}
\item[ (i)]  $(z_0,y_0) \in \CS'_2\implies (z,y_0) \in \CS'_2$ for all $z \le z_0$, such that $(z,y_0)\in R_K$,
\item[(ii)]  $(z_0,y_0) \in \CS'_1\implies (z,y_0) \in \CS'_1$ for all $z \geq z_0$.
\end{itemize}
\end{lem}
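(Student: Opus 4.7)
The plan is to reduce both claims to Lemma~\ref{monotonic}(iii)-(iv) via the deterministic change of variables $x=F(z,y)$ introduced in \eqref{def:F}. The key observation is that, for any fixed $y\in(0,1)$, the map $z\mapsto F(z,y)=e^{z}\bigl((1-y)/y\bigr)^{\sigma^2/\delta_0}$ is strictly increasing and continuous, and by \eqref{vH} we have the obvious equivalence
\[
(z,y)\in\CS'_i\;\Longleftrightarrow\;(F(z,y),y)\in\CS_i,\qquad i=1,2.
\]
Hence $z$-monotonicity in the $(z,y)$-plane should follow directly from the $x$-monotonicity of $\CS_1$ and $\CS_2$ already established in Lemma~\ref{monotonic}.

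For (ii), I would fix $(z_0,y_0)\in\CS'_1$ and any $z\ge z_0$. Since $\CS'_1\subset R_K$ (here one uses Proposition~\ref{inclusionregions} together with the definition of $R_K$), we have $F(z_0,y_0)\ge K$, and monotonicity of $F(\cdot,y_0)$ yields $F(z,y_0)\ge F(z_0,y_0)\ge K$. Lemma~\ref{monotonic}(iii) applied with $x=F(z_0,y_0)$ and $x'=F(z,y_0)$ then gives $(F(z,y_0),y_0)\in\CS_1$, i.e.\ $(z,y_0)\in\CS'_1$.

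For (i), I would fix $(z_0,y_0)\in\CS'_2$ and any $z\le z_0$ with $(z,y_0)\in R_K$. By definition of $R_K$ in \eqref{RK} we have $F(z,y_0)\ge K$, and strict monotonicity of $F(\cdot,y_0)$ gives $K\le F(z,y_0)\le F(z_0,y_0)$. Applying Lemma~\ref{monotonic}(iv) with $x=F(z_0,y_0)$ and $x'=F(z,y_0)$ yields $(F(z,y_0),y_0)\in\CS_2$, hence $(z,y_0)\in\CS'_2$.

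There is essentially no obstacle beyond bookkeeping: the change of variables is a $C^\infty$ diffeomorphism in $z$ (at fixed $y$), and the crucial structural fact -- that $\CS_1$ is right-connected and $\CS_2$ is left-connected in $x$ on $\{x\ge K\}$ -- has already been proved in Lemma~\ref{monotonic}. The only point that requires a little care is the restriction $(z,y_0)\in R_K$ in part (i): without it the conclusion would fail, because outside $R_K$ one has $H_2(z,y_0)=\eps_0>0=v(z,y_0)$ is impossible (by Lemma~\ref{rectangle}, $\CS'_2$ cannot extend into $\{F<K\}$), so the statement is intrinsically tied to $R_K$. In part (ii) no such restriction is needed because $\CS'_1\subset R_K$ is automatic and $R_K$ is itself right-connected in $z$ at fixed $y_0$.
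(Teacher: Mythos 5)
Your proof is correct and is essentially the same argument as the paper's, just packaged differently: the paper redoes the $1$-Lipschitz comparison of Lemma \ref{Lipschitz}(ii) inline (which is precisely how Lemma \ref{monotonic}(iii)-(iv) were proved in the first place), while you cite Lemma \ref{monotonic} directly and transport it through the monotone change of variables $z\mapsto F(z,y)$. One small slip in your final remark: the sentence ``$H_2(z,y_0)=\eps_0>0=v(z,y_0)$ is impossible'' is garbled — in $\{F<K\}$ one has $v>0$ (Remark \ref{strictpositive}) and $v<H_2=\eps_0$ (Lemma \ref{rectangle}), which is the correct reason $\CS'_2$ cannot leave $R_K$ — but this does not affect your main argument.
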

\begin{proof}
Using that $z\mapsto F(z,y)$ is increasing for each $y\in(0,1)$ it is not difficult to show (by direct comparison) that $z\to v(z,y)$ is also non-decreasing. To prove $(i)$ take $z\le z_0$, then $(ii)$ of Lemma \ref{Lipschitz} implies 
\begin{align}\label{bWt}
0&\le v(z_0,y_0)-v(z,y_0)=V(F(z_0,y_0),y_0)-V(F(z,y_0),y_0)\\
&\le F(z_0,y_0)-F(z,y_0)=H_2(z_0,y_0)-H_2(z,y_0).\nonumber
\end{align}
If $(z_0,y_0) \in \CS'_2$ then $v(z_0,y_0)-H_2(z_0,y_0)= 0$ yielding $v(z,y_0)-H_2(z,y_0)\ge 0$. With an analogous argument we can prove $(ii)$.
\end{proof}

The stopping sets are not necessarily connected with respect to the $y$ variable and indeed we only have connected sets for some values of the rate $\delta_0$ and volatility $\sigma$ of $X$. In particular in the rest of the paper we make the following standing assumption (unless otherwise specified).
\begin{ass}\label{ass:2}
We assume $\tfrac{\sigma^2}{\delta_0}\ge 1$.
\end{ass}
\noindent For $\tfrac{\sigma^2}{\delta_0}\ge 1$ the sets $\CS'_1$, $\CS'_2$ enjoy the next desired property.
\begin{lem}\label{connectednessagain}
Let $(z_0,y_0) \in R_K$, then 
\begin{itemize}
\item[ (i)] $(z_0,y_0) \in \CS'_2\implies (z_0,y) \in \CS'_2$ for all $y \ge y_0$ such that $(z_0,y)\in R_K$,
\item[(ii)] $(z_0,y_0) \in \CS'_1\implies (z_0,y) \in \CS'_1$ for all $y \le y_0$. 
\end{itemize}
Moreover it also holds
\begin{itemize}
\item[(iii)] $v(z,y')\le v(z,y)$ for $y'\ge y>y_K(z)$, $z\in\RR$.
\end{itemize}
\end{lem}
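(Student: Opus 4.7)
For (iii), I would write $v(z,y)=V(F(z,y),y)$ and simply chain the monotonicities of Lemma \ref{Lipschitz}: since $F(z,\cdot)$ is strictly decreasing on $(0,1)$ and $V$ is non-decreasing in $x$ and non-increasing in $y$, for any $y'\ge y$ we get
\[
v(z,y') = V(F(z,y'),y') \le V(F(z,y),y') \le V(F(z,y),y) = v(z,y).
\]
This bound actually holds on all of $(0,1)$ and does not require Assumption \ref{ass:2}; the restriction $y>y_K(z)$ in the statement just flags the regime in which (iii) will be used.

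For (i) and (ii) I would first record the structural consequence of Assumption \ref{ass:2}. A direct differentiation of
\[
F(z,y)\,y = e^z\, y^{1-\sigma^2/\delta_0}(1-y)^{\sigma^2/\delta_0}
\]
gives a derivative proportional to $-\bigl(\tfrac{\sigma^2}{\delta_0}-1+y\bigr)$, which is strictly negative on $(0,1)$ whenever $\sigma^2/\delta_0\ge 1$. Combining this with Proposition \ref{inclusionregions} yields the half-line containments on each horizontal slice:
\[
\CS'_2\cap\{z=z_0\}\subseteq [y^*_2(z_0),\,y_K(z_0)], \qquad \CS'_1\cap\{z=z_0\}\subseteq (0,\,y^*_1(z_0)],
\]
with $y^*_2(z_0)$ defined by $F(z_0,y)\,y = r(K-\eps_0)/\delta_0$ and $y^*_1(z_0)$ by $F(z_0,y)\,y = rK/\delta_0$. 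To upgrade these inclusions to genuine intervals, for (i) I fix $(z_0,y_0)\in\CS'_2$ and consider
\[
E:=\{y\in[y_0,y_K(z_0)] : (z_0,y)\in \CS'_2\},
\]
which is non-empty (it contains $y_0$) and closed (since $\CS'_2$ is closed). It then suffices to show $E$ is relatively open. Given $\bar y\in E$ and $y$ slightly above $\bar y$, Lemma \ref{monotonic}(iv) applied with $x=F(z_0,\bar y)$ and $x'=F(z_0,y)\in[K,F(z_0,\bar y)]$ delivers $(F(z_0,y),\bar y)\in\CS_2$. The local Lipschitz bound of Lemma \ref{Lipschitz}(iii), the inequality $V\le G_2$, and the sign $(\CG-r)H_2\ge 0$ on $A_2^c$ (guaranteed in a neighbourhood of $(z_0,\bar y)$ by the half-line structure above), together permit a parabolic comparison argument for $\phi := H_2 - v$ on a thin parabolic neighbourhood of $(z_0,\bar y)$ inside $\CC'$, forcing $\phi\equiv 0$ there, i.e.\ $y\in E$. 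The proof of (ii) is the mirror statement using $\CS'_1$, $H_1$, and $A_1^c$, working downward from $y_0$.

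The principal obstacle is the openness step. The monotonicities of Lemma \ref{monotonic} are stated along the $(x,y)$-axes, whereas moving along a $z$-slice in the new coordinates travels along the curve $\eta\mapsto(F(z_0,\eta),\eta)$, which simultaneously shifts $x$ and $y$. Without Assumption \ref{ass:2} the curve could in principle re-enter and leave the stopping sets several times; the role of $\sigma^2/\delta_0\ge 1$ is precisely to make $y\mapsto F(z,y)y$ monotone, producing the half-line structure of $A_1^c$ and $A_2^c$ on each slice and giving the parabolic comparison the correct sign so that the stopping sets cannot leave again once the curve has entered.
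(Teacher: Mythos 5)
Your proof of (iii) is correct and cleanly reduces to Lemma \ref{Lipschitz}; you are also right that the chain of inequalities holds for all $y'\ge y$ in $(0,1)$ without Assumption \ref{ass:2}, and that the restriction $y>y_K(z)$ merely flags where (iii) is later used. Your computation of $\partial_y\bigl(yF(z,y)\bigr)$ and the resulting half-line containments on each $z$-slice (via Proposition \ref{inclusionregions}) are also correct, and this is indeed the structural input that makes Assumption \ref{ass:2} matter.

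The gap is in the openness step of (i)--(ii), and it is not a small one. In $\CC'$ one has $(\CG-r)v=0$, so setting $\phi:=H_2-v$ gives $(\CG-r)\phi=(\CG-r)H_2\ge 0$ on $\CC'\cap A_2^c$. Thus $\phi$ is a nonnegative \emph{subsolution} that vanishes at $(z_0,\bar y)\in\partial\CS'_2$, i.e.\ at a point where it attains its minimum. The strong parabolic maximum (resp.\ minimum) principle propagates a \emph{maximum} of a subsolution, or a \emph{minimum} of a supersolution; it says nothing about a subsolution at its minimum. Probabilistically, $(\CG-r)\phi\ge 0$ makes $e^{-rt}\phi(Z_t,Y_t)$ a local submartingale, so $\phi(z_0,\bar y)\le\EE\bigl[e^{-rt}\phi(Z_{t\wedge\rho},Y_{t\wedge\rho})\bigr]$, which is automatically consistent with $\phi\ge 0$ and $\phi(z_0,\bar y)=0$ and forces nothing. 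So the proposed ``parabolic comparison'' cannot conclude $\phi\equiv 0$ in a neighbourhood, and relative openness of $E$ remains unproved.

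The paper sidesteps the openness issue entirely and instead proves a global monotonicity statement from which (i) and (ii) both follow at once. Fixing $z_0$ and $y\ge y_0$, one plays $\gamma=\gamma_*(z_0,y)$ (optimal for the seller at $(z_0,y)$) against an $\eps$-optimal $\tau$ for the buyer at $(z_0,y_0)$, and bounds $v(z_0,y_0)-v(z_0,y)$ by the difference of the payoffs evaluated with the \emph{same} pair $(\tau,\gamma)$. Since $H_2-H_1=\eps_0$ is a constant, this difference collapses to an $H_1$ difference, which in turn is bounded by the $F$ difference. Applying It\^o's formula to $s\mapsto e^{-rs}F\bigl(Z^{z_0}_s,Y^\zeta_s\bigr)$ produces a residual integral of $Y^\zeta_s F\bigl(Z^{z_0}_s,Y^\zeta_s\bigr)$; this is exactly where the sign of $\partial_y\bigl(yF(z,y)\bigr)\le 0$ (your \eqref{h} computation) combines with the pathwise comparison $Y^y_\cdot\ge Y^{y_0}_\cdot$ to give the integral the right sign. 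The conclusion is that $y\mapsto v(z,y)-F(z,y)$ is non-decreasing on each slice, which immediately yields (i) and (ii) since on $R_K$ one has $v-H_2=(v-F)+K-\eps_0$ and $v-H_1=(v-F)+K$. I would encourage you to replace the closed-open argument by a direct proof of this monotonicity.
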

\begin{proof}
Take $(x_0,y_0)\in R_K$, fix $y\ge y_0$ and let $x_0=F(z_0,y_0)$. Let $\gamma:=\gamma_*(x_0,y)$ be optimal for player 2 in the game started at $(x_0,y)$ and $\tau$ an $\eps$-optimal stopping time for player 1 in the game started at $(x_0,y_0)$. Recall also that on $\{\tau\wedge\gamma=+\infty\}$ both players have zero payoff due to \eqref{infty}. Then using that $H_2(z,y)-H_2(z,y')=H_1(z,y)-H_1(z,y')$ for all $z\in\RR$ and $y,y'\in(0,1)$ we obtain
\begin{align}\label{trans2}
v(z_0,y_0)& - v(z_0,y) \\
\leq& \EE\left[e^{-r\gamma}H_2(Z^{z_0}_{\gamma},Y^{y_0}_{\gamma})\indic_{\{\gamma < \tau\}}+ 
e^{-r \tau}H_1(Z^{z_0}_{\tau},Y^{y_0}_{\tau})\indic_{\{\tau \leq \gamma\}}\right]\nonumber\\
& -\EE\left[e^{-r\gamma}H_2(Z^{z_0}_{\gamma},Y^{y}_{\gamma})\indic_{\{\gamma < \tau\}}+ e^{-r \tau}H_1(Z^{z_0}_{\tau},Y^{y}_{\tau})\indic_{\{\tau \leq \gamma\}}\right]+\eps\nonumber\\
=&\EE\left[e^{-r(\gamma\wedge\tau)}\left(H_1(Z^{z_0}_{\gamma\wedge\tau},Y^{y_0}_{\gamma\wedge\tau})-H_1(Z^{z_0}_{\gamma\wedge\tau},Y^{y}_{\gamma\wedge\tau})\right)\right]+\eps.\nonumber
\end{align}

Now we notice that, since $Y^{y}_t\ge Y^{y_0}_t$, $\PP$-a.s.~for all $t\ge0$ and $y\mapsto F(z,y)$ is decreasing, then  
\begin{align}\label{ine0}
H_1(Z^{z_0}_{\gamma\wedge\tau},Y^{y_0}_{\gamma\wedge\tau})-H_1(Z^{z_0}_{\gamma\wedge\tau},Y^{y}_{\gamma\wedge\tau})\le F(Z^{z_0}_{\gamma\wedge\tau},Y^{y_0}_{\gamma\wedge\tau})-F(Z^{z_0}_{\gamma\wedge\tau},Y^{y}_{\gamma\wedge\tau})
\end{align}
and the right-hand side of the inequality is positive. Therefore we can use Fatou's lemma and \eqref{ine0} to obtain 
\begin{align}\label{ine1}
v(z_0,y_0) - v(z_0,y)\le& \EE\left[e^{-r(\gamma\wedge\tau)}\left(F(Z^{z_0}_{\gamma\wedge\tau}, Y^{y_0}_{\gamma\wedge\tau})- F(Z^{z_0}_{\gamma\wedge\tau}, Y^{y}_{\gamma\wedge\tau})\right)\right]+\eps \nonumber\\
\le&\liminf_{t\to +\infty}\EE\left[e^{-r(\gamma\wedge\tau\wedge t)}\left(F(Z^{z_0}_{\gamma\wedge\tau\wedge t}, Y^{y_0}_{\gamma\wedge\tau\wedge t})- F(Z^{z_0}_{\gamma\wedge\tau\wedge t}, Y^{y}_{\gamma\wedge\tau\wedge t})\right)\right]+\eps
\end{align}
Setting $M_s^\zeta=e^{-rs}F(Z^{z_0}_s,Y^{\zeta}_s)$, for any $\zeta\in(0,1)$ and $s\in[0,t]$, It\^o formula gives
\[ dM^\zeta_s= -\delta_0 Y^\zeta_s M_s^\zeta dt + \sigma M_s^\zeta dW_s .\]
Hence substituting the above into \eqref{ine1} and noticing that $M^\zeta\in L^2([0,t]\times\Omega)$, we can use the optional sampling theorem to obtain
\begin{align}\label{trans3}
v(z_0,y_0) - v(z_0,y)\le&F(z_0,y_0)-F(z_0,y)+\eps\nonumber\\
&+\delta_0\liminf_{t\to\infty}\EE\left[\int_0^{\gamma\wedge \tau\wedge t}\left(Y^{y}_sF(Z^{z_0}_s,Y^y_s)-Y^{y_0}_sF(Z^{z_0}_s,Y^{y_0}_s)\right)dt\right].
\end{align}
Using now that, for $\frac{\sigma^2}{\delta_0}\geq 1$, the map $y \rightarrow yF(z,y)$ is non-increasing with
\begin{align}\label{h}
\frac{\partial}{\partial y} \left(yF(z,y)\right)= e^z  \left(\frac{1-y}{y} \right)^{\frac{\sigma^2}{\delta_0}} \left(1- \frac{\sigma^2/\delta_0}{(1-y)}\right),
\end{align}
and recalling once again that $Y^y_{\cdot}\ge Y^{y_0}_{\cdot}$, we see that \eqref{trans3} implies
\begin{align}\label{trans4}
v(z_0,y_0) - v(z_0,y)\le &F(z_0,y_0)-F(z_0,y)+\eps.
\end{align}
Since $\eps$ is arbitrary then $y\mapsto (v(z,y)-F(z,y))$ is non-decreasing and therefore $(i)$ and $(ii)$ easily follow.

The proof of $(iii)$ follows from the fact that $y\mapsto H_i(z,y)$ is decreasing for $i=1,2$.
\end{proof}

The next corollary is a simple consequence of Lemma \ref{connectedness} and \ref{connectednessagain}. We recall that $\CS_i\cap R^c_K=\emptyset$ as no player stops for $X_t<K$ (see Remark \ref{strictpositive} and Lemma \ref{rectangle}).

\begin{figure}[t]
\includegraphics[width=0.9\textwidth]{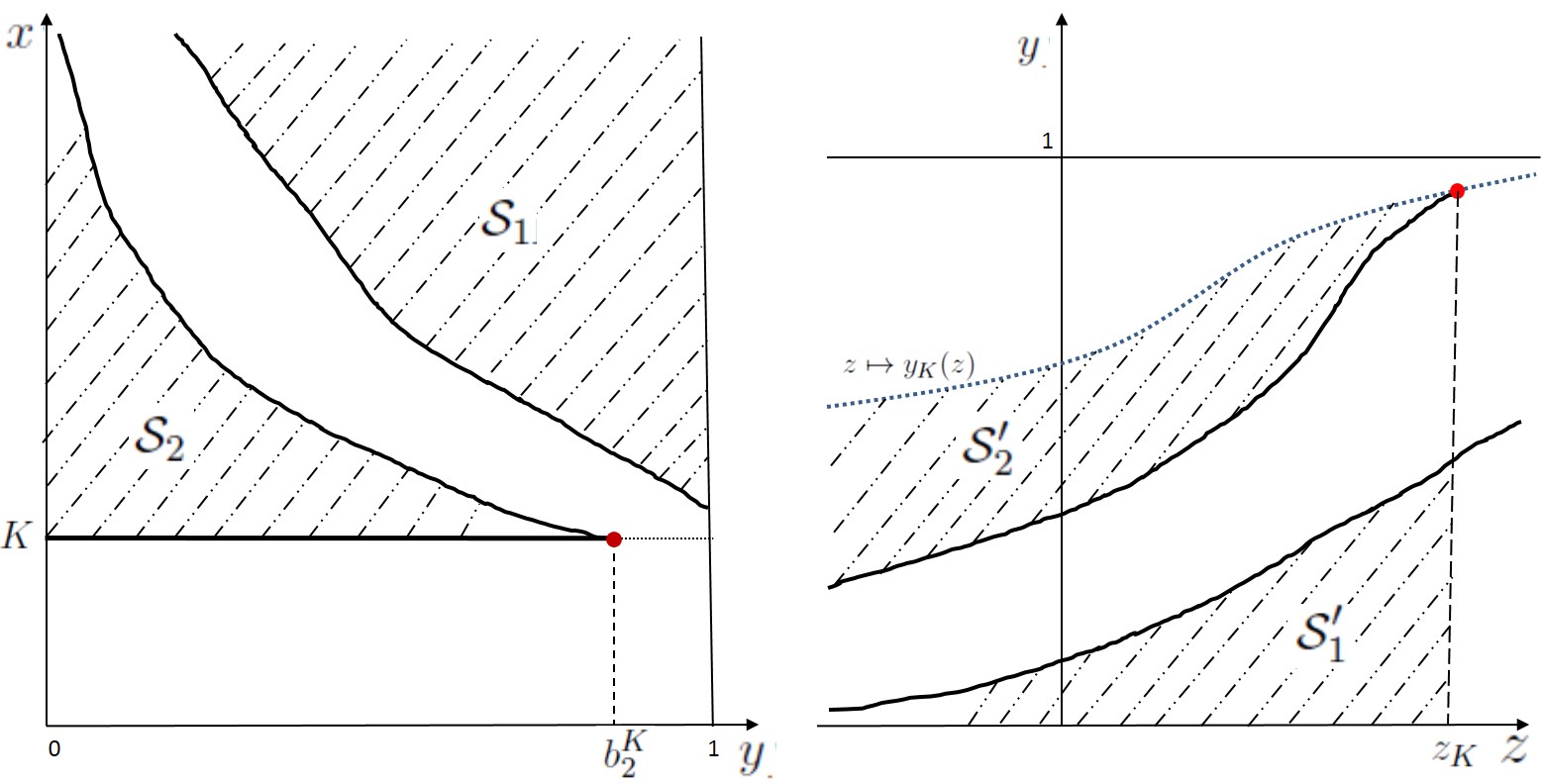}
\caption{An illustration of the sets $\CS_1$, $\CS_2$ (left) and of the sets $\CS_1'$, $\CS_2'$ (right).}
\end{figure}

\begin{cor}\label{cor:bb}
There exists non-decreasing functions $c_{1}:\RR\to[0,1]$, $c_{2}:(-\infty,z_K]\to[0,1]$, with $c_1(\cdot)\le c_2(\cdot)\le y_K(\cdot)$ on $(-\infty,z_K]$ and $c_1(\cdot)\le y_K(\cdot)$ on $(z_K,+\infty)$, such that
\begin{align}
&\CS'_1=\{(z,y)\in \RR\times[0,1]\,\tq\,y\le c_1(z)\},\\
&\CS'_2=\{(z,y)\in (-\infty,z_K]\times[0,1]\,\tq\,y\in[ c_2(z),y_K(z)]\}.
\end{align} 
\end{cor}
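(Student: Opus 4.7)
The plan is to define $c_1$ and $c_2$ via suitable $\sup/\inf$, use the slice-connectedness results of Lemmas \ref{connectedness} and \ref{connectednessagain} to identify the shape of the cross-sections of $\CS'_1$ and $\CS'_2$, and then derive monotonicity in $z$ (with extra care along the boundary curve $y=y_K(z)$).

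First I would set $c_1(z):=\sup\{y\in[0,1]:(z,y)\in\CS'_1\}$ (with $\sup\emptyset=0$) and $c_2(z):=\inf\{y\in[0,1]:(z,y)\in\CS'_2\}$. Part (ii) of Lemma \ref{connectednessagain} combined with closedness of $\CS'_1$ shows that the vertical cross-section $\CS'_1\cap(\{z\}\times[0,1])$ equals $[0,c_1(z)]$, while part (i) combined with $\CS'_2\subset R_K$ and closedness of $\CS'_2$ shows that the cross-section of $\CS'_2$ equals $[c_2(z),y_K(z)]$ when non-empty. Monotonicity of $c_1$ on $\RR$ is immediate from Lemma \ref{connectedness}(ii): for $z_1\le z_2$ and any $y\le c_1(z_1)$, $(z_1,y)\in\CS'_1\Rightarrow(z_2,y)\in\CS'_1$, so $y\le c_1(z_2)$ and hence $c_1(z_1)\le c_1(z_2)$.

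Next I would identify $\mathcal{Z}:=\{z:\CS'_2\cap(\{z\}\times[0,1])\neq\emptyset\}$, the intended domain of $c_2$. Up-closedness of the vertical slices forces $z\in\mathcal{Z}$ if and only if $(z,y_K(z))\in\CS'_2$, and translating back to the $(x,y)$-plane via $F(z,y_K(z))=K$ yields the simple criterion $b_2(y_K(z))\ge K$. Since $y_K$ is increasing in $z$ and $b_2$ is non-increasing in $y$ (Corollary \ref{cor:b}), the composition $z\mapsto b_2(y_K(z))$ is non-increasing, so $\mathcal{Z}$ is a down-set; upper-semi-continuity of $b_2$ and continuity of $y_K$ make it closed, hence $\mathcal{Z}=(-\infty,z_K]$ with $z_K$ as in \eqref{zK}. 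For monotonicity of $c_2$ on $(-\infty,z_K]$, I fix $z_1<z_2\le z_K$ and split into two cases. If $c_2(z_2)\le y_K(z_1)$ (Case A) then $(z_1,c_2(z_2))\in R_K$, and Lemma \ref{connectedness}(i) applied to $(z_2,c_2(z_2))\in\CS'_2$ yields $(z_1,c_2(z_2))\in\CS'_2$, so $c_2(z_1)\le c_2(z_2)$. If $c_2(z_2)>y_K(z_1)$ (Case B), then since $z_1\le z_K$ puts $z_1\in\mathcal{Z}$, the description of $\mathcal{Z}$ forces $(z_1,y_K(z_1))\in\CS'_2$, so $c_2(z_1)\le y_K(z_1)<c_2(z_2)$. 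The bounds $c_1\le y_K$ on $\RR$ and $c_2\le y_K$ on $(-\infty,z_K]$ follow from $\CS'_i\subset R_K$, while $c_1\le c_2$ on $(-\infty,z_K]$ follows from $\CS'_1\cap\CS'_2=\emptyset$ (any $y\in(c_2(z),c_1(z)]$ would belong to both cross-sections).

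The main obstacle is Case B above: the naive attempt to pull back the corner point $(z_2,c_2(z_2))$ via Lemma \ref{connectedness}(i) fails because the target $(z_1,c_2(z_2))$ lies outside $R_K$. The resolution exploits the prior identification of $\mathcal{Z}=(-\infty,z_K]$, which provides the alternative point $(z_1,y_K(z_1))\in\CS'_2$ on the upper boundary of $R_K$ directly, without any pull-back. This step is the substantive content beyond what Lemmas \ref{connectedness}--\ref{connectednessagain} supply on their own, and relies crucially on the translation to the $(x,y)$-plane and the monotonicity properties of $b_2$ from Corollary \ref{cor:b}.
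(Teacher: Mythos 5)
Your proof is correct, and it follows essentially the route the paper has in mind (the paper only asserts that the corollary is ``a simple consequence of Lemma \ref{connectedness} and \ref{connectednessagain}'' without writing out the argument). The reduction to slice structure via Lemma \ref{connectednessagain}, the monotonicity of $c_1$ via Lemma \ref{connectedness}(ii), and the disjointness argument for $c_1\le c_2$ are all straightforward and correctly carried out.

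The one genuinely substantive point, which you correctly identify, is your Case B. Lemma \ref{connectedness}(i) only lets you pull $(z_2,c_2(z_2))$ back to $(z_1,c_2(z_2))$ when the target stays inside $R_K$; when $c_2(z_2)>y_K(z_1)$ that pull-back is unavailable, and something extra is needed. Your resolution --- establishing first that $\mathcal{Z}=\{z:(z,y_K(z))\in\CS'_2\}$ is a closed down-set, hence exactly $(-\infty,z_K]$, by translating to the $(x,y)$-plane (where $(z,y_K(z))\in\CS'_2 \Leftrightarrow (K,y_K(z))\in\CS_2 \Leftrightarrow b_2(y_K(z))\ge K$) and invoking the monotonicity and semicontinuity of $b_2$ from Corollary \ref{cor:b} --- is clean and shows the statement is not quite a consequence of Lemmas \ref{connectedness}--\ref{connectednessagain} alone but also leans on Lemma \ref{monotonic}/Corollary \ref{cor:b} (equivalently Lemma \ref{S2-non-empty}). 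This is a worthwhile observation that the paper glosses over.

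Two minor remarks, neither a gap. First, the paper's formal definition of $\CS'_i$ is as a subset of $\RR\times(0,1)$, whereas the corollary writes the slices over $[0,1]$; your conventions ($\sup\emptyset=0$ for $c_1$, restricting $c_2$ to $\mathcal{Z}$) handle this harmlessly. Second, you could mention that $z_K<\infty$ (so that the domain of $c_2$ really is a proper half-line); this follows because $\CS_2\cap\{x>K\}\subset A_2^c$ forces $b_2^K\le r(K-\eps_0)/(\delta_0 K)<1$, and hence $z_K=y_K^{-1}(b_2^K)$ is finite --- but this is implicit in the paper's definition \eqref{b2K} and does not affect the correctness of your argument.
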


Next we provide continuity of the boundaries $b_i$ and $c_i$, $i=1,2$.

\begin{pro}\label{b2continuous}
The stopping boundaries $b_1,b_2$ and $c_1,c_2$ are continuous.
\end{pro}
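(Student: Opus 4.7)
The plan is to first establish continuity of the parabolic boundaries $c_1,c_2$ and then transfer it to $b_1,b_2$ using the smoothness of the transformation $F$. Since $(z,y)\mapsto(F(z,y),y)$ is a $C^\infty$ diffeomorphism between $\RR\times(0,1)$ and $(0,\infty)\times(0,1)$ with $\partial_zF>0$, the relation $b_i(y)=F(c_i^{-1}(y),y)$ (with generalised inverses used where flats occur) automatically turns continuity of $c_i$ in $z$ into continuity of $b_i$ in $y$. So I would concentrate the substantive work on $c_1$ and $c_2$.

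For $c_1$, Corollary \ref{cor:bb} gives monotonicity and, combined with closedness of $\CS_1'$, right-continuity, so the only obstruction is a left-jump $c_1(z_0-)<c_1(z_0)$. To exclude this, I would pick $y_1<y_2$ strictly inside the gap such that the slice $\{(F(z_0,y),y):y\in[y_1,y_2]\}$ avoids the one-dimensional locus $\{xy=rK/\delta_0\}$; by Proposition \ref{inclusionregions} this ensures $(\CG-r)H_1(z_0,y)=(\CL-r)G_1(F(z_0,y),y)<0$ strictly on $[y_1,y_2]$. For $\eta$ small the rectangle $R:=(z_0-\eta,z_0)\times(y_1,y_2)$ lies in $\CC'$ (by right-continuity of $c_1$ and the fact that $c_1(z)\le c_1(z_0-)<y_1$ for $z<z_0$); on $R$, $v$ is $C^\infty$ with $(\CG-r)v=0$, and $v=H_1$ on the right edge. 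Setting $w:=v-H_1\ge 0$ and testing against a non-trivial $\psi\in C^\infty_c((y_1,y_2))$ with $\psi\ge 0$, the integrated identity for $U(z):=\int\psi(y)w(z,y)\,dy$, obtained by multiplying the PDE by $\psi$ and integrating by parts twice in $y$, reads
\begin{align*}
kU'(z)=-\int\psi(y)(\CG-r)H_1(z,y)\,dy+rU(z)-\tfrac{1}{2}(\delta_0/\sigma)^2\int w(z,y)\big[y^2(1-y)^2\psi(y)\big]''\,dy.
\end{align*}
Sending $z\uparrow z_0$, the uniform vanishing of $w$ on $[y_1,y_2]$ (from continuity of $v$ and $H_1$) kills the last two terms while the first tends to a strictly positive limit, forcing $k\lim_{z\uparrow z_0}U'(z)>0$. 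On the other hand $U\ge 0$ with $U(z_0)=0$ gives $U'(z_0-)\le 0$, which is a contradiction whenever $k>0$.

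A mirror version of the same scheme treats $c_2$: closedness of $\CS_2'$ combined with monotonicity yields left-continuity of $c_2$, and a hypothetical right-jump $c_2(z_0)<c_2(z_0+)$ is excluded by placing the test rectangle to the right of $z_0$, working with $w:=v-H_2\le 0$, and invoking the strict inequality $(\CG-r)H_2>0$ on a slice that avoids $\{xy=r(K-\eps_0)/\delta_0\}$; this yields the contradiction when $k<0$. The two remaining sign combinations, $c_1$ for $k<0$ and $c_2$ for $k>0$, would be handled by running the analogous test-function scheme on the opposite side of the putative jump, invoking Lemma \ref{connectednessagain}(iii) to preserve strict signs. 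The hard part will be the sign bookkeeping: the deterministic drift $k=r-\sigma^2/2-\delta_0/2$ orients the parabolic operator $\CG$ in the $z$-direction, so the side of the putative jump and the orientation of the test rectangle have to be matched to $\mathrm{sgn}(k)$; the careful choice of $[y_1,y_2]$ off the codimension-one loci $\{(\CL-r)G_i=0\}$ is essential to keep the forcing term in the $U$-identity strictly signed and is what makes the contradiction run in each case.
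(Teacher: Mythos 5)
Your overall strategy---excluding jumps of the parabolic boundaries by testing an integrated form of the PDE on a small rectangle abutting the putative jump---is sound, and the identity you write for $U(z)=\int\psi\,w\,dy$ is correct. The argument you give for $c_1$ when $k>0$ (and for $c_2$ when $k<0$) does produce the contradiction you claim, and it is a legitimate alternative to the paper's scheme for those sign combinations. Two of the remaining steps, however, do not go through, and they are exactly where the paper's proof does something structurally different.

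First, the transfer from continuity of $c_i$ to continuity of $b_i$ is not automatic. The relation $b_i(y)=F(z_*(y),y)$ with $z_*(y)=\inf\{z:\,c_i(z)\ge y\}$ is correct, but the generalised inverse of a continuous non-decreasing function is \emph{not} continuous: a flat of $c_i$ on $[z_1,z_2]$ at level $y^*$ produces a jump of $z_*$ at $y^*$ and hence, since $\partial_z F>0$, a jump of $b_i$. That neither $c_i$ nor $b_i$ has flats is a \emph{consequence} of Proposition~\ref{b2continuous}, not something one may assume en route. The paper therefore proves continuity of $b_1,b_2$ by a separate argument that does not pass through $c_i$: e.g.\ for right-continuity of $b_2$, if $b_2(y_0+)<x_0<b_2(y_0)$ and $F(z_0,y_0)=x_0$, then $(z_0,y_0)\in\CS_2'$ and Lemma~\ref{connectednessagain} yields $(z_0,y_n)\in\CS_2'$ for $y_n\downarrow y_0$, so $F(z_0,y_n)\le b_2(y_n)$ and in the limit $x_0\le b_2(y_0+)$, a contradiction. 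This uses the $y$-connectedness of $\CS_2'$, not continuity of $c_2$; you would need an analogous direct argument for the $b_i$'s.

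Second, the ``opposite side'' fix for the remaining sign combinations does not work. Your $U$-identity requires the rectangle to sit in $\CC'$ so that $(\CG-r)v=0$ holds there; on the other side of, say, a left-jump of $c_1$ the entire slice $\{z>z_0\}\times(y_1,y_2)$ lies in $\CS_1'$, where $v=H_1$ and $w\equiv0$, so there is no PDE and nothing to test. The paper handles the complementary combination ($c_2$ with $k>0$) by a genuinely different device: it differentiates $(\CG-r)v=0$ in $y$, obtaining a PDE for $\overline{w}:=\partial_y(v-H_2)$ with forcing term $\delta_0 h$, $h=\tfrac{\partial}{\partial y}(yF)$, and tests \emph{that} equation with $\psi$. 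The strict sign is then furnished by $h<0$ (Assumption~\ref{ass:2}) rather than by the sign of $(\CG-r)H_2$ on a slice avoiding a codimension-one locus, and the contradicting inequality comes from the monotonicity $\partial_y(v-H_2)\ge 0$ established in the proof of Lemma~\ref{connectednessagain}, rather than from $U\ge0$ and $U(z_0)=0$. To complete your proof you would have to import this differentiated scheme; merely reflecting your own across $z_0$ cannot work for the reason above.
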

\begin{proof}
\emph{Step 1.} First we prove the claim for $b_1,b_2$.
Since the proofs are similar for the two boundaries, we only provide details for $b_2$.
According to Corollary \ref{cor:b}, the boundary $b_2$ is left-continuous. To show the right-continuity, we will argue by contradiction.

Assume that there exists $y_0 \in (0,1)$ such that $b_2(y_0+) < b_2(y_0)$ and fix $x_0 \in (b_2(y_0+),b_2(y_0))$. Next define $z_0$ by $F(z_0,y_0)=x_0$ and notice that since $x_0 < b_2(y_0)$, then $(F(z_0,y_0),y_0) \in \CS_2$ and therefore $(z_0,y_0) \in \CS'_2$. We take a decreasing sequence $(y_n)_n$ with $y_n\downarrow y_0$ as $n\to\infty$ so that Lemma \ref{connectednessagain} implies that $(z_0,y_n) \in \CS'_2$ for all $n$. Equivalently $x_n=F(z_0,y_n) \le b_2(y_n)$ so that taking limits and using that $F$ is continuous, we obtain $x_0=F(z_0,y_0)\le b(y_0+)$. The latter is a contradiction.  
\vspace{+4pt}

\emph{Step 2.} Now we show continuity of $c_1,c_2$. Let us start from $c_2$ and fix $z_0$. Take a sequence $z_n\uparrow z_0$ as $n\to\infty$ so that $(z_n,c_2(z_n))\to (z_0,c_2(z_0-))$, where $c_2(z_0-)\le c_2(z_0)$ and the limit exists by monotonicity. Since $\CS_2'$ is closed we have $(z_0,c_2(z_0-))\in\CS_2'$ and therefore $c_2(z_0-)\ge c_2(z_0)$, hence implying left-continuity. 

To prove that $c_2$ is also right-continuous we use Theorem 3.3 in \cite{DeA15}. Since the latter theorem is not given in our game context we repeat here some arguments for completeness. Let us assume $c_2(z_0)<c_2(z_0+)$ and denote $y_0:=c_2(z_0)$, $y_1:=c_2(z_0+)$ for simplicity. Fix $z_1>z_0$ such that the open rectangle $\CR$ with vertices $(z_0,y_0)$, $(z_0,y_1)$, $(z_1,y_1)$ and $(z_1,y_0)$ is contained in $\CC'$. Let $w:=v-H_2$ and $\overline w:=\partial w/\partial y$, then results of interior regularity for solutions to PDEs (see e.g. \cite[Corollary 2.4.3]{krylov}) imply that $\overline w\in C^{1,2}(\CR)$ and, by deriving \eqref{freeb1} with respect to $y$, it turns out that 
\begin{align}\label{cont1}
\left(k\frac{\partial \overline w}{\partial z}+\CA\overline w -r\overline w\right)(z,y) = \delta_0 h(z,y),\quad\text{for $(z,y)\in\CR$,}
\end{align}
where $h(z,y):=\tfrac{\partial}{\partial y}(y F(z,y))$ and 
\begin{align*}
\CA:= \tfrac{1}{2}\left(\frac{\delta_0}{\sigma}\right)^2y^2(1-y)^2\frac{\partial^2}{\partial y^2}+\left(\frac{\delta_0}{\sigma}\right)^2y(1-y)(1-2y)\frac{\partial}{\partial y}.
\end{align*}

Let $\psi\in C^\infty_c(y_0,y_1)$ be positive and such that $\int^{y_1}_{y_0}\psi(y)dy=1$. Multiply \eqref{cont1} by $\psi$ and integrate by parts over $(y_0,y_1)$ to obtain
\begin{align}
L_\psi(z):=&k\int^{y_1}_{y_0}\frac{\partial\overline w}{\partial z}(z,y)\psi(y)dy\\
=&\int^{y_1}_{y_0}w(z,y)\frac{\partial}{\partial y}[(r-\CA^*)]\psi(y)dy+\delta_0\int^{y_1}_{y_0}h(z,y)\psi(y)dy,\nonumber
\end{align}
where $\CA^*$ is the adjoint operator of $\CA$. Now, taking limits as $z\downarrow z_0$, we can use dominated convergence in the right hand side of the above equation and the fact that $w(z_0,\cdot)=0$ on $(y_0,y_1)$, to find
\begin{align}\label{cont2}
L_\psi(z_0+)=\lim_{z\downarrow z_0}L_\psi(z)=\delta_0\int^{y_1}_{y_0}h(z_0,y)\psi(y)dy\le -\delta_0\ell. 
\end{align}
In the final inequality we set $-\ell:=\sup_{(y_0,y_1)}h(z_0,y)$ and notice that $\ell>0$ due to $\sigma^2/\delta_0\ge 1$ (see \eqref{h}).

By its definition $L_\psi\in C(z_0,z_1)$ and \eqref{cont2} implies that its right limit at $z_0$ exists and it is strictly negative. Then for some $\delta>0$, using integration by parts and Fubini's theorem, we have
\begin{align}
0>&\int^{z_0+\delta}_{z_0}L_\psi(z)dz=-k \int^{y_1}_{y_0}\left(\int^{z_0+\delta}_{z_0}\frac{\partial w}{\partial z}(z,y)dz\right)\psi'(y) dy\nonumber\\
=&-k\int^{y_1}_{y_0}w(z_0+\delta,y)\psi'(y)dy= k\int^{y_1}_{y_0}\frac{\partial w}{\partial y}(z_0+\delta,y)\psi(y)dy\ge 0,
\end{align} 
where the last inequality follows because $w(z_0+\delta,\cdot)$ is non-decreasing as shown in the proof of Lemma \ref{connectednessagain}. Therefore we reach a contradiction and $c_2$ must be continuous at $z_0$. By arbitrariness of $z_0$ we conclude that $c_2$ is continuous.

To prove continuity of $c_1$ we simply refer to \cite[Thm.~3.1]{DeA15}. The latter is not obtained in a game context but arguments as above allow a straightforward extension to it. We also notice that in applying that theorem we use that $v$ is locally Lipschitz on $\mathbb{R}\times(0,1)$.
\end{proof}

\section{Regularity across the boundaries}

In this section we show that the value function $V$ is indeed $C^1$ in $\RR_+^*\times(0,1)$. The key to this result is the so-called \emph{regularity} of the optimal boundaries. Roughly speaking this means that the process $(X,Y)$ immediately enters the interior of the sets $\CS_1$ and $\CS_2$ upon hitting their boundaries $\partial \CS_1$ and $\partial \CS_2$. Analogous considerations apply to the process $(Z,Y)$ and the sets $\CS'_1$, $\CS'_2$.

We recall that we work under Assumption \ref{ass:2}.
Let us introduce the hitting times 
\begin{align}
&\hat{\tau}_*:=\inf\{t>0\,\vert\,(X_t,Y_t)\in \CS_1\}=\inf\{t>0\,\vert\,(Z_t,Y_t)\in \CS'_1\}\\[+4pt]
&\hat{\gamma}_*:=\inf\{t>0\,\vert\,(X_t,Y_t)\in \CS_2\}=\inf\{t>0\,\vert\,(Z_t,Y_t)\in \CS'_2\}.
\end{align}
The next lemma provides a clear statement of the regularity of the optimal boundaries for the diffusions $(X,Y)$ and $(Z,Y)$. Its proof is postponed to the end of the section so that we can move quickly towards the main result, i.e.~Proposition \ref{prop:C1}. 
\begin{lem}\label{lem:reg-b}
If $(x_0,y_0)\in\partial \CS_1$ (resp.~$(z_0,y_0)\in\partial \CS'_1$) then 
\begin{align}\label{regS1}
\PP_{x_0,y_0}(\hat{\tau}_*>0)=0\qquad(\text{resp.}~\PP_{z_0,y_0}(\hat{\tau}_*>0)=0).
\end{align}
Similarly, if $(x_0,y_0)\in\partial \CS_2$ (resp.~$(z_0,y_0)\in\partial \CS'_2$) then
\begin{align}\label{regS2}
\PP_{x_0,y_0}(\hat{\gamma}_*>0)=0\qquad(\text{resp.}~\PP_{z_0,y_0}(\hat{\gamma}_*>0)=0).
\end{align}
Notice that if $k>0$ \eqref{regS2} holds with $(x_0,y_0)\neq(K,b_2^K)$ (resp.~$(z_0,y_0)\neq(z_K,\overline y_K)$).
\end{lem}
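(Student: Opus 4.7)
The plan is to work in the $(Z,Y)$ coordinates, where $Z_t = z_0+kt$ is deterministic and $Y$ is a continuous martingale on $(0,1)$ with strictly positive quadratic variation at every interior point. Because the map $(z,y)\mapsto(F(z,y),y)$ is a homeomorphism, the $(X,Y)$ and $(Z,Y)$ versions of the statement are equivalent, so I would prove only the $(z,y)$ form. Using Corollary \ref{cor:bb} and Proposition \ref{b2continuous}, the stopping sets are parametrised as
\[
\CS'_1=\{y\le c_1(z)\},\qquad \CS'_2=\{(z,y):z\le z_K,\;c_2(z)\le y\le y_K(z)\},
\]
with $c_1,c_2$ continuous non-decreasing and $y_K$ smooth increasing; entry into a stopping set then becomes a moving-boundary crossing problem for the one-dimensional martingale $Y$.

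I would first dispatch the \emph{easy} orientations, where the deterministic drift moves the free boundary in a direction favourable to entry. For $(z_0,y_0)\in\partial\CS'_1$ with $k>0$, monotonicity of $c_1$ gives $c_1(z_0+kt)\ge y_0$ for $t>0$, hence $\{Y_t\le y_0\}\subseteq\{(Z_t,Y_t)\in\CS'_1\}$; since $Y_t-y_0$ is a continuous local martingale with strictly positive quadratic variation near $0$, the event $\{Y_t\le y_0\}$ occurs for arbitrarily small $t>0$ almost surely, proving $\hat\tau_*=0$. The lower part of $\partial\CS'_2$ with $k<0$ is symmetric: then $c_2(z_0+kt)\le y_0<y_K(z_0+kt)$ and $\{y_0\le Y_t\le y_K(z_0+kt)\}\subseteq\{(Z_t,Y_t)\in\CS'_2\}$ by the same oscillation property, giving $\hat\gamma_*=0$. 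Points on the upper smooth segment of $\partial\CS'_2$ (i.e.\ $y_0=y_K(z_0)$, $z_0\le z_K$, except the corner when $k>0$) are handled by a direct law-of-the-iterated-logarithm comparison, using that $y_K$ is $C^\infty$.

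The crux is the two \emph{hard} orientations, namely $\partial\CS'_1$ with $k<0$ and the lower part of $\partial\CS'_2$ with $k>0$. In each case the boundary $c_i(z_0+kt)$ recedes from $y_0$ as $t$ grows, so merely crossing the level $y_0$ is not enough. I would combine the law of the iterated logarithm
\[
\liminf_{t\downarrow 0}\frac{Y_t-y_0}{\sqrt{2[Y]_t\log\log(1/[Y]_t)}}=-1\quad \PP_{z_0,y_0}\text{-a.s.}
\]
(and its upper analogue) with a local H\"older-type estimate on $c_i$ at $z_0$. The latter should follow from the interior parabolic regularity of $v$ on $\CC'$ (where $\CG v-rv=0$), coupled with the strict signs $(\CG-r)H_1\le 0$ on $\CS'_1$ that come from $\CS'_1\subseteq A_1^c\cap\{x>K\}$ in Proposition \ref{inclusionregions}, and the analogous $(\CG-r)H_2\ge 0$ on $\CS'_2$. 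Once $|c_i(z_0+kt)-y_0|=o(\sqrt{t\log\log(1/t)})$ has been secured, the LIL produces a sequence $t_n\downarrow 0$ along which $Y_{t_n}$ undershoots (resp.\ overshoots) the receding boundary, forcing $\hat\tau_*=0$ (resp.\ $\hat\gamma_*=0$).

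Finally, for $k>0$ and $(z_0,y_0)=(z_K,\overline y_K)$, one has $Z_t=z_K+kt>z_K$ for all $t>0$ and, by the definition of $z_K$ and the inclusion $\CS'_2\subseteq\{z\le z_K\}$, the point $(Z_t,Y_t)$ cannot lie in $\CS'_2$ for any $t>0$; hence $\hat\gamma_*=+\infty$ $\PP_{z_K,\overline y_K}$-a.s., which is precisely why this corner must be excluded from \eqref{regS2}. The main obstacle I anticipate is the H\"older estimate on $c_1,c_2$ needed to execute the LIL comparison in the hard cases; securing it likely calls for a bootstrap linking the already-established continuity of $c_i$ (Proposition \ref{b2continuous}), the non-degeneracy of $\CG$ in the $y$-direction, and the strict PDE signs near the boundary.
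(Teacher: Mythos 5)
Your treatment of the \emph{easy} orientations matches the paper: when the deterministic drift $kt$ moves the boundary in a direction favourable to entry, the law of the iterated logarithm for $Y$ suffices, and the exclusion of the corner $(z_K,\overline y_K)$ for $k>0$ is correctly explained (indeed $Z_t>z_K$ for $t>0$ and $\CS'_2\subseteq\{z\le z_K\}$, so $\hat\gamma_*=+\infty$ there). You have also correctly identified which orientations are the hard ones.

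However, your proposed handling of the hard case leaves a genuine gap. You reduce the problem to the modulus-of-continuity estimate
\[
|c_i(z_0+kt)-y_0|=o\bigl(\sqrt{t\log\log(1/t)}\bigr)\qquad\text{as }t\downarrow 0,
\]
and acknowledge that this is ``the main obstacle,'' offering only a vague plan (interior parabolic regularity, strict PDE signs, ``a bootstrap''). This estimate is essentially a $\tfrac12$-H\"older modulus for a parabolic free boundary at an a priori arbitrary boundary point, which is a delicate quantitative fact; the tools you cite (continuity of $c_i$ from Proposition \ref{b2continuous}, non-degeneracy of $\CG$, the sign conditions from Proposition \ref{inclusionregions}) are not obviously sufficient to produce it, and you do not attempt to derive it. Without that estimate your LIL comparison does not close, so the hard case is not proved.

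The paper avoids any rate estimate on the boundary. Instead it first establishes the \emph{smooth-fit} property (Lemma \ref{lem:sm-f}): at a boundary point in the hard orientation, the one-sided normal derivative of $w=v-H_2$ (resp.\ $v-H_1$) must vanish, and this is obtained purely probabilistically via the It\^o--Tanaka formula, local time at the level $y_0$, and a Burkholder--Davis--Gundy lower bound. It then proves \eqref{regS2} (and, symmetrically, \eqref{regS1}) by contradiction: if $(z_0,y_0)$ were \emph{not} regular, passing to the limit in the semi-harmonic inequalities \eqref{Psub}--\eqref{Psup} with Fatou's lemma would force $w_y(z_0,y_0-)>0$, contradicting smooth-fit. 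This route bypasses free-boundary regularity estimates entirely, replacing them by a comparison of one-sided derivatives. If you want to salvage your approach you would need to actually prove the $o(\sqrt{t\log\log(1/t)})$ modulus, which is substantially harder than the problem you are trying to solve; otherwise you should adopt the smooth-fit-plus-contradiction strategy.
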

Adopting the convention that $[K,b_2(y)]=\emptyset$ for $y>\overline y_K$ and $[c_2(z),y_K(z)]=\emptyset$ for $z>z_K$, we can use Corollary \ref{cor:bb} and write $\PP$-a.s.
\begin{align}
&\hat{\tau}_*=\inf\{t>0\,\vert\,X_t\ge b_1(Y_t)\}=\inf\{t>0\,\vert\,Y_t\le c_1(Z_t)\}\\[+4pt]
&\hat{\gamma}_*=\inf\{t>0\,\vert\,X_t\in[K,b_2(Y_t)]\}=\inf\{t>0\,\vert\,Y_t\in[c_2(Z_t),y_K(Z_t)]\}. 
\end{align}
To avoid further technicalities we assume that 
\[
c_2(z)\neq y_K(z)~\text{for $z<z_K$}\quad\text{(resp.~}b_2(y)\neq K~\text{for $y<b^K_2$)},
\]
however all the results of this section can be easily adapted to the case in which $c_2=y_K$ for some $z$ (i.e.~$b_2=K$ for some $y$).

We consider hitting times to the interior of the stopping sets, i.e.~we define $\PP$-a.s.
\begin{align}
&\check{\tau}:=\inf\{t>0\,\vert\,X_t> b_1(Y_t)\}=\inf\{t>0\,\vert\,Y_t< c_1(Z_t)\}\\[+4pt]
&\check{\gamma}:=\inf\{t>0\,\vert\,X_t\in(K, b_2(Y_t))\}=\inf\{t>0\,\vert\,Y_t\in( c_2(Z_t),y_K(Z_t))\}.
\end{align}
Notice that for each line, the second equality follows from the continuity of the optimal boundaries. Precisely, for all $(z,y) \in R_K$, we have the equivalences 

\[ 
F(z,y)< b_2(y) \Leftrightarrow y > c_2(z),\:\:\text{and}\:\: F(z,y)> b_1(y) \Leftrightarrow y < c_1(z).
\]
We remark that if $c_2=y_K$ on an interval $\mathcal I$ then $\check\gamma$ should account also for the first \emph{crossing} time of $c_2\,|_{\mathcal I}$. 
\vspace{+3pt}

An argument used in \cite{CP15}, Corollary 8 (see eq.~(2.39) therein) allows us to obtain the next useful lemma. The proof, originally developed in \cite{CP15} is given in Appendix \ref{app:lemhitC} for the reader's convenience. 
\begin{lem}\label{lem:hitC}
For any $(x,y)\in \RR\times[0,1]$ we have
\begin{align}
\PP_{x,y}(\hat{\tau}_*=\check \tau)=\PP_{x,y}(\hat{\gamma}_*=\check \gamma)=1.
\end{align}
Equivalently for any $(z,y)\in\RR\times(0,1]$ we have
\begin{align}
\PP_{z,y}(\hat{\tau}_*=\check \tau)=\PP_{z,y}(\hat{\gamma}_*=\check \gamma)=1.
\end{align}
\end{lem}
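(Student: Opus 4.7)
Since the two statements in the lemma are equivalent via the continuous bijection $F$ of Section 5, I work in the $(z,y)$-coordinates and focus on $\hat\tau_* = \check\tau$; the argument for $\hat\gamma_* = \check\gamma$ is entirely parallel, with the corner $(z_K,\overline y_K)$ excluded exactly where Lemma \ref{lem:reg-b} fails. The easy direction $\hat\tau_* \le \check\tau$ is immediate since $\text{int}(\CS'_1) \subseteq \CS'_1$.

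For the reverse inequality, apply the strong Markov property of $(Z,Y)$ at $\hat\tau_*$: on $\{\hat\tau_* < \infty\}$, the shifted process starts at $(Z_{\hat\tau_*}, Y_{\hat\tau_*}) \in \CS'_1$. When this point lies in $\text{int}(\CS'_1)$ we get $\check\tau = \hat\tau_*$ at once, so the problem reduces to proving $\PP_{z_0, y_0}(\check\tau = 0) = 1$ for every $(z_0, y_0) \in \partial\CS'_1$. Since $\{\check\tau = 0\} \in \CF_{0+}$, Blumenthal's zero-one law reduces this further to excluding the case $\check\tau > 0$ a.s.

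Fix then $(z_0, y_0) \in \partial\CS'_1$, i.e.\ $y_0 = c_1(z_0)$, and suppose for contradiction $\check\tau > 0$ a.s. By continuity of $Y$ and of $c_1$ (Proposition \ref{b2continuous}), on an event of positive probability the process $\tilde Y_t := Y_t - c_1(Z_t)$ stays non-negative on some interval $[0,\delta]$; since $c_1$ is monotone (Corollary \ref{cor:bb}) and $Z_t = z_0 + kt$ is affine, $c_1 \circ Z$ is of bounded variation, and $\tilde Y$ is a continuous semimartingale. But Lemma \ref{lem:reg-b} forces $\hat\tau_* = 0$ a.s., producing a sequence $t_n \downarrow 0$ along which $\tilde Y_{t_n} \le 0$; combined with non-negativity on $[0,\delta]$, this gives $\tilde Y_{t_n} = 0$. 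When $k > 0$, the drift of $\tilde Y$ is non-positive (since $c_1 \circ Z$ is non-decreasing), so $\tilde Y_t \le M_t := \int_0^t -\tfrac{\delta_0}{\sigma} Y_s(1-Y_s) \,dW_s$; the positive quadratic variation $\langle M \rangle_t \sim (\tfrac{\delta_0}{\sigma} y_0(1-y_0))^2 t$ at $t = 0$, combined with Dambis-Dubins-Schwarz and the Brownian law of the iterated logarithm, produces values $M_t < 0$ at arbitrarily small $t$, contradicting $\tilde Y \ge 0$ on $[0,\delta]$.

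The case $k < 0$ is the main obstacle: the drift of $\tilde Y$ is then non-negative and the one-sided LIL argument fails. I would handle it via the Cai-Peskir approximation of \cite{CP15}: introduce $\sigma_\epsilon := \inf\{t \ge 0 : Y_t \le c_1(Z_t) - \epsilon\}$, observe that $\sigma_\epsilon \downarrow \check\tau$ by continuity of $Y - c_1 \circ Z$, and establish regularity of the shifted boundary $\{Y = c_1(Z) - \epsilon\}$ by an argument parallel to Lemma \ref{lem:reg-b}, obtaining $\sigma_\epsilon \to 0$ as $\epsilon \downarrow 0$ and hence $\check\tau = 0$. Transfer to the $(x,y)$-formulation is immediate since $F$ maps $\CS'_i$ homeomorphically onto $\CS_i$.
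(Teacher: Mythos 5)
Your reduction to showing $\check\tau=0$ a.s.\ at boundary points (strong Markov at $\hat\tau_*$ plus Blumenthal's zero-one law) and your treatment of the \emph{easy} direction are sound and match the paper in spirit: for the monotonicity of $k$ that makes $c_1\circ Z$ move \emph{away} from the process, the drift of $Y - c_1\circ Z$ has the helpful sign and the one-sided law of the iterated logarithm finishes the job, exactly as the paper asserts (the DDS detour is not needed but not wrong). The problem is the hard case, where you have a genuine gap.

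You write: ``establish regularity of the shifted boundary $\{Y = c_1(Z) - \epsilon\}$ by an argument parallel to Lemma \ref{lem:reg-b}, obtaining $\sigma_\epsilon \to 0$ as $\epsilon\downarrow 0$.'' This does not go through for two independent reasons. First, regularity of the shifted curve is a statement about the behaviour of the process started \emph{on} that curve; it says nothing about how fast the process started a fixed distance $\epsilon$ above it (namely on the original boundary $c_1$) reaches it. ``Regularity of $c_1-\epsilon$'' gives no quantitative information about $\sigma_\epsilon$, and certainly does not imply $\sigma_\epsilon\to 0$; indeed, proving $\sigma_\epsilon\to 0$ is essentially equivalent to proving $\check\tau=0$, which is the thing you are trying to show. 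Second, Lemma \ref{lem:reg-b} is not a generic geometric statement: its proof hinges on the smooth-fit condition (Lemma \ref{lem:sm-f}) and the super/sub-martingale structure of the value function along the \emph{optimal} boundary $c_1$; none of this is available for the arbitrary shifted curve $c_1-\epsilon$, so there is no ``parallel argument.''

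What the paper (following Cox--Peskir \cite{CP15}, not ``Cai--Peskir'') actually does in the hard case is a \emph{time-shift}, not a \emph{boundary-shift}. Writing the first entry time with a delay $\delta$ and with weak vs.\ strict inequality, $\hat\gamma^\delta_0$ vs.\ $\check\gamma^\delta_0$, the key estimate \eqref{CP1} is obtained by decomposing $(\delta,t]$ into pieces where $s\mapsto c_2(z+ks)$ is flat (there LIL does the work) or strictly monotone (there one uses $c_2(Z_s)<c_2(Z_{s+h})$ for small $h>0$ to convert the weak inequality into a strict one after a time shift), and then letting $h\to 0$ by Scheff\'e's theorem applied to the transition density of $Y$. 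Your plan contains none of these ingredients, so the hard case remains unproved.
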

The above lemma says that the process $(X,Y)$ (or equivalently $(Z,Y)$), upon hitting the optimal boundaries, will immediately enter the interior of the stopping set. 
This has the following important consequence
\begin{pro}\label{prop:convT}
Let $(x_n,y_n)_n$ be a sequence in $\CC$ and let $\hat{\tau}^n_{*}:=\hat{\tau}_*(x_n,y_n)$ and $\hat{\gamma}^n_*:=\hat{\gamma}_*(x_n,y_n)$ denote the corresponding hitting times for the process $(X^{x_n,y_n}, Y^{y_n})$. It follows that 
\begin{itemize}
\item[(i)] If $(x_n,y_n)\to (x_0,y_0)\in\CS_1$ as $n\to+\infty$, then $\hat{\tau}_*(x_n,y_n)\to 0$, $\PP$-a.s. 
\item[(ii)] If $(x_n,y_n)\to (x_0,y_0)\in\CS_2$ as $n\to+\infty$, then $\hat{\gamma}_*(x_n,y_n)\to 0$, $\PP$-a.s.
\end{itemize}
Notice that if $k>0$ the above holds with $(x_0,y_0)\neq(K,b_2^K)$.
\end{pro}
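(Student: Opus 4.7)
The plan is to combine three ingredients: the pathwise continuous dependence of the SDE \eqref{XY} on initial data, the regularity of the optimal boundaries provided by Lemma \ref{lem:reg-b} together with Lemma \ref{lem:hitC}, and the continuity of $b_1$ and $b_2$ established in Proposition \ref{b2continuous}. The overarching idea is that from any point of $\partial \CS_i$ the process instantly reaches the interior of $\CS_i$, and this behaviour is stable under small perturbations of the starting point, so that the slightly displaced trajectories also reach $\CS_i$ by that same (arbitrarily small) time.

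Let me detail case (i). Since $\CC$ is open and $(x_n,y_n)\in\CC$ converges to $(x_0,y_0)\in\CS_1$, the limit must lie in $\partial\CS_1$. Lemma \ref{lem:reg-b} then gives $\PP_{x_0,y_0}(\hat\tau_*>0)=0$, and Lemma \ref{lem:hitC} upgrades this to $\PP_{x_0,y_0}(\check\tau>0)=0$. Because $b_1$ is continuous, $\mathrm{int}(\CS_1)=\{(x,y):x>b_1(y)\}$, so on a $\PP$-full measure event $\Omega_0$, for every $\eps>0$ we can choose $t_0=t_0(\omega,\eps)\in(0,\eps)$ with $X^{x_0,y_0}_{t_0}(\omega)>b_1(Y^{y_0}_{t_0}(\omega))$. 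Simultaneously, standard results on continuous dependence of SDE solutions upon initial conditions (applicable since the coefficients in \eqref{XY} are locally Lipschitz) imply, up to a further $\PP$-null set, that $(X^{x_n,y_n}_t(\omega),Y^{y_n}_t(\omega))\to(X^{x_0,y_0}_t(\omega),Y^{y_0}_t(\omega))$ uniformly on compact time intervals for every $\omega\in\Omega_0$.

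Fixing $\omega\in\Omega_0$ and $\eps>0$, select $t_0$ as above. The strict inequality $X^{x_0,y_0}_{t_0}(\omega)-b_1(Y^{y_0}_{t_0}(\omega))>0$ combined with the continuity of $b_1$ and the convergence of the processes at the fixed time $t_0$ yields $X^{x_n,y_n}_{t_0}(\omega)>b_1(Y^{y_n}_{t_0}(\omega))$ for all $n$ large enough, hence $\hat\tau^n_*(\omega)\le t_0<\eps$. Arbitrariness of $\eps$ gives $\hat\tau^n_*(\omega)\to 0$. Case (ii) is handled by the same argument, using $\mathrm{int}(\CS_2)=\{(x,y):K<x<b_2(y)\}$ (again via Proposition \ref{b2continuous}) in place of $\mathrm{int}(\CS_1)$. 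The only point where the argument fails is the corner $(K,b_2^K)$ under $k>0$, since there Lemma \ref{lem:reg-b} does not apply; this is precisely why the proposition excludes it.

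The substantive content is encapsulated in Lemma \ref{lem:reg-b} (proved separately) and in the continuity of the boundaries. Once these are in hand, the rest is a routine interplay between pathwise continuous dependence on initial data and the openness of $\mathrm{int}(\CS_i)$. The one delicate bookkeeping is the exceptional corner in the $k>0$ case, which must be carried through the proof but simply restricts the set of admissible limit points.
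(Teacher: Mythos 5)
Your proof is correct and follows essentially the same route as the paper: reduce to the fact that $\check\tau(x_0,y_0)=0$ (resp.\ $\check\gamma(x_0,y_0)=0$) a.s.\ via Lemmas \ref{lem:reg-b} and \ref{lem:hitC}, then use pathwise continuous dependence on initial data and continuity of the boundaries to propagate the strict inequality that places the limit trajectory inside $\mathrm{int}(\CS_i)$ at a small time $t_0$ to the nearby trajectories. The only difference is cosmetic — you conclude $\hat\tau^n_*\le t_0$ directly from membership in $\mathrm{int}(\CS_1)\subset\CS_1$, where the paper first invokes $\hat\gamma^n_*=\check\gamma^n$ via Lemma \ref{lem:hitC} again — but the substance and the handling of the excluded corner $(K,b_2^K)$ for $k>0$ are identical.
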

\begin{proof}
Let us consider (ii) and with no loss of generality let $x_0=b_2(y_0)$ (arguments as below apply also to $x_0=K$). Denote $\check{\gamma}^n:=\check\gamma(x_n,y_n)$. Since $\hat{\gamma}^n_*=\check\gamma^n$ by Lemma \ref{lem:hitC}, it is sufficient to prove that $\check\gamma^n\to 0$. 
In particular $\check\gamma (x_0,y_0)=0$, $\PP$-a.s.~by Lemma \ref{lem:hitC} and Lemma \ref{lem:reg-b}. Hence there exists  a set of null measure $\mathcal N$ such that $\check\gamma (x_0,y_0)=0$ and $(x,y) \rightarrow (X^{x,y},Y^y)$ is continuous, for all $\omega \in \Omega\setminus\mathcal N$.
Fix $\omega \in \Omega\setminus\mathcal N$ and an arbitrary $\alpha>0$. We can find $t<\alpha$ such that $X^{x_0,y_0}_t(\omega)< b_2(Y^{y_0}_t(\omega))$. It follows that for all $n$ sufficiently large $X^{x_n,y_n}_t(\omega)< b_2(Y^{y_n}_t(\omega))$  because $(X^{x_n,y_n}_t(\omega),Y^{y_n}_t(\omega))\to(X^{x_0,y_0}_t(\omega),Y^{y_0}_t(\omega))$ and $b_2$ is continuous. Therefore $\limsup_{n}\check \gamma^n(\omega)<\alpha$. Since $\alpha$ is arbitrary and the argument holds for a.e.~$\omega$ we obtain (ii). 

The proof of (i) follows from an analogous argument.    
\end{proof}

Now we can use the result above to obtain continuous differentiability of the value function. In preparation for that we need to recall some results concerning differentiability of the stochastic flow. In particular by \cite{Pr04}, Theorem 39, Chapter V.7 we can define the process
\begin{align}
\text{~for all $t\ge0$}, \;U^y_t:=\frac{\partial {Y}^y_t}{\partial y}\qquad\PP-\text{a.s.}
\end{align}
which is continuous in both $t$ and $y$ and solves the SDE
\begin{align}
dU^y_t=-\frac{\delta_0}{\sigma}(1-2{Y}^y_t)U^y_tdW_t,\qquad U^y_0=1\:\:\PP-\text{a.s.}
\end{align}
Notice that the couple $({Y},U)$ forms a Markov process and that $U^y_t$ is an exponential local martingale. Moreover, since the process $Y$ is bounded, it is not difficult to see that Novikov condition holds and $U^y_t$ is indeed an exponential martingale. Finally we also remark here that $(Y,U)$ is a strong solution of a SDE and notice that, using the explicit representation \eqref{def:Xhat}, we also have 
\[\frac{\partial}{\partial x}X^{x,y}_t= X^{1,y}_t\quad\PP-\text{a.s.}.\]
\vspace{+3pt}

For all $(x,y)\in\RR\times[0,1]$ we set 
\begin{align}
\label{def:u}&u(x,y):=V(x,y)-(x-K),\\[+4pt]
\end{align}
and define the process $(P_{t})_{t\ge0}$ as
\begin{align}
P_{t}=e^{r t}u(X_{t},Y_{t})+\int_0^{t}e^{-rs}(rK-\delta_0 X_sY_s)ds\qquad \PP_{x,y}-\text{a.s.}
\end{align}
Then from the semi-harmonic characterisation of the value function provided in Theorem \ref{value}, we obtain for any $T>0$
\begin{align}
\label{Pmart}&(P_{t\wedge\gamma_*\wedge\tau_*})_{t\le T}\quad\text{is a $\PP_{x,y}$ martingale}\\[+4pt]
\label{Psub}&(P_{t\wedge\tau_*})_{t\le T}\quad\text{is a $\PP_{x,y}$ sub-martingale}\\[+4pt]
\label{Psup}&(P_{t\wedge\gamma_*})_{t\le T}\quad\text{is a $\PP_{x,y}$ super-martingale}.
\end{align}
For future reference we also introduce
\begin{align}
\label{tauK}&\tau_K(x,y):=\inf\{t\ge 0\,:\,X^{x,y}_t\le K\}
\end{align}
and denote by $\overline \CC'$ the closure of $\CC'$.

\begin{pro}\label{prop:C1}
The value function $V$ is $C^1$ in $\RR_+\times(0,1)$ (possibly with the exception of the point $(K,b^K_2)$ if $k>0$). Moreover $v_{yy}$ (see \eqref{vH}) is continuous on $\overline \CC'$ (possibly~on $\overline \CC'\setminus (z_K,\overline y_K)$ if $k>0$).
\end{pro}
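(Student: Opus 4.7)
My plan is to work primarily in the $(z,y)$ coordinates, where $v$ solves a non-degenerate parabolic equation, and to transfer the resulting $C^1$ regularity back to $V$ via the smooth change of variables; explicitly $V_x(x,y)=v_z(z(x,y),y)/x$ and $V_y(x,y)=v_y(z(x,y),y)+\tfrac{\sigma^2}{\delta_0 y(1-y)}v_z(z(x,y),y)$. Interior parabolic regularity already gives $v\in C^\infty(\CC')$, while on $\text{int}(\CS'_i)\cap\{x>K\}$ we have $v\equiv H_i$, which is $C^\infty$. The task therefore reduces to smooth-fit of $v_y$ and $v_z$ across the free boundaries $c_1$, $c_2$, whose continuity is ensured by Proposition \ref{b2continuous}.

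For smooth-fit of $v_y$, I would fix $(z_0,y_0)\in \partial\CS'_1$ with $y_0=c_1(z_0)$ (the case $\partial\CS'_2$ is symmetric). On the stopping side $\{y<y_0\}$ one has $v\equiv H_1$, so $v_y(z_0,y_0^-)=H_{1,y}(z_0,y_0)$; the inequality $v\ge H_1$, with equality at $(z_0,y_0)$, gives at once $v_y(z_0,y_0^+)\ge H_{1,y}(z_0,y_0)$. For the reverse inequality I would perturb the starting point to $(z_0,y_0+h)$, $h>0$, and apply the sub-martingale property \eqref{Psub} at the optimal $\tau_*^h:=\tau_*(z_0,y_0+h)$, which yields $v(z_0,y_0+h)\le \EE[e^{-r\tau_*^h}H_1(Z^{z_0}_{\tau_*^h},Y^{y_0+h}_{\tau_*^h})]$. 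Expanding the right-hand side by Dynkin's formula for $H_1$ (which is smooth on $\{x>K\}$) and subtracting $H_1(z_0,y_0)=v(z_0,y_0)$ gives
\[
v(z_0,y_0+h)-v(z_0,y_0)\le H_1(z_0,y_0+h)-H_1(z_0,y_0)+\EE\Bigl[\int_0^{\tau_*^h}\!e^{-rs}(\CG H_1-rH_1)(Z^{z_0}_s,Y^{y_0+h}_s)\,ds\Bigr].
\]
Dividing by $h$ and passing to the limit, the first summand converges to $H_{1,y}(z_0,y_0)$; the second is bounded by $C\,\EE[\tau_*^h]/h$, which tends to $0$ once one establishes $\EE[\tau_*^h]=o(h)$. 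This in turn follows from the a.s.\ convergence $\tau_*^h\to 0$ provided by Lemma \ref{lem:reg-b}, Lemma \ref{lem:hitC} and Proposition \ref{prop:convT}, combined with a standard small-ball estimate for the non-degenerate martingale $Y^{y_0+h}$ near $y_0\in(0,1)$.

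For continuity of $v_z$ on $\overline{\CC'}$ I would use an entirely analogous scheme, replacing $\tau_*^h$ by $\gamma_*^h$ at points of $\partial\CS'_2$ and \eqref{Psub} by \eqref{Psup}, now perturbing in the $z$-direction and exploiting monotonicity of $c_1$, $c_2$ (Corollary \ref{cor:bb}) to locate the stopping side. Once $v_y$ and $v_z$ are continuous on $\overline{\CC'}$, the PDE $\tfrac12(\delta_0/\sigma)^2 y^2(1-y)^2\,v_{yy}=rv-k\,v_z$ on $\CC'$ makes continuity of $v_{yy}$ on $\overline{\CC'}$ automatic, since $y^2(1-y)^2>0$ on $(0,1)$. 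The exceptional corner $(z_K,\overline y_K)$ when $k>0$ enters precisely here: Lemma \ref{lem:reg-b} excludes this corner, so the vanishing of hitting times required by the smooth-fit scheme fails there, and $v_z$, $v_{yy}$ may be discontinuous. Under the diffeomorphic change of variables this corresponds to the excluded point $(K,b^K_2)$ for $V$.

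The main obstacle is the quantitative bound $\EE[\tau_*^h]=o(h)$ (and the symmetric bound for $\gamma_*^h$) in the smooth-fit step: the qualitative convergence $\tau_*^h\to 0$ from Proposition \ref{prop:convT} is not sufficient by itself, and must be combined with a small-ball estimate that uses the non-degeneracy of the diffusion coefficient of $Y$ locally near $y_0\in(0,1)$ together with continuity of $c_1$. The same obstruction is responsible for the excluded corner $(z_K,\overline y_K)$ when $k>0$: the deterministic drift $Z^z_t=z+kt$ there conspires with the boundary geometry to prevent $\hat\gamma_*=0$ from holding $\PP_{z_K,\overline y_K}$-a.s., so the smooth-fit machinery genuinely breaks down at that single point.
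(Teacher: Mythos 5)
Your broad plan (work in $(z,y)$ coordinates, reduce to smooth-fit across $c_1,c_2$, then transfer to $V$ via the explicit chain rule) is sensible and your one-sided inequality $v_y(z_0,y_0^+)\ge H_{1,y}(z_0,y_0)$ from $v\ge H_1$ with equality at $(z_0,y_0)$ is correct. The genuine gap is in the ``reverse'' inequality. You reduce it to the claim $\EE[\tau_*^h]=o(h)$, and then assert this follows from the a.s.\ convergence $\tau_*^h\to 0$ together with a ``standard small-ball estimate.'' That implication is false. For a uniformly non-degenerate one-dimensional diffusion started at distance $h$ from a nearby (even flat, even moving at Lipschitz rate) boundary, the truncated expectation $\EE[\tau_*^h\wedge T]$ is of order $h$, not $o(h)$: by the reflection principle $\PP(\tau_*^h>t)\asymp h/\sqrt{t}$ for small $h$, so $\EE[\tau_*^h\wedge T]\asymp h\sqrt{T}$. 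Dividing by $h$ then leaves a constant, and the remainder term in your display does not vanish. Almost sure convergence of $\tau_*^h$ gives $\EE[\tau_*^h\wedge T]\to 0$ but carries no rate, and the rate it actually has is exactly the one your argument cannot afford. There is also a secondary issue that your display only controls the time up to $\tau_*^h$: you have silently suppressed the seller's stopping region $\CS_2'$ and the exit from $R_K$ (where $H_1$ fails to be $C^2$), both of which must be stopped at to apply Dynkin's formula.

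The paper circumvents this by inverting the order of limits. Rather than differencing $v$ across the boundary, it first forms the increment $u(x,y+\eps)-u(x,y)$ at a fixed interior point $(x,y)\in\CC$ (where $u$ is $C^\infty$ by interior parabolic regularity), uses the sub/super-martingale properties \eqref{Psub}--\eqref{Psup} with the stopping time $\lambda_\eps=\tau_*\wedge\gamma_\eps\wedge\tau^\eps_K\wedge T$, and sends $\eps\to 0$ to obtain a two-sided bound on the \emph{already existing} derivative $u_y(x,y)$ of the form
\[
0\ge u_y(x,y)\ge -C\,\EE\bigl[\mathds{1}_{\{\tau_K\wedge T<\tau_*\}}\cdots\bigr]-\EE\Bigl[\int_0^{\tau_*\wedge\tau_K\wedge T}\cdots\Bigr].
\]
Only then does it send $(x,y)\to(x_0,y_0)\in\partial\CS_1$. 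At this stage all that is needed is $\tau_*(x,y)\to 0$ $\PP$-a.s.\ (Proposition \ref{prop:convT}) together with dominated convergence; no quantitative rate is required because the integrand is an $L^1$-dominated random integral whose upper limit shrinks to zero. This is structurally weaker, and in fact correct, whereas your direct difference quotient across the boundary needs a rate that is not available. If you want to salvage your scheme you must replace the $o(h)$ claim by the paper's ``differentiate inside, then approach the boundary'' order of limits.

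Finally, for the continuity of $v_z$ you propose to ``perturb in the $z$-direction.'' Since $Z^z_t=z+kt$ is deterministic, a $z$-perturbation does not generate the diffusive hitting behaviour your scheme relies on, and the situation is not symmetric to the $y$-direction. The paper instead establishes continuity of $u_x$ in the $(x,y)$ variables (where $X$ is a genuine diffusion) and only afterwards transfers $C^1$ regularity to $(z,y)$; you should do the same.
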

\begin{proof}
The value function is $C^1$ inside the continuation set $\CC$ by simply recalling that $v\in C^1$ in $\CC'$ (see the free boundary problem \eqref{freeb1}--\eqref{freeb3}). Therefore we only need to prove the $C^1$ property across the optimal boundaries. We provide full details for the continuity of $u_y:=\partial u/\partial y$ as the continuity of $u_x:=\partial u/\partial x$ follows analogous arguments up to trivial modifications.

Let us start by looking at points of $\partial \CS_1$, i.e.~the boundary of the stopping region for the buyer. Let us fix $(x_0,y_0)\in\partial \CS_1$ and let us pick $(x,y)$ inside the continuation set $\CC\cap\{x>K\}$. Later we will take limits $(x,y)\to(x_0,y_0)$ and use Proposition \ref{prop:convT}. 

Denote by $\tau_*=\tau_*(x,y)$ the first entry time of $(X^{x,y},Y^{y})$ into $\CS_1$ and by $\gamma_\eps=\gamma_*(x,y+\eps)$ the first entry time of $(X^{x,y+\eps},Y^{y+\eps})$ into $\CS_2$ for some $\eps>0$.
From $(i)$ of Lemma \ref{Lipschitz} and \eqref{def:u} we know that $u(x,y+\eps)-u(x,y)\le0$ since $V$ is non-increasing in $y$. In order to find a lower bound for $u(x,y+\eps)-u(x,y)$ we want to use the semi-harmonic property of $(P_{t})_{t\ge 0}$. For that we introduce the stopping time
$\lambda_\eps:=\tau_*\wedge\gamma_\eps\wedge\tau^\eps_K\wedge T$ where $T>0$ is fixed and $\tau^\eps_K=\tau_K(x,y+\eps)$. Notice that since $X^{x,y+\eps}\le X^{x,y}$ (see \eqref{def:Xhat}) then $\tau_K(x,y+\eps)\le\tau_K(x,y)$. Now, using \eqref{Psub} and \eqref{Psup} we obtain
\begin{align}\label{u1}
u&(x,y+\eps)-u(x,y)\\
\ge&\EE\Big[e^{-r\lambda_\eps}u(X^{x,y+\eps}_{\lambda_\eps},Y^{y+\eps}_{\lambda_\eps})+\int_0^{\lambda_\eps}e^{-rt}(rK-\delta_0 X^{x,y+\eps}_tY^{y+\eps}_t)dt\Big]\nonumber\\
&-\EE\Big[e^{-r\lambda_\eps}u(X^{x,y}_{\lambda_\eps},Y^{y}_{\lambda_\eps})+\int_0^{\lambda_\eps}e^{-rt}(rK-\delta_0 X^{x,y}_tY^{y}_t)dt\Big].\nonumber
\end{align}   
Notice that $0\le u\le \eps$ on $[K,+\infty)\times[0,1]$ and 
\begin{align*}
\tau_*\le \gamma_\eps\wedge\tau^\eps_K\wedge T\implies u(X^{x,y}_{\lambda_\eps},Y^y_{\lambda_\eps})=0\le u(X^{x,y+\eps}_{\lambda_\eps},Y^{y+\eps}_{\lambda_\eps})\\[+4pt]
\gamma_\eps\le\tau_*\wedge \tau^\eps_K\wedge T\implies u(X^{x,y+\eps}_{\lambda_\eps},Y^{y+\eps}_{\lambda_\eps})=\eps\ge u(X^{x,y}_{\lambda_\eps},Y^y_{\lambda_\eps})
\end{align*}
so that 
\begin{align}\label{lb}
\text{on}\:\: \{\tau_*\wedge\gamma_\eps\le \tau^\eps_K\wedge T\}\:\:\text{we have}\:\: u(X^{x,y+\eps}_{\lambda_\eps},Y^{y+\eps}_{\lambda_\eps})\ge u(X^{x,y}_{\lambda_\eps},Y^y_{\lambda_\eps}).
\end{align}
Using this fact in \eqref{u1} we get
\begin{align*}
u&(x,y+\eps)-u(x,y)\\[+4pt]
\ge&\EE\Big[\mathds{1}_{\{\tau^\eps_K\wedge T<\tau_*\wedge\gamma_\eps\}}e^{-r(\tau^\eps_K\wedge T)}\Big(u(X^{x,y+\eps}_{\tau^\eps_K\wedge T},Y^{y+\eps}_{\tau^\eps_K\wedge T})- u(X^{x,y}_{\tau^\eps_K\wedge T},Y^y_{\tau^\eps_K\wedge T})\Big)\Big]\\[+4pt]
&+\delta_0\EE\Big[\int_0^{\lambda_\eps}e^{-rt}(X^{x,y}_tY^y_t-X^{x,y+\eps}_tY^{y+\eps}_t)dt\Big].
\end{align*}
Now we use that $X^{x,y+\eps}\le X^{x,y}$ (see \eqref{def:Xhat}) and that $x\mapsto u(x,y)$ is non-increasing (as shown in the proof of $(iii)$ and $(iv)$ of Lemma \ref{monotonic}). Therefore from the right-hand side of the above inequality we easily get
\begin{align}\label{u2}
u&(x,y+\eps)-u(x,y)\\[+4pt]
\ge& \EE\Big[\mathds{1}_{\{\tau^\eps_K\wedge T<\tau_*\wedge\gamma_\eps\}}e^{-r(\tau^\eps_K\wedge T)}\Big(u(X^{x,y}_{\tau^\eps_K\wedge T},Y^{y+\eps}_{\tau^\eps_K\wedge T})- u(X^{x,y}_{\tau^\eps_K\wedge T},Y^y_{\tau^\eps_K\wedge T})\Big)\Big]\nonumber\\[+4pt]
&+\delta_0\EE\Big[\int_0^{\lambda_\eps}e^{-rt}X^{x,y}_t(Y^y_t-Y^{y+\eps}_t)dt\Big].\nonumber
\end{align}
Lower bounds can be provided for both terms on the right-hand side of the above expression. For the first term we recall $(iii)$ of Lemma \ref{Lipschitz} and get 
\begin{align}\label{u3}
\EE&\Big[\mathds{1}_{\{\tau^\eps_K\wedge T<\tau_*\wedge\gamma_\eps\}}e^{-r(\tau^\eps_K\wedge T)}\Big(u(X^{x,y}_{\tau^\eps_K\wedge T},Y^{y+\eps}_{\tau^\eps_K\wedge T})- u(X^{x,y}_{\tau^\eps_K\wedge T},Y^y_{\tau^\eps_K\wedge T})\Big)\Big]\nonumber\\[+4pt]
\ge&-C\,\EE\Big[\mathds{1}_{\{\tau^\eps_K\wedge T<\tau_*\wedge\gamma_\eps\}}e^{-r(\tau^\eps_K\wedge T)}(1+X^{x,y}_{\tau^\eps_K\wedge T})\left(Y^{y+\eps}_{\tau^\eps_K\wedge T}-Y^{y}_{\tau^\eps_K\wedge T}\right)\Big]\nonumber\\[+4pt]
=&-\eps\, C\,\EE\Big[\mathds{1}_{\{\tau^\eps_K\wedge T<\tau_*\wedge\gamma_\eps\}}e^{-r(\tau^\eps_K\wedge T)}(1+X^{x,y}_{\tau^\eps_K\wedge T})\Delta Y^\varepsilon_{\tau^\eps_K\wedge T}\Big]\nonumber\\[+4pt]
\ge &-\eps\, C\,\EE\Big[\mathds{1}_{\{\tau^\eps_K\wedge T<\tau_*\}}e^{-r(\tau^\eps_K\wedge T)}(1+X^{x,y}_{\tau^\eps_K\wedge T})\Delta Y^\varepsilon_{\tau^\eps_K\wedge T}\Big]
\end{align}
where $\Delta Y^\varepsilon_t:=\frac{1}{\varepsilon}(Y^{y+\eps}_{t}-Y^{y}_{t})$, and the final inequality follows by observing that $\{\tau_*\wedge\gamma_\eps>\tau^\eps_K\wedge T\}\subseteq\{\tau_*>\tau^\eps_K\wedge T\}$ and that the quantity under expectation is positive. 
For the integral term in \eqref{u2} we argue in a similar way and obtain
\begin{align}\label{u4}
\EE\Big[\int_0^{\lambda_\eps}e^{-rt}X^{x,y}_t(Y^y_t-Y^{y+\eps}_t)dt\Big]=&-\eps\,\EE\Big[\int_0^{\lambda_\eps}e^{-rt}X^{x,y}_t\Delta Y^\varepsilon_t dt\Big]\nonumber\\[+4pt]
\ge& -\eps\,\EE\Big[\int_0^{\tau_*\wedge \tau^\eps_K\wedge T}e^{-rt}X^{x,y}_t\Delta Y^\varepsilon_t dt\Big]
\end{align}

Collecting \eqref{u2}, \eqref{u3} and \eqref{u4} we find
\begin{align*}
&\frac{u(x,y+\eps)-u(x,y)}{\eps}\\
&\ge- C\,\EE\Big[\mathds{1}_{\{\tau^\eps_K\wedge T<\tau_*\}}e^{-r(\tau^\eps_K\wedge T)}(1+X^{x,y}_{\tau^\eps_K\wedge T})\Delta Y^\varepsilon_{\tau^\eps_K\wedge T}\Big]-\EE\Big[\int_0^{\tau_*\wedge \tau^\eps_K\wedge T}e^{-rt}X^{x,y}_t\Delta Y^\varepsilon_t dt\Big]
\end{align*}
and we now aim at taking limits as $\eps\to0$. In order to apply dominated convergence, it is sufficient to prove that the family of random variables $(X^{x,y}_\tau, \Delta Y^\varepsilon_\tau)$ is uniformly bounded in $L^4$ when $\tau$ ranges through all $[0,T]$ valued stopping times and $\varepsilon \in (0,1-y)$. Indeed, by Cauchy-Schwarz inequality, this will imply that $X^{x,y}_\tau\cdot \Delta Y^\varepsilon_\tau$ is bounded in $L^2$ uniformly with respect to $\eps$ and $\tau$.
\vspace{+3pt}

The bound for $X$ follows directly from the explicit expression \eqref{def:Xhat}. Note then that $\Delta Y^{\varepsilon}$ is an exponential martingale. Indeed, denoting $H^\varepsilon_t=-\frac{\delta_0}{\sigma}(1-Y^y_t-Y^{y+\varepsilon}_t)$, we have
\[ \Delta Y^\varepsilon_t= \exp\left[ \int_0^t H^\varepsilon_s dW_s - \frac{1}{2}\int_0^t(H^\varepsilon_s)^2 ds \right] .\] 
It follows that 
\begin{align*}
(\Delta Y^\varepsilon_t)^4 &= \exp\left[ 4\int_0^t H^\varepsilon_s dW_s - 4\frac{1}{2}\int_0^t (H^\varepsilon_s)^2 ds \right] \\
&=\exp\left[ 6\int_0^t (H^\varepsilon_s)^2 ds \right] \exp\left[ \int_0^t 4 H^\varepsilon
_s dW_s - \frac{1}{2}\int_0^t (4 H^\varepsilon_s)^2 ds \right]
\end{align*}
Since $H^\varepsilon$ is uniformly bounded by $\frac{\delta_0}{\sigma}$, the second term in the above expression is a martingale, and we deduce that for any stopping time $\tau$ taking values in $[0,T]$
\[ \EE[ (\Delta Y^\varepsilon)^4_\tau ] \leq \exp( 6T\frac{\delta_0^2}{\sigma^2}).\]
\vspace{+3pt}

\noindent Using that $\Delta Y^\varepsilon_t \rightarrow U^y_t$ almost surely for all $t\ge0$, as $\eps\to0$, we conclude 
\begin{align}\label{b1}
0\ge u_y(x,y)\ge &- C\,\EE\Big[\mathds{1}_{\{\tau_K\wedge T<\tau_*\}}e^{-r(\tau_K\wedge T)}(1+X^{x,y}_{\tau_K\wedge T})U^{y}_{\tau_K\wedge T}\Big]\\
&-\EE\Big[\int_0^{\tau_*\wedge \tau_K\wedge T}e^{-rt}X^{x,y}_tU^{y}_t dt\Big].\nonumber
\end{align}
In the above estimate we have used that 
\begin{align}
\tau^\eps_K\uparrow\tau_K \;\text{and} \;  \mathds{1}_{\{\tau^\eps_K\wedge T<\tau_*\}} \rightarrow \mathds{1}_{\{\tau_K\wedge T<\tau_*\}} \qquad\text{as $\eps\to 0$},
\end{align} 
which follows from the continuity of $(x,y)\rightarrow X^{x,y}$ and  the fact that $\PP(\tau_*=\tau_K)=0$ (see Proposition \ref{inclusionregions}).

Notice that the above estimates also imply that $U^y_\tau$ is bounded in $L^4$ and $X^{x,y}_\tau \cdot U^y_\tau$ is bounded in $L^2$, uniformly with respect to stopping times $\tau\in [0,T]$ and $(x,y) \in [K,x_0+1]\times (0,1)$. 

It remains to take limits as $(x,y)\to (x_0,y_0)$ with $(x,y)\in\CC$. 
By continuity of the sample paths $\tau_*(x,y)=\hat \tau_*(x,y)$ for $(x,y)\in\CC$. We use $(i)$ of Proposition \ref{prop:convT}, dominated convergence and \eqref{b1} (along with the fact that $\PP_{x_0,y_0}(\tau_K>0)=1$) to obtain
\begin{align}
\lim_{(x,y)\to(x_0,y_0)}u_y(x,y)=0.
\end{align}
The latter implies continuity of $u_y$ at $\partial\CS_1$.

To prove that $u_y$ is also continuous across $\partial \CS_2$ we need to argue in a slightly different way. Fix $(x_0,y_0)\in\partial \CS_2$ with $y_0<b^K_2$ and pick $(x,y)\in\CC$. With no loss of generality we consider $x_0=b_2(y_0)$ as the proof requires minor changes for $x_0=K$. We set $\gamma_*=\gamma_*(x,y)$ the first entry time of $(X^{x,y},Y^{y})$ into $\CS_2$ and denote by $\tau_\eps=\tau_*(x,y-\eps)$ the first entry time of $(X^{x,y-\eps},Y^{y-\eps})$ into $\CS_1$ for some $\eps>0$. Then we define $\eta_\eps:=\tau_\eps\wedge\gamma_*\wedge \tau_K\wedge T$ for some $T>0$. Again we recall that $\tau_K=\tau_K(x,y)\le\tau_K(x,y-\eps)$.

We know that $u(x,y)-u(x,y-\eps)\le0$ from $(i)$ of Lemma \ref{Lipschitz} and \eqref{def:u}. In order to find a lower bound we use \eqref{Psub} and \eqref{Psup} and get
\begin{align*}
u&(x,y)-u(x,y-\eps)\\
\ge&\EE\Big[e^{-r\eta_\eps}u(X^{x,y}_{\eta_\eps},Y^{y}_{\eta_\eps})+\int_0^{\eta_\eps}e^{-rt}(rK-\delta_0 X^{x,y}_tY^{y}_t)dt\Big]\nonumber\\
&-\EE\Big[e^{-r\eta_\eps}u(X^{x,y-\eps}_{\eta_\eps},Y^{y-\eps}_{\eta_\eps})+\int_0^{\eta_\eps}e^{-rt}(rK-\delta_0 X^{x,y-\eps}_tY^{y-\eps}_t)dt\Big].\nonumber
\end{align*}   
From this point onwards we can repeat the arguments used above up to trivial modifications. These allow us to conclude that $u_y$ is continuous across $\partial \CS_2$ with the possible exception of $(K,b^K_2)$, because Proposition \ref{prop:convT} does not hold at that point if $k>0$. 

As already mentioned, analogous arguments allow to prove that $u_x$ is also continuous everywhere with the possible exception of $(K,b^K_2)$.
It follows that $V\in C^1$ on $(\RR_+\times(0,1))\setminus(K,b^K_2)$ and $v\in C^1$ on $(\RR\times(0,1))\setminus (z_K,\overline y_K)$ (see \eqref{vH}). The latter and \eqref{freeb1} imply that $v_{yy}$ is continuous on $\overline\CC'\setminus (z_K,\overline y_K)$ as claimed.
\end{proof}

It remains to prove Lemma \ref{lem:reg-b} and for that it is convenient to change variables to the coordinate system $(z,y)$. We set
\begin{align}\label{def:w}
w(z,y)=u(F(z,y),y)
\end{align}
with the notation $w_z:=\partial w/\partial z$, $w_y:=\partial w/\partial y$ and $w_{yy}:=\partial^2 w/\partial y^2$. In these variables $\tau_K$ from \eqref{tauK} reads
\begin{align*}
\tau_K(z,y)=\inf\{t\ge 0\,:\,F(Z^z_t,Y^y_t)\le K\}.
\end{align*}
Notice that for $k > 0$ the boundary $c_1$ is non-decreasing and the stopping set $\CS_1'$ lies below it. Hence \eqref{regS1} is a consequence of standard arguments involving the law of iterated logarithm. Showing \eqref{regS2} for $k > 0$ is instead more difficult because $c_2$ is also non-decreasing but $\CS_2'$ lies above the boundary. A symmetric situation occurs for $k<0$. 

In what follows we first show that the classical smooth-fit condition holds and then prove that under our assumptions this implies Lemma \ref{lem:reg-b}. In the next lemma we only consider smooth-fit in those cases when the monotonicity of the boundary does not allow a direct proof of \eqref{regS1} or \eqref{regS2} based on the law of iterated logarithm.
\begin{lem}\label{lem:sm-f}
If $(z_0,y_0)\in \partial \CS'_1$ and $k<0$, then $w_y(z_0,y_0+)=0$. Analogously if $(z_0,y_0)\in \partial \CS'_2$, with $z_0<z_K$ and $y_0=c_2(z_0)$, and $k > 0$ then $w_y(z_0,y_0-)=0$. Finally, if $(z_0,y_0)\in \partial \CS'_2$, with $y_0=y_K(z_0)$ and $k < 0$ then $v_y(z_0,y_0+)=0$. 
\end{lem}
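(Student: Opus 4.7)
My plan is to establish each of the three smooth-fit identities by a contradiction-based perturbation in the $y$-variable, combining (a) the sub/super-martingale characterisation of $v$ from Theorem \ref{value}, (b) the interior regularity $v \in C^\infty(\CC')$ together with the PDE $(\CG - r)v = 0$ in $\CC'$, and (c) the continuity and monotonicity of the boundaries $c_1, c_2, y_K$ from Proposition \ref{b2continuous} and Corollary \ref{cor:bb}. In every case the trivial one-sided derivative vanishes automatically because $w \equiv 0$ on $\CS'_1$, $w \equiv \varepsilon_0$ on $\CS'_2$, or $v \equiv \varepsilon_0$ on the part of $\partial \CS'_2$ where $F = K$; the task is therefore to rule out a strictly non-zero one-sided derivative on the continuation-region side.

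I focus on claim (i); the other two follow by the same scheme after interchanging roles. Fix $(z_0, y_0) \in \partial \CS'_1$ with $k < 0$, so that $y_0 = c_1(z_0)$, and suppose for contradiction that $\limsup_{\varepsilon \downarrow 0} \varepsilon^{-1} w(z_0, y_0 + \varepsilon) \geq \ell > 0$. Invoking the martingale property of $e^{-rt} v(Z_t, Y_t)$ up to the exit time $\rho$ of $(Z, Y)$ from $\CC'$ (Theorem \ref{value}) and It\^o's formula applied to $e^{-rt} H_1(Z_t, Y_t) = e^{-rt}(F(Z_t, Y_t) - K)$, together with the identity $\CG F - rF = -\delta_0 y F$ (obtained by matching the It\^o decomposition of $F(Z, Y)$ with the SDE for $X$), I derive the fundamental identity
\begin{equation*}
w(z_0, y_0 + \varepsilon) = \EE\bigl[e^{-r\rho} w(Z_\rho, Y_\rho)\bigr] + \EE\Big[\int_0^{\rho} e^{-rs}\bigl(rK - \delta_0 Y_s F(Z_s, Y_s)\bigr)\,ds\Big].
\end{equation*}
Next, choose $\rho$ as the exit time of $Y$ from $(y_0 - \varepsilon, y_0 + 2\varepsilon)$ truncated at the deterministic cap $\varepsilon^{3/2}$, so that $|Z_\rho - z_0| \leq |k|\varepsilon^{3/2}$ pointwise. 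The integral remainder is then $O(\varepsilon^{3/2})$; by continuity of $c_1$ at $z_0$ one has $c_1(Z_\rho) \geq y_0 - \varepsilon$ for $\varepsilon$ small on the event $\{Y_\rho = y_0 - \varepsilon\}$, so $(Z_\rho, Y_\rho) \in \CS'_1$ and $w$ vanishes there; on the complementary events one controls $w(Z_\rho, Y_\rho)$ via the Lipschitz bound of Lemma \ref{Lipschitz} and the smallness of the corresponding exit probabilities. Dividing by $\varepsilon$ and letting $\varepsilon \downarrow 0$ forces $\ell \leq 0$, a contradiction.

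Claims (ii) and (iii) require only minor adaptations: for (ii) one perturbs to $(z_0, y_0 - \varepsilon) \in \CC'$ below the lower edge of $\CS'_2$ and exploits $w \equiv \varepsilon_0$ on $\CS'_2$; for (iii) the perturbation is upward into $R_K^c$, where $H_1 \equiv 0$, and the argument is carried out directly on $v$ rather than on $w$. The main obstacle, common to all three cases, is that the sign of $k$ combined with the monotonicity of $c_1$, $c_2$, or $y_K$ makes the relevant stopping boundary recede from the perturbed starting point under the deterministic flow $Z_t = z_0 + kt$, which is precisely why the law of iterated logarithm cannot be invoked directly as it is for the easy direction of Lemma \ref{lem:reg-b}. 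The quantitative heart of the proof is thus a separation of scales: by choosing $\rho$ so that $|Z_\rho - z_0| = o(\varepsilon)$ pointwise, the deterministic drift of the boundary becomes negligible compared to the spatial scale $\varepsilon$ of the perturbation, and—despite the adverse monotonicity—the diffusion $Y$ still lands inside the receding stopping region with high probability. Making this precise using only the continuity (not the H\"older regularity) of $c_1, c_2, y_K$, and controlling the complementary-event contribution uniformly in $\varepsilon$, constitutes the technical core of the proof.
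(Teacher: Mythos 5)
Your strategy — a one-sided perturbation in $y$ combined with a stopping time whose time-cap and $y$-window are tuned against each other — is genuinely different from the paper's, which fixes the small rectangle $R_\varepsilon$, freezes the $z$-slice at $z_0$, applies the It\^o--Tanaka formula to $w(z_0,\cdot)$, and then sends $t\to 0$ with $\varepsilon$ fixed, deriving a contradiction from the scaling mismatch $\EE[L^{y_0}_{t\wedge\rho_\varepsilon}]\gtrsim \EE[(t\wedge\rho_\varepsilon)^{(1+p)/2}]\gg\EE[t\wedge\rho_\varepsilon]$. The strength of that local-time comparison is that no modulus of continuity of the free boundary ever enters: the $z$-motion of the boundary is absorbed into the harmless term $C_\varepsilon\,\EE[t\wedge\rho_\varepsilon]$, and the contradiction comes purely from the scaling of local time in the $y$-direction.

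Your argument, by contrast, has a genuine gap precisely at the point you flag as its technical core. You need the stopped point $(Z_\rho,Y_\rho)$ to lie in $\CS_1'$ on the event $\{Y_\rho = y_0-\varepsilon\}$, which requires $c_1(z_0)-c_1(z_0-|k|\,\mathrm{cap})\le\varepsilon$, i.e.\ $\omega_{c_1}(|k|\,\mathrm{cap})\le\varepsilon$. Simultaneously you need the time cap $\gg\varepsilon^2$ so that $Y$ actually exits the $\varepsilon$-window with non-vanishing probability, and the cap $\ll\varepsilon$ so that the $\int_0^\rho(rK-\delta_0 Y_s F)\,ds$ term is $o(\varepsilon)$. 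Putting these together forces $\omega_{c_1}(|k|s)\le\varepsilon$ for some $s$ with $\varepsilon^2\ll s\ll\varepsilon$, i.e.\ a H\"older-type rate of order at least $1/2$ for $c_1$, which is not available at this stage: Proposition~\ref{b2continuous} only gives plain continuity. A modulus such as $\omega_{c_1}(s)\sim 1/\log(1/s)$ is compatible with everything proved so far, and for it your claim ``$c_1(Z_\rho)\ge y_0-\varepsilon$ by continuity'' fails, since $\omega_{c_1}(|k|\varepsilon^{3/2})\sim 1/\log(1/\varepsilon)\gg\varepsilon$. There is a second, more minor issue: the ``fundamental identity'' with equality requires $\rho$ to be dominated by the exit time of $\CC'$, which your $\rho$ (an exit time of $Y$ from a $y$-interval, capped) does not guarantee; on the event where the process has already crossed into $\CS_1'$ strictly before $Y$ reaches $y_0-\varepsilon$, the martingale property no longer applies and the identity breaks. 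You would need to replace the $Y$-exit by the $\CC'$-exit and then control $w$ at the deterministic cap, but this reopens the same rate-of-continuity difficulty. The paper's It\^o--Tanaka route is designed to avoid exactly this obstruction, and I do not see how to close your argument without supplying a boundary regularity estimate that the paper deliberately does not use.
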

\begin{proof}
We carry out the proof under the assumption of $k> 0$ (see \eqref{def:k}). This induces no loss in generality as symmetric arguments hold for $k<0$.

Let $(z_0,y_0)\in \partial \CS'_2$ with $y_0=c_2(z_0)$.  Notice that for $y\in (y_0,y_K(z_0))$ we have $w_y(z_0,y)=0$. Also we know from the proof of $(i)$ in Lemma \ref{connectednessagain} that $w_y\ge0$ locally at $(z_0,y_0)$. We argue by contradiction and assume $w_y(z_0,y_0-)\ge\lambda_0>0$. The latter limit exists because $w_z$ is locally bounded (see \eqref{bWt}) and $|w_{yy}|\le c |w_z|$ in $\CC'$ due to \eqref{freeb1}, for a suitable $c>0$.

Fix $\eps>0$, consider the open rectangle $R_\eps:=(z_0,z_0+\eps)\times(y_0-\eps,y_0+\eps)$ and let $\rho_\eps=\inf\{t\ge0\,:\,(Z^{z_0}_t,Y^{y_0}_t)\notin R_\eps \}$. With no loss of generality we assume $\rho_\eps\le\tau_K\wedge\tau_*$ and from \eqref{Psub} we obtain
\begin{align}
w(z_0,y_0)\le \EE\left[e^{-r(t\wedge\rho_\eps)}w(Z^{z_0}_{t\wedge\rho_\eps},Y^{y_0}_{t\wedge\rho_\eps})+\int_0^{t\wedge\rho_\eps}e^{-r s}(rK-\delta_0 Y^{y_0}_sF(Z^{z_0}_s,Y^{y_0}_s))ds\right].
\end{align}
Since $w(\,\cdot\,,y)$ is non-increasing (see \eqref{bWt}) and $R_\eps$ is bounded we can find a constant $C_\eps>0$ depending on $R_\eps$ and such that 
\begin{align}
w(z_0,y_0)\le \EE\left[e^{-r(t\wedge\rho_\eps)}w(z_0,Y^{y_0}_{t\wedge\rho_\eps})+C_\eps (t\wedge\rho_\eps)\right].
\end{align}
Recalling that $w_{yy}(z_0,\cdot)$ is bounded on $[y_0-\eps,y_0+\eps]\setminus\{y_0\}$, we can apply It\^o-Tanaka formula to get
\begin{align}
w(z_0,y_0)\le & w(z_0,y_0)+\EE\left[\int_0^{t\wedge\rho_\eps}e^{-rs}\frac{\delta^2_0}{2\sigma^2}\left[Y^{y_0}_s(1-Y^{y_0}_s)\right]^2 w_{yy}(z_0,Y^{y_0}_s)\mathds{1}_{\{Y^{y_0}_s\neq y_0\}}ds\right]\nonumber\\
&+\EE\left[\frac{1}{2}\int_0^{t\wedge\rho_\eps}e^{-rs}(w_y(z_0,y_0+)-w_y(z_0,y_0-))dL^{y_0}_s(Y^{y_0})+C_\eps (t\wedge\rho_\eps)\right].
\end{align}
Boundedness of $w_{yy}(z_0,\,\cdot\,)$ and the assumption $w_y(z_0,y_0-)\ge\lambda_0$ give
\begin{align}\label{vil00}
0\le & -\frac{1}{2}\lambda_0\EE\left[\int_0^{t\wedge\rho_\eps}e^{-rs}dL^{y_0}_s(Y^{y_0})\right]+C'_\eps \EE\left[(t\wedge\rho_\eps)\right]
\end{align}
for some positive $C'_\eps>0$. For $0<p<1$, Burkholder-Davis-Gundy inequality and some algebra give
\begin{align}
\EE\left[\int_0^{t\wedge\rho_\eps}e^{-rs}dL^{y_0}_s(Y^{y_0})\right]\ge& \EE\left[e^{-rt}L^{y_0}_{t\wedge\rho_\eps}(Y^{y_0})\right]\nonumber\\
=&e^{-rt}\EE\left[|Y^{y_0}_{t\wedge\rho_\eps}-y_0|\right]\ge \frac{e^{-rt}}{(2\eps)^p}\EE\left[|Y^{y_0}_{t\wedge\rho_\eps}-y_0|^{1+p}\right]\nonumber\\
\ge& \frac{e^{-rt}}{(2\eps)^p} c_p\EE\left[\langle Y^{y_0}\rangle^{\frac{1+p}{2}}_{t\wedge\rho_\eps}\right]\ge\frac{e^{-rt}}{(2\eps)^p} c_{p,\eps}\EE\left[(t\wedge\rho_\eps)^{\frac{1+p}{2}}\right],
\end{align}
with $c_{p,\eps}>0$ depending on $p$ and $\eps$.
Plugging the latter inside \eqref{vil00} and letting $t\to 0$ we reach a contradiction. Therefore it must be $w_{y}(z_0,y_0-)=0$. 

The proof is entirely analogous for $(z_0,y_0)\in\partial\CS'_1$ and $k<0$. It is also worth noticing that for $(z_0,y_0)\in\partial\CS'_2$ with $y_0=y_K(z_0)$, the smooth-fit condition amounts to $v_y(z_0,y_0+)=0$ because the stopping payoff is $\eps_0$. Using $(iii)$ in Lemma \ref{connectednessagain} and arguments similar to those above we can prove that $v_y(z_0,y_0+)=0$ holds.
\end{proof}

\begin{proof}[Proof of Lemma \ref{lem:reg-b}]
Here we only consider the case $k > 0$ but the same results hold for $k<0$ and these can be proven by symmetric arguments. It is convenient to recall the function $w$ from \eqref{def:w}.

The proof of \eqref{regS1}, which we omit for brevity, is a straightforward consequence of the fact that $c_1$ is non-decreasing and $Y_t$ is non-degenerate away from $0$ and $1$, so that the law of iterated logarithm can be applied. The same rationale allows to prove that \eqref{regS2} holds for $y_0=y_K(z_0)$ for $z_0<z_K$.

To prove \eqref{regS2} with $y_0=c_2(z_0)$ and $z_0<z_K$ let us argue by contradiction and assume that $(x_0,y_0)\in\partial \CS_2\cap \{x>K\}$ is not regular or equivalently $(z_0,y_0)\in\partial \CS'_2\cap R_K$ is not regular (with $F(z_0,y_0)=x_0$), i.e.~\eqref{regS2} does not hold. Pick $y<y_0$ and $\eps>0$ such that $y+\eps<y_0$. Denote $\hat{\gamma}_\eps=\hat{\gamma}_*(z_0,y+\eps)$, $\hat{\tau}=\hat{\tau}_*(z_0,y)$, $\hat \gamma=\hat \gamma_*(z_0,y)$, $\tau^\eps_K=\tau_K(z_0,y+\eps)$. Notice that $\tau_K(z_0,y)\ge\tau_K(z_0,y+\eps)$, then from \eqref{Psub} and \eqref{Psup} and setting $\lambda_\eps:=\hat{\tau}\wedge\hat{\gamma}_\eps\wedge\tau^\eps_K\wedge T$ we obtain
\begin{align}\label{fatou1}
w(z_0,y+\eps)-w(z_0,y)\ge &\EE\left[\int_0^{\lambda_\eps}e^{-rt}\delta_0\left(Y^{y}_tF(Z^{z_0}_t,Y^{y}_t)-Y^{y+\eps}_tF(Z^{z_0}_t,Y^{y+\eps}_t)\right)dt\right]\nonumber\\
&+\EE\left[e^{-r\lambda_\eps}\left(w(Z^{z_0}_{\lambda_\eps},Y^{y+\eps}_{\lambda_\eps})-w(Z^{z_0}_{\lambda_\eps},Y^{y}_{\lambda_\eps})\right)\right]\nonumber\\
\ge &\EE\left[\int_0^{\lambda_\eps}e^{-rt}\delta_0\left(Y^{y}_tF(Z^{z_0}_t,Y^{y}_t)-Y^{y+\eps}_tF(Z^{z_0}_t,Y^{y+\eps}_t)\right)dt\right]
\end{align}
where in the last inequality we have used that $y\mapsto w(z,y)$ is non-decreasing as shown in the proof of Lemma \ref{connectednessagain}. 
Recall that $\frac{\partial}{\partial y}(yF(z,y))$ is strictly negative (see \eqref{h}) so that almost surely and for all $\eps>0$ we have
\begin{align*}
\int_0^{\lambda_\eps}e^{-rt}\delta_0\left(Y^{y}_tF(Z^{z_0}_t,Y^{y}_t)-Y^{y+\eps}_tF(Z^{z_0}_t,Y^{y+\eps}_t)\right)dt\ge 0.
\end{align*}

As in the proof of Proposition \ref{prop:C1} we have  $\tau^\eps_K\uparrow \tau_K$ as $\eps\to 0$. Moreover $\hat\gamma_\eps$ increases\footnote{Notice that, due to the geometry of $\CS'_2$, $(Z^{z_0},Y^{y_0})$ can only enter $\CS'_2$ by hitting $c_2$.} as $\eps\to 0$, hence $\hat\gamma_-:=\lim_{\eps\to 0}\hat \gamma_\eps\le \hat \gamma$, $\mathbb{P}$-a.s. To prove the reverse inequality we fix $\omega\in\Omega$ and pick $\delta>0$ such that $\hat\gamma(\omega)>\delta$. Then in particular we have 
\begin{align}\label{lsc1}
\inf_{0\le t\le \delta}(c_2(Z^{z_0}_t)-Y^{y}_t)(\omega)\ge c_\delta(\omega)>0
\end{align}
for some $c_\delta$. Recall that $(t,y)\mapsto U^{y}_t(\omega)$ is continuous, hence bounded on $[0,\delta]\times[0,1]$ by a constant $c'_\delta(\omega)>0$. Using that $|Y^{y+\eps}_t-Y^{y}_t|(\omega)\le c'_\delta(\omega)\cdot\eps$ we find  
\begin{align*}
\inf_{0\le t\le \delta}(c_2(Z^{z_0}_t)-Y^{y+\eps}_t)(\omega)\ge c_\delta(\omega)-c'_{\delta}(\omega)\cdot\eps 
\end{align*}
from \eqref{lsc1}. This implies that for all $\eps$ sufficiently small $\hat\gamma_\eps(\omega)> \delta$. Since $\delta$ was arbitrary we conclude $\lim_{\eps\to 0}\hat \gamma_\eps(\omega)= \hat \gamma(\omega)$. The argument holds for a.e.~$\omega\in\Omega$ hence we obtain 
\[
\lim_{\eps\to0}\hat \gamma_\eps=\hat\gamma,\qquad \mathbb{P}-\text{a.s.}
\]
Convergence of $\hat \gamma_\eps$ and $\tau^\eps_K$ imply 
\[
\lim_{\eps\to0}\lambda_\eps = \hat \tau\wedge\hat\gamma\wedge\tau_K\wedge T,\qquad\mathbb{P}-\text{a.s.}
\]

Dividing \eqref{fatou1} by $\eps$ and taking limits as $\eps\to 0$, we may use Fatou's theorem and the expression \eqref{h} for $\tfrac{\partial}{\partial y}(y F(z,y))$ to obtain 
\begin{align}\label{lims}
w_y(z_0,y)\ge -\delta_0\EE\left[\int_0^{T\wedge\hat{\tau}\wedge\hat{\gamma}\wedge\tau_K} e^{-rt}U^y_tF(Z^{z_0}_t,Y^{y}_t)\left(\frac{1-Y^y_t-\tfrac{\sigma^2}{\delta}}{1-Y^y_t}\right)dt\right].
\end{align}
Now we let $y\uparrow y_0$ and use that $\mathbb{P}$-a.s.~the following limits hold
\begin{align*}
&\hat{\gamma}_*(z_0,y)\downarrow\hat{\gamma}^+_*(z_0,y_0)\ge \hat{\gamma}_*(z_0,y_0),\\[+4pt]  
&\hat{\tau}_*(z_0,y)\uparrow \hat{\tau}_*(z_0,y_0) \quad\text{and}\quad \tau_K(z_0,y)\downarrow \tau_K(z_0,y_0).  
\end{align*}
In particular we notice that for the convergence of $\hat\tau_*$ we can use the same arguments as those used above for the convergence of $\hat\gamma_\eps$. Clearly 
\[\PP(\tau_K(z_0,y_0)>0)=\PP(\hat{\tau}_*(z_0,y_0)>0)=1, \]
and by assumption, $\PP(\hat{\gamma}_*(z_0,y_0)>0)>0$.
Using again Fatou's lemma, taking limits in \eqref{lims} the stopping time $\theta(z_0,y):=(\hat{\tau}\wedge\hat{\gamma}\wedge\tau_K)(z_0,y)$ converges to a stopping time $\theta(z_0,y_0)>0$, $\PP$-a.s. Hence $w_y(z_0,y_0-)>0$, which contradicts the smooth-fit principle proven in Lemma \ref{lem:sm-f}. In conclusion $(z_0,y_0)$ must be regular for $\CS'_2$, i.e.~\eqref{regS2} holds.
\end{proof}

\section{Existence of a Nash equilibrium}
Building on the results of the previous sections, we can prove the existence of a Nash equilibrium for our game with incomplete information. We recall here that the two main difficulties for such existence arise from the lack of uniform integrability of the stopping payoffs and the fact that the problem is bi-dimensional. In the rest of this section we make the next standing assumption.
\begin{ass}\label{ass:1}
We assume $\tfrac{\sigma^2}{\delta_0}> 1$.
\end{ass}

The next result will allow us to circumvent the lack of uniform integrability and it shows that the boundary $c_1$ of $\CS'_1$ is always strictly positive. 
\begin{lem}\label{boundedsupport}
For every $z \in \RR$ we have $c_1(z)>0$.
\end{lem}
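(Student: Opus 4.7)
The goal is to show that for every $z\in\RR$ there exists $y_0>0$ with $(z,y_0)\in\CS'_1$; by Corollary~\ref{cor:bb} this is equivalent to $c_1(z)\ge y_0>0$. In the original coordinates the requirement is $V(F(z,y_0),y_0)=G_1(F(z,y_0))=F(z,y_0)-K$. The crucial role of Assumption~\ref{ass:1} is seen through
\[
y\,F(z,y)\;=\;e^{z}\,y^{1-\sigma^2/\delta_0}(1-y)^{\sigma^2/\delta_0},
\]
which under $\sigma^2/\delta_0>1$ diverges to $+\infty$ as $y\downarrow 0$. Since $(\CL-r)G_1(x,y)=rK-\delta_0 xy$ on $\{x>K\}$, this says that $G_1$ becomes an increasingly strict supersolution along the curve $y\mapsto(F(z,y),y)$ as $y\downarrow 0$.

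The plan is to prove the quantitative statement: there exists a constant $C>0$ (depending only on the model parameters and $\varepsilon_0$) such that $V(x,y)=G_1(x)$ on $\{xy\ge C\}\cap\{x>K\}$. Once established, the identity above provides, for each $z\in\RR$, some $y_0>0$ with $y_0\,F(z,y_0)\ge C$, whence $(F(z,y_0),y_0)\in\CS_1$ and $c_1(z)\ge y_0>0$ as required.

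For the quantitative statement I would first use the upper bound $V\le V^{\mathrm{Am}}$, where $V^{\mathrm{Am}}(x,y):=\sup_{\tau}\EE[e^{-r\tau}(X_\tau-K)^+]$ is the value of the pure American call (corresponding to player~2 choosing $\gamma=+\infty$, cf.~the convention \eqref{gameformulationtwo}). Setting $\tau_A:=\inf\{t\ge0:X_tY_t\le rK/\delta_0\}$ and $\tau_K:=\inf\{t\ge0:X_t\le K\}$, It\^o's formula shows that on $[0,\tau_A\wedge\tau_K]$ the process
\[
e^{-rt}(X_t-K)+\int_0^{t}e^{-rs}(\delta_0 X_sY_s-rK)\,ds
\]
is a local martingale with non-negative compensator. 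Optional sampling yields $\EE[e^{-r(\tau\wedge\tau_A\wedge\tau_K)}(X_{\tau\wedge\tau_A\wedge\tau_K}-K)]\le x-K$ for every stopping time $\tau$. For $\tau$ extending beyond $\tau_A\wedge\tau_K$, one invokes the strong Markov property at $\tau_A\wedge\tau_K$ together with the trivial bound $V^{\mathrm{Am}}\le X$ (via \eqref{op-sam}) to control the residual contribution $\EE[e^{-r(\tau_A\wedge\tau_K)}V^{\mathrm{Am}}(X_{\tau_A\wedge\tau_K},Y_{\tau_A\wedge\tau_K})\indic_{\{\tau>\tau_A\wedge\tau_K\}}]$, which is to be absorbed into the accumulated drift $\EE\big[\int_0^{\tau_A\wedge\tau_K}e^{-rs}(\delta_0 X_sY_s-rK)\,ds\big]$ once the initial product $xy$ is sufficiently large.

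The main technical obstacle is precisely this last step, because $(X,Y)$ may re-enter the region $A_1^c$ after exiting it, so that a single localization does not close the bound. To overcome this I would exploit the explicit dynamics of $P_t:=X_tY_t$: a direct It\^o computation gives $dP_t/P_t=(r-\delta_0)\,dt+\beta(Y_t)\,dW_t$ with $\beta(y):=(\sigma^2-\delta_0(1-y))/\sigma$ bounded, so that $P$ is a geometric diffusion with deterministic drift $r-\delta_0$ and bounded volatility coefficient. Standard moment and return-time estimates for $P$ then dominate the residual term by a quantity of smaller order than $xy$ as $xy\to\infty$; choosing $C$ large enough in terms of these estimates closes the bound and yields $V=G_1$ on $\{xy\ge C\}\cap\{x>K\}$, completing the proof.
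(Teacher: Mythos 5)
Your intuition about the mechanism is the same as the paper's: Assumption~\ref{ass:1} enters through the divergence of $y\mapsto yF(z,y)$ as $y\downarrow0$, which reflects that $G_1$ is an increasingly strict $r$-superharmonic function along the relevant curve. However, you take a genuinely different route — you try to prove the \emph{uniform} statement that $\CS_1\supseteq\{xy\ge C\}\cap\{x>K\}$ for a single constant $C$, rather than the pointwise statement $c_1(z)>0$ — and this route is not actually closed.

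The gap is exactly where you flag it, and it is real, not cosmetic. The inequality $\EE_{x,y}[e^{-r(\tau\wedge\tau_A\wedge\tau_K)}(X_{\tau\wedge\tau_A\wedge\tau_K}-K)]\le x-K$ is fine, but the residual term $\EE_{x,y}[e^{-r(\tau_A\wedge\tau_K)}V^{\mathrm{Am}}(X_{\tau_A\wedge\tau_K},Y_{\tau_A\wedge\tau_K})\indic_{\{\tau>\tau_A\wedge\tau_K\}}]$ is not controlled by the accumulated drift in any obvious way. On the event $\{\tau_A<\tau_K\}$ the process sits on $\{XY=rK/\delta_0\}$, from which $(X,Y)$ can re-enter $A_1$ and $X_{\tau_A}$ can be arbitrarily large; the only generic bound is $V^{\mathrm{Am}}\le X$, which gives a residual of the \emph{same} order $x$ as the left-hand side rather than a lower-order correction. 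Said differently, your quantitative claim is equivalent to showing that $\{XY=rK/\delta_0,\,X>K\}$ is already inside the American exercise region for every $y$, which is itself as hard as the claim you are trying to prove — the induction does not terminate. You acknowledge this and gesture at ``moment and return-time estimates for $P$'', but those are not supplied, and with the drift of $\log P$ being $r-\delta_0-\tfrac12\beta(Y)^2$ (sign-indefinite in the parameters), it is not clear they close the bound. Moreover, the uniform statement is strictly stronger than what the lemma asserts; the paper never claims it, and it is not needed.

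The paper instead argues by contradiction, and the contradiction hypothesis $c_1(z_0)=0$ is what makes the argument work: it delivers a full strip $\{z_0-h\le Z\le z_0\}$ disjoint from $\CS'_1$, so the submartingale property of $e^{-r(t\wedge\tau_*)}V$ can be exploited over a \emph{fixed} deterministic time $h$ (for $k>0$; for $k<0$ one even has $\tau_*=+\infty$), giving $V(x,y)\le F(z_0-h,y)\,\EE_y\bigl[e^{-\delta_0\int_0^hY_s\,ds}M_h\bigr]$ with no residual term to control. The remaining work is a small-$y$ Taylor expansion of this expectation under the Girsanov tilt $\PP^{(\sigma)}$, where the supermartingale property of $Y^2$ (again using $\sigma^2/\delta_0>1$) controls the second-order term. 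That localisation-free structure is precisely what your approach lacks. If you want to salvage your strategy you would first need to prove the uniform inclusion independently, which I do not see how to do without already knowing $c_1>0$.
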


\begin{proof}
Arguing by contradiction we assume that there exists $z_0\in\RR$ such that $c_1(z_0)=0$. Hence 
\begin{align}\label{contr}
(z_0,y)\notin\CS'_1~\text{and}~(F(z_0,y),y) \notin \CS_1\qquad\text{for all $y \in (0,1)$.} 
\end{align}
Since $F(\,\cdot\,,y)$ is increasing, properties of $\CS_1$ studied in Section \ref{sec:regions} imply that for fixed $h>0$ we may define a strip 
\begin{align*}
C_0(h):=\{(z,y)\in\RR\times(0,1)\,\tq\, F(z_0-h,y)\le x \le F(z_0,y)\}
\end{align*}
and $C_0(h)\cap \CS_1=\emptyset$.

In particular if we pick $y\in(0,1)$ and $x=F(z_0-h,y)$ then, assuming without loss of generality that $k > 0$ (see \eqref{def:k}), we have $\tau_* \ge h$, $\PP_{x,y}$-a.s. The latter follows by the fact that for all $t\in[0,h]$ the couple $(X^{x,y}_t,Y^{y}_t)$ lies in $C_0(h)$ because its joint distribution is supported along a curve $\{( F(z_0-h+kt,\zeta),\zeta),\, \zeta\in(0,1)\}$ (see Remark \ref{rem:supp}). Notice that for $k<0$ and with $x=F(z_0-h,y)$, monotonicity of $F(\,\cdot\,,\,y)$ and \eqref{contr} imply $\tau_* = +\infty$ $\PP_{x,y}$-a.s.).

Theorem \ref{value} gives
\begin{align}\label{ast}
V(x,y)&\le \EE_{x,y}\left[e^{-r(h\wedge \tau_*)}V(X_{h\wedge \tau_*},Y_{h\wedge \tau_*})\right]=\EE_{x,y}\left[e^{-r h}V(X_{h}, Y_{h})\right]\nonumber\\
&\le \EE_{x,y}[e^{-r h}X_{h}]=F(z_0-h,y)\EE_{y}[e^{-\delta_0 \int_0^h Y_t\,dt}M_h],
\end{align}
where $M_h=\exp(\sigma W_h -\frac{\sigma^2}{2}h)$. We aim at showing that for $y$ sufficiently close to zero we get
\begin{align*}
F(z_0-h,y)\EE_{y}\left[e^{-\delta_0 \int_0^h Y_t\,dt}M_h\right]\le G_1(F(z_0-h,y))=F(z_0-h,y)-K
\end{align*}
or equivalently
\begin{align}\label{theta1}
\Theta(y):=F(z_0-h,y)\left(1-\EE_{y}\left[e^{-\delta_0 \int_0^h Y_t\,dt}M_h\right]\right)\ge K.
\end{align}
The latter and \eqref{ast} lead to $V(x,y)\le x-K$, hence a contradiction.

Defining the probability measure $\PP^{(\sigma)}$ by 
$$
\frac{d\PP^{(\sigma)}_{y}}{d\PP_{y}}=M_h
$$ 
by Girsanov's theorem we have that $W_t^{(\sigma)}=W_t-\sigma t$, $t\ge0$ is a Brownian motion under $\PP^{(\sigma)}_{y}$. Moreover under the new measure $Y$ evolves according to 
\begin{align*}
Y_t= y-\frac{\delta_0}{\sigma}\int_0^t Y_s(1- Y_s)dW_s^{(\sigma)}-\delta_0\int_0^t Y_s(1- Y_s)ds.
\end{align*}
From the above dynamics it follows immediately that $\EE^{(\sigma)}_{y}( Y_t) \le y$ for all $t\ge 0$ and 
\begin{equation}\label{minorationexpectation}
\EE^{(\sigma)}_{y}(Y_t)\ge y- \delta_0 \int_0^t \EE^{(\sigma)}_{y}(Y_s)\, ds \ge y(1-\delta_0 t).
\end{equation}
Using the inequality $1-e^{-u} \ge u-\frac{u^2}{2}$ valid for $u \ge 0$, we have
\begin{align*}
\Theta(y)&=F(z_0-h,y)\EE_{y}\left[\left(1-e^{-\delta_0 \int_0^h Y_t\,dt}\right)M_h\right]\\
& \ge F(z_0-h,y)\left( \delta_0\EE^{(\sigma)}_{y}\left[\int_0^h Y_t\,dt\right]-\frac{\delta^2_0}{2}\EE^{(\sigma)}_{y}\left[\int_0^h Y_t\,dt\right]^2\right)\\
&\ge F(z_0-h,y)\left( y\delta_0\int_0^h(1-\delta_0 t)\,dt-\frac{\delta^2_0h}{2}\EE^{(\sigma)}_{y}\left[\int_0^h (Y_t)^2\,dt\right]\right),
\end{align*}
where for the last inequality we used \eqref{minorationexpectation} and Cauchy-Schwarz inequality. We aim at showing that
\begin{align}\label{**}
\EE^{(\sigma)}_{y}\left[\int_0^h (Y_t)^2\,dt\right] \le h y^2.
\end{align}
To see this, we observe that $Y^2$ is a supermartingale under the probability measure $\PP^{(\sigma)}$. Indeed, applying It\^o's formula we get
$$
d(Y^y_t)^2= -2\frac{\delta_0}{\sigma} (Y^y_t)^2(1-Y^y_t)dW^{(\sigma)}_t -\delta_0 (Y^y_t)^2(1-Y^y_t)dt +\frac{\delta_0^2}{\sigma^2}(Y^y_t)^2(1-Y^y_t)^2dt,
$$
and the drift part of the SDE is non-positive because $\frac{\sigma^2}{\delta_0} > 1$. Thus \eqref{**} holds as claimed.

Finally we obtain
\begin{align}\label{theta2}
\Theta(y)\ge\delta_0 h\, y\,F(z_0-h,y)\left( 1-\frac{\delta_0h}{2}(1+y)\right).
\end{align}
Recalling that $y\in[0,1]$, for $h$ sufficiently small we have $1> \tfrac{\delta_0h}{2}(1+C\,y)$. Moreover when $\sigma^2/\delta_0> 1$ it is immediate to check that $y\,F(z_0-h,y)\to+\infty$ as $y\to0$ (see \eqref{def:F}). In conclusion the right-hand side in \eqref{theta2} diverges, yielding the desired contradiction.
\end{proof}

We can now prove existence of a saddle point for our game.
\begin{pro}\label{Nash}
If $k>0$ the pair $(\gamma_*,\tau_*)$ defined in Theorem \ref{value} is a saddle point.
\end{pro}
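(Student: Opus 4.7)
The plan is to establish the two saddle-point inequalities
\[
M_{x,y}(\tau,\gamma_*)\le V(x,y)\le M_{x,y}(\tau_*,\gamma)\qquad \text{for all }\tau,\gamma\in\CT,
\]
from which, specialising $\tau=\tau_*$ and $\gamma=\gamma_*$, I would deduce $V(x,y)=M_{x,y}(\tau_*,\gamma_*)$ and hence that $(\tau_*,\gamma_*)$ is a saddle point.

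The first inequality $M_{x,y}(\tau,\gamma_*)\le V(x,y)$ follows almost immediately from the closed supermartingale property of $(e^{-r(t\wedge\gamma_*)}V(X_{t\wedge\gamma_*},Y_{t\wedge\gamma_*}))_{t\ge 0}$ stated in Theorem \ref{value}. I would apply optional sampling at the (possibly infinite) stopping time $\tau\wedge\gamma_*$ -- the closure at $\infty$ being valid thanks to \eqref{infty} which forces the closing value to be $0$ -- and then split the resulting expectation on $\{\tau\le\gamma_*\}$ versus $\{\gamma_*<\tau\}$: on the first event $V\ge G_1$ by definition of $V$, and on the second $V(X_{\gamma_*},Y_{\gamma_*})=G_2(X_{\gamma_*})$ by the definition \eqref{gamma*} of $\gamma_*$.

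The reverse inequality $V(x,y)\le M_{x,y}(\tau_*,\gamma)$ is the delicate step, because the submartingale associated with $\tau_*$ in Theorem \ref{value} is explicitly not closed. The strategy is to truncate at a finite horizon $n\in\NN$. Setting $\nu_n:=\tau_*\wedge\gamma\wedge n\le\tau_*$ and using the submartingale property gives
\[
V(x,y)\le \EE\bigl[e^{-r\nu_n}V(X_{\nu_n},Y_{\nu_n})\bigr].
\]
Decomposing the right-hand side on $\{\gamma<\tau_*\wedge n\}$, $\{\tau_*\le\gamma,\,\tau_*\le n\}$ and $\{n<\tau_*\wedge\gamma\}$, I would use $V\le G_2$ on the first event and $V(X_{\tau_*},Y_{\tau_*})=G_1(X_{\tau_*})$ (from \eqref{tau*}) on the second, so that as $n\to\infty$ monotone convergence sends the first two pieces to $\EE[e^{-r\gamma}G_2(X_\gamma)\indic_{\gamma<\tau_*}]$ and $\EE[e^{-r\tau_*}G_1(X_{\tau_*})\indic_{\tau_*\le\gamma}]$, which together form $M_{x,y}(\tau_*,\gamma)$.

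The main obstacle, and the only place where $k>0$ enters, is to show that the remainder $R_n:=\EE[e^{-rn}V(X_n,Y_n)\indic_{n<\tau_*\wedge\gamma}]$ vanishes. Here I would exploit Corollary \ref{cor:bb} together with Lemma \ref{boundedsupport}: on $\{n<\tau_*\}$ the pair $(Z_n,Y_n)$ lies in $\CC'$, so $Y_n>c_1(Z_n)$; since $c_1$ is non-decreasing and $Z_n=z+kn\ge z$ when $k>0$, this yields the \emph{uniform in $n$} lower bound $Y_n>c_1(z)>0$. Writing $X_n=F(Z_n,Y_n)=e^{Z_n}\bigl(\tfrac{1-Y_n}{Y_n}\bigr)^{\sigma^2/\delta_0}$ and bounding $V\le G_2\le X_n+\eps_0$, I obtain the purely deterministic estimate
\[
e^{-rn}V(X_n,Y_n)\indic_{n<\tau_*}\le e^{z}e^{(k-r)n}\Bigl(\tfrac{1-c_1(z)}{c_1(z)}\Bigr)^{\sigma^2/\delta_0}+\eps_0\, e^{-rn},
\]
which decays exponentially because $k-r=-\tfrac{\sigma^2}{2}-\tfrac{\delta_0}{2}<0$. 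Hence $R_n\to 0$, and letting $n\to\infty$ gives $V(x,y)\le M_{x,y}(\tau_*,\gamma)$, closing the argument.
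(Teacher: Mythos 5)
Your proposal is correct and takes essentially the same route as the paper: the first inequality comes from the closed supermartingale property for $\gamma_*$, the second from the (non-closed) submartingale property for $\tau_*$ with a finite-horizon truncation, and the only real work is killing the remainder term, which both you and the paper do by observing that on $\{t<\tau_*\}$ one has $Y_t>c_1(Z_t)\ge c_1(z)>0$ (monotonicity of $c_1$, $Z_t=z+kt\ge z$ since $k>0$, and Lemma \ref{boundedsupport}). The only cosmetic difference is in the final estimate: the paper additionally uses the stopping constraint $X_t\le b_1(Y_t)$ together with monotonicity of $b_1$ to get the uniform bound $X_t\le (b_1\circ c_1)(z)$ and then lets $e^{-rt}$ do the damping, whereas you accept the exponentially growing bound $X_n\le e^{z+kn}\bigl(\tfrac{1-c_1(z)}{c_1(z)}\bigr)^{\sigma^2/\delta_0}$ coming directly from $F$ and rely on $k-r<0$; both are valid, and yours has the minor advantage of bypassing $b_1$ (and hence Lemma \ref{lem:bfinite}) altogether.
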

\begin{proof}
Since Theorem \ref{value} guarantees the optimality of $\gamma_*$, i.e. 
\begin{align*}
V(x,y)\ge M_{x,y}(\tau,\gamma_*),\quad\text{for all $\tau\in\CT$,}
\end{align*}
it remains to prove the optimality of $\tau_*$, that is
\begin{align*}
V(x,y) \le M_{x,y}(\tau_*,\gamma),\quad\text{for all $\gamma\in\CT$,}
\end{align*}
Let $z\in\RR$ be fixed and set $x=F(z,y)$. Invoking Theorem \ref{value} and observing that for any fixed $t>0$ and $\gamma$,
\begin{align*}
&V( X_{ \tau_*}, Y_{ \tau_*})\indic_{ \{\tau_* \le t\wedge \gamma\}}=G_1(X_{\tau_*})\indic_{ \{\tau_* \le t\wedge \gamma \}}
\end{align*} 
we obtain 
\begin{align*}
V(x,y) \le& \EE_{x,y} \left[ e^{-r(t\wedge \tau_* \wedge \gamma)}V( X_{t \wedge \tau_* \wedge \gamma}, Y_{t\wedge \tau_* \wedge \gamma}) \right]\\
\le&\EE_{x,y} \left[  e^{-r \tau_*} G_1( X_{\tau_*}) \indic_{ \{\tau_* \le t\wedge \gamma \}} + e^{-r\gamma}G_2( X_{ \gamma})\indic_{ \{\gamma < t\wedge \tau_* \}} \right] \\
&+\EE_{x,y} \left[e^{-r t}V(X_{ t}, Y_{ t})\indic_{ \{t \le \tau_* \wedge \gamma \}} \right] 
\end{align*}
for any stopping time $\gamma$.

We now prove that the last term of the expression above converges to zero as $t\to +\infty$. Notice first that $c_1$ is non-decreasing (see Corollary \ref{cor:bb}) and therefore $\zeta\mapsto (b_1\circ c_1)(\zeta)$ is non-increasing due to Corollary \ref{cor:b}. 
For $t \le \tau_*$ we have $X_t^{x} = F(Z^z_t,Y^y_t) \le b_1(Y_t)$, which implies that $Y^y_t\ge c_1(Z^z_t)\ge c_1(z)$ after the change of variables. Then $b_1(Y_t)\le (b_1\circ c_1)(z)$ and we have the uniform bound $X_t^{x}\le (b_1\circ c_1)(z)=:a_z$ for $t \le \tau_*$. Notice that $c_1(z)>0$ thanks to Lemma \ref{boundedsupport}, so that we also have $a_z<+\infty$.

Using such bound we get
\begin{align*}
\EE_{x,y} \left[e^{-r t}V( X_{t}, Y_{t})\indic_{ \{t \le \tau_* \wedge \gamma \}} \right] \le G_2\left(a_z\right)e^{-r t}\to 0,\quad\text{as $t\to0$.}
\end{align*}
Next, the monotone convergence theorem yields
\begin{align*}
V(x,y)\le&  \lim_{t \to +\infty}\EE_{x,y} \left[  e^{-r \tau_*} G_1( X_{\tau_*}) \indic_{ \{\tau_* \le t\wedge \gamma \}} + e^{-r\gamma}G_2( X_{ \gamma})\indic_{ \{\gamma < t\wedge \tau_* \}} \right] \\
=&\EE_{x,y} \left[  e^{-r \tau_*} G_1(X_{\tau_*}) \indic_{ \{\tau_* \le \gamma \}} + e^{-r\gamma_*}G_2( X_{ \gamma})\indic_{ \{\gamma < \tau_* \}} \right] \\
=& M_{x,y}(\tau_*,\gamma),
\end{align*}
that is, $\tau_*$ is optimal for the buyer. 
\end{proof}

Let us now analyze the case $k<0$, for which we prove existence of a Nash equilibrium under stronger assumptions on the parameters. We start with an auxiliary lemma, which will require the following assumption (recall also that $\sigma^2/\delta_0>1$ by Assumption \ref{ass:1}).
\begin{ass}\label{ass:strong}
We take $r$ such that
\begin{align}\label{bound-r}
\frac{\delta_0}{\sigma^2}\left(\frac{\delta_0+\sigma^2}{2}\right)<r<\left(\frac{\delta_0+\sigma^2}{2}\right).
\end{align}
\end{ass}
Notice that \eqref{bound-r} indeed implies $k<0$.
\begin{lem}\label{case2}
Under Assumption \ref{ass:strong} it holds that:
\[
\lim_{t\to\infty} e^{-rt}F(z+kt,c_1(z+kt)) = 0,\qquad \forall z\in \RR.
\]
\end{lem}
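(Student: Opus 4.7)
My approach is to change variables to $\zeta(t):=z+kt$, which tends to $-\infty$ as $t\to\infty$ since $k<0$ under Assumption \ref{ass:strong}. I would first observe that $c_1(\zeta)\to 0$ as $\zeta\to-\infty$: the inclusion $\CS_1'\subseteq R_K$ forces $c_1(\zeta)\le y_K(\zeta)=e^{(\delta_0/\sigma^2)\zeta}/(K^{\delta_0/\sigma^2}+e^{(\delta_0/\sigma^2)\zeta})$, which vanishes. Thus along the trajectory we are in the regime where $F(\zeta,c_1(\zeta))=e^\zeta((1-c_1)/c_1)^\alpha$ (with $\alpha:=\sigma^2/\delta_0>1$) can blow up, and a quantitative estimate is necessary.

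The central technical step is to establish the asymptotic upper bound
\[
F(\zeta,c_1(\zeta)) \le C\,e^{-\zeta/(\alpha-1)} \qquad \text{for all } \zeta \text{ sufficiently negative.}
\]
From the inclusion $\CS_1'\subseteq A_1^c$ established in Proposition \ref{inclusionregions}, we have the product bound $c_1(\zeta)F(\zeta,c_1(\zeta))\ge rK/\delta_0$, and substituting the explicit form of $F$ yields the one-sided estimate $c_1(\zeta)\le(\delta_0/(rK))^{1/(\alpha-1)}e^{\zeta/(\alpha-1)}$. The matching lower bound $c_1(\zeta)\ge c_0\,e^{\zeta/(\alpha-1)}$ I would obtain by exploiting the fact that $\xi(\zeta):=F(\zeta,c_1(\zeta))=b_1(c_1(\zeta))$ is non-increasing in $\zeta$—since $b_1$ is non-increasing in $y$ (Corollary \ref{cor:b}) and $c_1$ is non-decreasing (Corollary \ref{cor:bb})—and combining this monotonicity with the smooth-fit relation of Lemma \ref{lem:sm-f} to derive a differential inequality on $c_1$ that complements the $A_1^c$ bound.

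Granted the two-sided control, the conclusion follows by a short algebraic computation. A direct rearrangement shows that Assumption \ref{ass:strong} is equivalent to $r+k/(\alpha-1)>0$: using $k=r-(\sigma^2+\delta_0)/2$ and $\alpha-1=(\sigma^2-\delta_0)/\delta_0$, the bound $r>\delta_0(\sigma^2+\delta_0)/(2\sigma^2)$ rearranges to $r(\alpha-1)>-k$. Therefore
\[
e^{-rt}F(z+kt,c_1(z+kt)) \le C\,e^{-rt-(z+kt)/(\alpha-1)} = C\,e^{-z/(\alpha-1)}\,e^{-t[r+k/(\alpha-1)]} \xrightarrow[t\to\infty]{} 0,
\]
as required.

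The hardest step is sharpening the upper bound on $c_1$ into a matching lower bound, since the game-theoretic containments ($\CS_1'\subseteq A_1^c$, $V\ge V_1$, the Lipschitz estimate of Lemma \ref{Lipschitz}(iii)) all produce one-sided inequalities of the \emph{same} sign, namely upper bounds on $c_1$ (equivalently, lower bounds on $F$). I expect this step to require a genuine PDE input—most naturally a maximum-principle or comparison argument for $v$ in $\CC'$, exploiting the boundary values $v_z=F$ on $\partial\CS_1'$ derived by differentiating $v=H_1=F-K$ along the boundary together with the smooth-fit $v_y=H_{1,y}$ from Lemma \ref{lem:sm-f}.
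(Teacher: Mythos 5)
Your reduction of the problem to a lower bound $c_1(\zeta)\ge c_0\,e^{\zeta/(\alpha-1)}$ as $\zeta\to-\infty$ (with $\alpha=\sigma^2/\delta_0$), together with the arithmetic check that Assumption \ref{ass:strong} is precisely what makes this rate sufficient, is correct and matches the quantitative content of the paper's own argument. The upper bound $c_1(\zeta)\le(\delta_0/rK)^{1/(\alpha-1)}e^{\zeta/(\alpha-1)}$ from Proposition \ref{inclusionregions} is also a nice observation, showing in fact that $c_1(\zeta)\asymp e^{\zeta/(\alpha-1)}$. But the proof has a genuine gap at its central step: the lower bound on $c_1$ is never actually established. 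The route you sketch --- monotonicity of $\xi(\zeta)=b_1(c_1(\zeta))$ together with smooth fit (Lemma \ref{lem:sm-f}) yielding a ``differential inequality on $c_1$'' --- does not obviously produce the required rate. Monotonicity of $\xi$ only says $F(\zeta,c_1(\zeta))$ is non-increasing in $\zeta$, which is compatible with arbitrarily fast blow-up as $\zeta\to-\infty$; and the smooth-fit condition $w_y(z_0,c_1(z_0)+)=0$ constrains $v$ near the boundary but is not, on its own, a differential constraint on the boundary curve. You flag this honestly as the hardest step and concede you ``expect'' a PDE input is needed, but that leaves the proof incomplete at exactly the point where the real work lies.

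The paper fills this gap by a probabilistic rather than a PDE argument. For $k<0$ and $(z,y)$ with $y>c_1(z)$, the key observation is that $\tau_*\ge\lambda_{c_1(z)}$ $\PP_{x,y}$-a.s., where $\lambda_a:=\inf\{t>0:\,Y_t\le a\}$ --- because $c_1$ is non-decreasing while $Z_t=z+kt$ is decreasing, the level the process must reach to enter $\CS_1'$ only drops over time. Combining the submartingale property of $e^{-rt}V(X_t,Y_t)$ up to $\tau_*$ (Theorem \ref{value}) with the explicit Laplace transform of $\lambda_{c_1(z)}$, obtained from the fundamental decreasing solution $\phi(y)=y^{1-\beta}(1-y)^{\beta}$, $\beta=\sigma^2/\delta_0+1$, of $\CL_Y f-(r-k)f=0$ for the generator $\CL_Y$ of $Y$, yields the sharp bound $v(z,y)\le\tfrac{1-y}{1-c_1(z)}F(z,y)$. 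Evaluating this at $y=a\,c_1(z+kt)$ for fixed $a>1$ and pairing it with $v\ge H_1=F-K$ produces precisely the missing lower bound $c_1(z+kt)\gtrsim e^{kt\,\delta_0/(\sigma^2-\delta_0)}$. So the device that unlocks the estimate is not a comparison argument in $\CC'$ but the exact solvability of the one-dimensional hitting-time problem for $Y$, used against the submartingale property of the value.
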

\begin{proof}
First note that
\[
e^{-rt}F(z+kt,c_1(z+kt))=e^{z+(k-r)t}\left(\frac{1-c_1(z+kt)}{c_1(z+kt)}\right)^{\frac{\sigma^2}{\delta_0}}.
\]
Then recall that $c_1(\cdot)\le y_K(\cdot)$ (see \eqref{RK}) and since $k<0$ then $c_1(z+kt)\to0$ as $t\to+\infty$. 
It is therefore sufficient to prove that as $t\to\infty$  
\begin{align}\label{req} 
\frac{1}{c_1(z+kt)} \leq c\,e^{\alpha t},
\end{align}
for some constants $c>0$ and $\alpha < \frac{\delta_0}{\sigma^2}(r-k)$.

Define $\lambda_{a}=\inf\{ t>0 \,|\, Y_t \leq a\}$. Let $z\in \RR$, $y>c_1(z)$ and $x=F(z,y)$. Note that since $z\rightarrow c_1(z)$ is non-decreasing and $k<0$, we have $\tau_* \geq \lambda_{c_1(z)}$ $\PP_{x,y}$-almost surely. Therefore, for all $t\geq 0$ Theorem \ref{value} gives
\begin{align}\label{eq:n1}
V(x,y)&\le \EE_{x,y}\left[e^{-r(t\wedge \lambda_{c_1(z)})}V\left( X_{t\wedge \lambda_{c_1(z)}}, Y_{t\wedge \lambda_{c_1(z)}}\right)\right]\\
&=\EE_{x,y}\left[e^{-r t}V\left( X_{t}, Y_{t}\right)\indic_{\{t<\lambda_{c_1(z)}\}} +e^{-r \lambda_{c_1(z)}} V\left( X_{\lambda_{c_1(z)}},c_1(z)\right)\indic_{\{\lambda_{c_1(z)}\leq t\}}\right].\nonumber
\end{align}

On the event $\{t<\lambda_{c_1(z)}\}$ we have $X_t\le b_1(Y_t)$ and $Y_t\ge c_1(z)$, $\PP_{x,y}$-a.s., so that $X_t\le (b_1\circ c_1)(z)=:a_z<+\infty$, $\PP_{x,y}$-a.s.~(as in the proof of Proposition \ref{Nash}).
The latter implies  
\[ 
e^{-r t}V( X_{t}, Y_{t})\indic_{\{t<\lambda_{c_1(z)}\}} \leq e^{-r t} X_{t}\indic_{\{t<\lambda_{c_1(z)}\}} \leq e^{-rt}a_z\indic_{\{t<\lambda_{c_1(z)}\}}
\]
and hence 
\[
\lim_{t\to 0}\EE_{x,y}\left[e^{-r t}V( X_{t}, Y_{t})\indic_{\{t<\lambda_{c_1(z)}\}}\right]= 0,\quad\text{$\PP_{x,y}$-a.s.}
\]
Taking limits in \eqref{eq:n1} as $t\to\infty$ and using monotone convergence, we deduce that
\begin{align*} 
V(x,y) \leq&\, \EE_{x,y}\left[e^{-r \lambda_{c_1(z)}} V\left( X_{\lambda_{c_1(z)}},c_1(z)\right)\indic_{\{\lambda_{c_1(z)}<\infty\}}\right]\\
\leq&\, \EE_{x,y}\left[e^{-r \lambda_{c_1(z)}} X_{\lambda_{c_1(z)}}\indic_{\{\lambda_{c_1(z)}<\infty\}}\right] \\
 =&\,\EE_y\left[e^{-r \lambda_{c_1(z)}} F(z+k \lambda_{c_1(z)}, c_1(z))\indic_{\{\lambda_{c_1(z)}<\infty\}}\right] \\
 =&\, e^z\left( \frac{1-c_1(z)}{c_1(z)}\right)^{\frac{\sigma^2}{\delta_0}}\EE_{y}\left[e^{(-r+k) \lambda_{c_1(z)}}\indic_{\{\lambda_{c_1(z)}<\infty\}}\right]
\end{align*}

In order to compute the Laplace trasform of $\lambda_{c_1(z)}$ we need to recall the fundamental solutions of $\CL_{Y} f- (r-k)f =0$, where $\CL_Y$ denotes the infinitesimal generator of the diffusion $Y$. Letting $\psi$ be the unique positive, increasing solution and $\phi$ the unique positive, decreasing one, we have 
\[ 
\psi(y)=y^{\beta}(1-y)^{1-\beta} \quad\text{and}\quad \phi(y)=y^{1-\beta}(1-y)^{\beta} 
\]
where $\beta=\frac{\sigma^2}{\delta_0}+1$ is the largest solution of 
\[ 
\beta(\beta-1)=\frac{2\sigma^2(r-k)}{\delta_0^2}= \frac{\sigma^2}{\delta_0}\left(\frac{\sigma^2}{\delta_0}+1\right).
\]
In terms of $\phi$ the Laplace transform of $\lambda_{c_1(z)}$ reads (recall that $y>c_1(z)$)
\[
\EE_y\left[e^{(-r+k) \lambda_{c_1(z)}}\indic_{\{\lambda_{c_1(z)}<\infty\}}\right] = \frac{y^{1-\beta}(1-y)^\beta}{c_1(z)^{1-\beta}(1-c_1(z))^\beta}.
\]
In conclusion, for any $z\in\RR$ and $y>c_1(z)$, taking $x=F(z,y)$ we have (recall \eqref{vH})
\begin{align}\label{bb1} 
v(z,y)=&\,V(x,y)\\
\leq &\, e^z\left( \frac{1-c_1(z)}{c_1(z)}\right)^{\frac{\sigma^2}{\delta_0}} \frac{y^{1-\beta}(1-y)^\beta}{c_1(z)^{1-\beta}(1-c_1(z))^\beta}= \frac{(1-y)}{(1-c_1(z))}F(z,y).\nonumber
\end{align}

Now we fix $z\in\RR$ and pick $a>1$ such that $a\,c_1(z)<1$. Since $v(z+kt,a\,c_1(z+kt))\ge F(z+kt,a\,c_1(z+kt))-K$ for all $t\ge 0$ we can use the latter and \eqref{bb1}, replacing $(z,y)$ therein by $\big(z+kt,a\,c_1(z+kt)\big)$, to estimate  
\begin{align*} 
-K \leq&\, (v-F)(z+kt,a\,c_1(z+kt)) \leq -\frac{(a-1)c_1(z+kt)}{1-c_1(z+kt)}F(z+kt,a\,c_1(z+kt)).
\end{align*}
Simple algebra gives
\[
\frac{1}{c_1(z+kt)}  \leq c\, e^{\alpha t}
\]
for some constant $c>0$ depending on $z$, $K$ and $a$, and with $\alpha=-k \delta_0/(\sigma^2-\delta_0)$. Now Assumption \ref{ass:strong} implies that $\frac{-k \delta_0}{\sigma^2-\delta_0}<\frac{\delta_0}{\sigma^2}(r-k)$ as required in \eqref{req}.
\end{proof}

\begin{pro}\label{Nash2}
Under Assumption \ref{ass:strong} the pair $(\tau_*,\gamma_*)$ is a saddle point.
\end{pro}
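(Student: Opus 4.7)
My plan is to mirror the three-step argument used in the proof of Proposition \ref{Nash}, but replacing the crucial uniform bound $X_t\le (b_1\circ c_1)(z)$ (which there controlled the residual term thanks to $c_1$ being non-decreasing combined with $k>0$) by a time-dependent estimate supplied by Lemma \ref{case2}. Since optimality of $\gamma_*$ is already granted by Theorem \ref{value}, only the inequality $V(x,y)\le M_{x,y}(\tau_*,\gamma)$ for arbitrary $\gamma\in\CT$ needs to be proven.

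First I would use the submartingale property of $(e^{-r(t\wedge\tau_*)}V(X_{t\wedge\tau_*},Y_{t\wedge\tau_*}))_{t\ge 0}$ stated in Theorem \ref{value}, combined with optional stopping at the bounded stopping time $t\wedge\gamma$, to get
\begin{align*}
V(x,y)\le \EE_{x,y}\bigl[e^{-r(t\wedge\tau_*\wedge\gamma)}V(X_{t\wedge\tau_*\wedge\gamma},Y_{t\wedge\tau_*\wedge\gamma})\bigr].
\end{align*}
Splitting the right-hand side on $\{\tau_*\le t\wedge\gamma\}$, $\{\gamma<\tau_*\wedge t\}$ and $\{t<\tau_*\wedge\gamma\}$, and using $V(X_{\tau_*},Y_{\tau_*})=G_1(X_{\tau_*})$ on $\CS_1$ together with $V\le G_2$, the problem reduces to showing that
\begin{align*}
R_t:=\EE_{x,y}\bigl[e^{-rt}V(X_t,Y_t)\indic_{\{t<\tau_*\wedge\gamma\}}\bigr]\to 0\quad\text{as $t\to\infty$}.
\end{align*}

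The key step is to control $R_t$. Writing $x=F(z,y)$, the crucial observation is that on $\{t<\tau_*\}$ the process $(Z,Y)$ has not yet entered $\CS'_1$, so $Y^y_t>c_1(z+kt)$; since $\eta\mapsto F(\zeta,\eta)$ is strictly decreasing, this yields the pathwise bound $X_t=F(z+kt,Y^y_t)\le F(z+kt,c_1(z+kt))$ on $\{t<\tau_*\}$. Combining with $V\le G_2(\,\cdot\,)\le(\,\cdot\,)+\eps_0$ gives the deterministic majorization $R_t\le e^{-rt}F(z+kt,c_1(z+kt))+\eps_0\, e^{-rt}$, and Lemma \ref{case2} (together with $e^{-rt}\to 0$) then delivers $R_t\to 0$.

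Finally, sending $t\to\infty$ in the initial inequality, monotone convergence applied to the two non-negative surviving terms (whose indicator functions increase to $\indic_{\{\tau_*\le \gamma\}}$ and $\indic_{\{\gamma<\tau_*\}}$ respectively) yields $V(x,y)\le M_{x,y}(\tau_*,\gamma)$. The hard part of the argument is clearly the estimate of $R_t$: contrary to the $k>0$ regime there is no deterministic compact containment of $X_t$ on $\{t\le\tau_*\}$, and the delicate balance between the discount rate $r$ and the decay of $c_1$ at $-\infty$ encoded in Lemma \ref{case2} (which in turn rests on Assumption \ref{ass:strong}) is indispensable.
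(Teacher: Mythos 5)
Your proposal is correct and follows essentially the same route as the paper: reduce to bounding the residual term via the submartingale property, exploit the monotonicity of $c_1$ (together with $F$ being decreasing in $y$) to get the pathwise bound $X_t\le F(z+kt,c_1(z+kt))$ on $\{t<\tau_*\}$, invoke Lemma \ref{case2} to send this to zero, and finish with monotone convergence. The only cosmetic difference is that you bound $V$ by $G_2\le(\cdot)+\varepsilon_0$ whereas the paper uses the slightly sharper $V(x,y)\le x$ (which follows from $V\le V_\infty\le x$), but since $\varepsilon_0 e^{-rt}\to 0$ as well, this changes nothing.
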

\begin{proof}
As in Proposition \ref{Nash}, we only have to prove the optimality of $\tau_*$ and we argue in a similar way.
Let $z\in\RR$ be fixed and set $x=F(z,y)$, then as in the proof of Proposition \ref{Nash} we find 
\begin{align}\label{fin}
V(x,y) \le&\, \EE_{x,y} \left[ e^{-r(t\wedge \tau_* \wedge \gamma)}V(X_{t \wedge \tau_* \wedge \gamma}, Y_{t\wedge \tau_* \wedge \gamma}) \right]\nonumber\\
=&\,\EE_{x,y} \left[  e^{-r \tau_*} G_1( X_{\tau_*}) \indic_{ \{\tau_* \le t\wedge \gamma \}} + e^{-r\gamma}G_2( X_{ \gamma}, Y_{ \gamma})\indic_{ \{\gamma < t\wedge \tau_* \}} \right] \\
&+\EE_{x,y} \left[e^{-r t}V( X_{ t}, Y_{ t})\indic_{ \{t\le \tau_*\wedge \gamma \}} \right] \nonumber
\end{align}
for any stopping time $\gamma$ and any $t$. Under $\PP_{x,y}$ we have $X_t = F(z+kt,Y_t)$ and for $t < \tau_*$ we have $Y_t\ge c_1(z+kt)$, which implies $X_t \le F(z+kt,c_1(z+kt))$. The latter gives 
\[
\EE_{x,y}\left[e^{-r t}V( X_{t}, Y_{t})\indic_{ \{t \le \tau_*\wedge \gamma \}} \right] \le e^{-r t}F(z+kt,c_1(z+kt)),
\]
which goes to zero according to Lemma \ref{case2}. Then taking limits as $t\to \infty$ in \eqref{fin} and using also monotone convergence we conclude the proof.
\end{proof}

\section{Concluding remarks}\label{sec:concl}

Our approach relies on the ability to obtain a two-dimensional Markovian dynamic for the stock price and the expected value of the dividend rate (given the observations of the stock). This stems from fact that the dividend rate has a two-point distribution. Similarly, if $D$ has a discrete distribution taking $n$ values, we can use filtering methods to reduce the problem to a stopping game on a $n$-dimensional degenerate diffusion. However, it should be clear at this point that a free boundary analysis of such problem is likely to be extremely convoluted. An even more complex situation arises when the dividend rate is allowed to take infinitely many values. In that case the dynamics obtained via our filtering approach may easily lead to a formulation of the game which is intractable with free boundary methods. 

An alternative approach relies on the use of a Girsanov transformation. While it falls outside the scope of the present paper to perform a fully rigorous analysis of this method, we believe it may be useful for future research to outline the main ideas of this approach and point out some questions that arise naturally. 

Letting a process $(\beta_t)_{t\ge 0}$ be defined by
\[\beta_t:=B_t+\sigma^{-1}(r-\delta_0D) t,\qquad t\ge0, \]
we have that the stock price in \eqref{defiX} reads
\begin{align}\label{Sb}
S_t=S_0\exp\left(\sigma\beta_t-\tfrac{\sigma^2}{2}t\right).
\end{align}
Moreover, the process $\beta$ is an $\CF$-Brownian motion for $t\in[0,T]$, under the measure $\mathbb{Q}$ defined by
\begin{align}\label{eq:Q}
N_T=\frac{d \mathbb{Q}}{d\mathbb{P}}\bigg|_{\mathcal F_T}:=\exp\left(\tfrac{\delta_0 D-r}{\sigma}\beta_T+\tfrac{(\delta_0D-r)^2}{2\sigma^2}T\right),
\end{align}  
for all $T \geq 0$, where $\CF$ is the (augmented) filtration generated by $B$ and $D$.

Although it is not possible, in general, to perform the change of measure on $\mathcal{F}_\infty$ (see, e.g., \cite[pp.~192-193]{KS}), let us set this problem aside for now and assume that the distribution of $D$ is sufficiently `nice' to allow the use of \eqref{eq:Q} in order to rewrite \eqref{gameformulationone} as
\begin{align*}
M(\tau,\gamma)=\mathbb{E}^{\mathbb{Q}}\left[N_{\tau\wedge\gamma}\left(e^{-r\tau} G_1(S_\tau)\mathds{1}_{\{\tau\le \gamma\}}+e^{-r\gamma} G_2(S_\gamma)\mathds{1}_{\{\tau>\gamma\}}\right)\right].
\end{align*}
Now we define a process $(L_t)_{t\ge 0}$ with $L_t:= \mathbb{E}^{\mathbb{Q}}[N_t|\mathcal{F}^S_t]$. Thanks to \eqref{eq:Q}, using the fact that $D$ and $\beta$ are independent under $\mathbb{Q}$, and expressing $\beta$ in terms of $S$ (see \eqref{Sb}), we have $L_t=f_D(t,S_t,S_0)$ for some function $f_D$, depending on the specific distribution of $D$. Then, the game's payoff reads
\begin{align}\label{eq:Gb}
M(\tau,\gamma)=\mathbb{E}^{\mathbb{Q}}\left[f_D(\tau\wedge\gamma,S_{\tau\wedge\gamma},S_0)\left(e^{-r\tau} G_1(S_\tau)\mathds{1}_{\{\tau\le \gamma\}}+e^{-r\gamma} G_2(S_\gamma)\mathds{1}_{\{\tau>\gamma\}}\right)\right]
\end{align}
where we note that $f_D$ can be computed explicitly in some cases\footnote{For example, in the simple case of $D\sim N(0,1)$ we have 
\[
f_D(t,s,s_0)=(1+t(\delta_0/\sigma)^2)^{-1/2}\exp\left[g(t,s,s_0)/(1+t(\delta_0/\sigma)^2)\right],
\] 
with $g(t,s,s_0):=\tfrac{1}{2t}q^2(t,s,s_0)-\tfrac{t}{2}[\tfrac{r}{\sigma}-\tfrac{1}{t}q(t,s,s_0)]^2/(1+t(\delta_0/\sigma)^2)$ and $q(t,s,s_0):=\sigma^{-1}\ln(s/s_0)+\sigma t/2$.}.

The construction above holds for any law of $D$ that allows to justify the change of measure on $\mathcal{F}_\infty$ (a seemingly non-trivial task). However, under the expectation, the resulting game's payoff depends explicitly on the initial value of the stock price $S_0$. One way to circumvent this issue would be to consider $S_0$ as a `parameter' in the game formulation \eqref{eq:Gb}, and treat it independently of the initial value of the process $S$. That is, we would fix an arbitrary $\bar s_0$ and study the game with payoff
\begin{align}\label{eq:Gb2}
\mathbb{E}^{\mathbb{Q}}\left[f_D(\tau\wedge\gamma,S_{\tau\wedge\gamma},\bar s_0)\left(e^{-r\tau} G_1(S_\tau)\mathds{1}_{\{\tau\le \gamma\}}+e^{-r\gamma} G_2(S_\gamma)\mathds{1}_{\{\tau>\gamma\}}\right)\right],
\end{align}
where the process $S$ starts from an arbitrary point $S_0$, possibly different from $\bar s_0$.
Now, for each $\bar s_0$ one must solve the Dynkin game with payoff as in \eqref{eq:Gb2}, which remains a challenging task due to the (generally) convoluted expression of $f_D$. Moreover, the shapes of the continuation and stopping region need to be studied not only as functions of time but also as functions of the parameter $\bar s_0$. 

It is interesting to notice that the approach outlined above corresponds to the study of {\em pre-commitment strategies} in the closely related literature on time-inconsistent control/stopping problems. (The interested reader may consult, e.g., \cite{CL19} and references therein, for a recent detailed study on a class of time-inconsistent stopping problems where time inconsistency stems from a `parametric' dependence of the gain function on the starting point of the process, i.e., the analogue of our $f_D(\cdot,\cdot,S_0)$). To the best of our knowledge, time-inconsistent Dynkin games have never been addressed in the literature. Moreover, there seems to be no clear consensus, as to whether the pre-commitment strategy is conceptually the best way forward in time-inconsistent stochastic optimisation problems. This interesting question is left for future research.

\appendix

\section{Proof of Theorem \ref{value}}
The main idea of the proof is to approximate our game by a sequence of games with bounded stopping payoffs indexed by $n\in\mathbb{N}$. For each approximating problem we can apply the results of \cite{ekstrompeskir} regarding existence of the value and of a saddle point. Eventually we pass to the limit as $n\to\infty$ to obtain the existence of the value for the game with unbounded payoffs.

For $n\geq 1$ let us define the functions $G_i^{(n)}(x)=G_i(x\wedge n)$, $i=1,2$. Next for $\tau,\gamma \in \CT$ let us introduce the the associated payoff
\begin{align}\label{eq:Mn} 
M^{(n)}_{x,y}(\tau,\gamma)=\EE_{x,y}[e^{-r \tau}G_1^{(n)}(X_\tau)\indic_{\{\tau \leq \gamma\}} + e^{-r \gamma}G_2^{(n)}(X_{\gamma})\indic_{\{\gamma < \tau\}} ].
\end{align}
According to Theorem 2.1. in \cite{ekstrompeskir}, the game with payoff \eqref{eq:Mn} has a value, i.e. 
\begin{align*}
V^{(n)}(x,y)= \sup_\tau \inf_\gamma M^{(n)}_{x,y}(\tau, \gamma) =\inf_\gamma \sup_\tau M^{(n)}_{x,y}(\tau, \gamma).
\end{align*}
Moreover, the stopping times 
\[ \tau_n = \inf\{ t \geq 0 \,|\, V^{(n)}(X_t,Y_t)=G_1^{(n)}(X_t) \},\]
\[ \gamma_n = \inf\{ t \geq 0 \,|\, V^{(n)}(X_t,Y_t)=G_2^{(n)}(X_t) \},\]
form a Nash equilibrium. Since 
\[G_1^{(n)}(x) \leq  V^{(n)}(x,y) \leq \sup_{\tau} M^{(n)}_{x,y}(\tau,+\infty) \leq  (n-K)^+\]
and $G_1^{(n)}(x)= (n-K)^+$ for $x\geq n$, then $V^{(n)}(x,y)=G^{(n)}_1(x)$ for $(x,y) \in [n,+\infty)\times [0,1]$.
The latter implies that 
\begin{align*}
\{(x,y)\,\tq\,V^{(n)}(x,y)=G_2^{(n)}(x)\} \subset [0,n]\times[0,1]  =\{ (x,y)\,\tq\, G_2(x)=G_2^{(n)}(x)\}
\end{align*}
 and therefore 
\[ \gamma_n = \inf\{ t \geq 0 \,|\, V^{(n)}(X_t,Y_t)=G_2(X_t) \}.\]

Concerning the value of the approximating game, it is easy to check that Lemma \ref{Lipschitz} holds for $V^{(n)}$ with the same proof. Moreover the sequence $M^{(n)}_{x,y}(\tau,\gamma)$ is non-decreasing in $n$ and it is bounded from above by $M_{x,y}(\tau,\gamma)$. Hence the sequence $(V^{(n)})_{n \geq 1}$ is non-decreasing in $n$ with 
\begin{align}\label{eq:Vinf}
V^{(n)}(x,y) \leq \underline V(x,y) \leq \overline V(x,y)\quad\text{and} \quad V^\infty(x,y):= \lim_{n\to\infty} V^{(n)}(x,y)
\end{align}
for all $(x,y)\in\mathbb R_+\times[0,1]$. In particular $V^{(n)} \leq \overline V$ implies
\[ \gamma_n \geq \gamma_*:=\inf\{ t \geq 0 \,|\, \overline V(X_t,Y_t)=G_2(X_t) \}\quad\text{for all $n\ge 1$}.\]
Since $V^{(n)}$ is non-decreasing in $n$ then $\gamma_n$ is non-increasing and we set $\gamma_\infty := \lim_{n\to\infty} \gamma_n$.
\vspace{+5pt}

Now we aim at showing that $V^\infty\ge \overline V$ so that \eqref{eq:Vinf} implies $\underline V=\overline V$ and therefore the value exists and it coincides with $V^\infty$. 
For all $\tau \in \CT$, we have
\begin{align*}
M^{(n)}_{x,y}(\tau,\gamma_n)  =& \EE_{x,y}\left[e^{-r \tau}G_1^{(n)}(X_\tau)\indic_{\{\tau \leq \gamma_n\}} + e^{-r \gamma_n}G_2^{(n)}(X_{\gamma_n})\indic_{\{\gamma_n < \tau\}} \right] \\[+2pt]
=& \EE_{x,y}\left[e^{-r \tau}G_1(X_\tau)\indic_{\{\tau \leq \gamma_n\}} + e^{-r \gamma_n}G_2(X_{\gamma_n})\indic_{\{\gamma_n < \tau\}}\right]\\[+2pt]
&-\EE_{x,y}\left[e^{-r \tau}(X_\tau-n)\indic_{\{X_\tau \geq n\}}\indic_{\{\tau \leq \gamma_n\}}\right].
\end{align*}

Observe that
\begin{align}\label{eq:bound}
0 \le \EE_{x,y}[e^{-r \tau}(X_\tau-n)\indic_{\{X_\tau \geq n\}}\indic_{\{\tau \leq \gamma_n\}} ] \le \EE_{x,y}[e^{-r \tau}X_\tau\indic_{\{X_\tau \geq n\}} ] 
\end{align}
and recall that $\EE_{x,y}[e^{-r \tau}X_\tau ]\le x$ by \eqref{op-sam}.
Using dominated convergence in \eqref{eq:bound} we obtain 
\[
\lim_{n\to\infty}\EE_{x,y}[e^{-r \tau}(X_\tau-n)\indic_{\{X_\tau \geq n\}}\indic_{\{\tau \leq \gamma_n\}} ]=0.
\]

On the other hand, Fatou's Lemma implies 
\begin{align*}
\liminf_{n\to\infty}&\, \EE_{x,y}\left[e^{-r \tau}G_1(X_\tau)\indic_{\{\tau \leq \gamma_n\}} + e^{-r \gamma_n}G_2(X_{\gamma_n})\indic_{\{\gamma_n < \tau\}} \right]\\[+2pt]
 \geq& \EE_{x,y}\left[e^{-r \tau}G_1(X_\tau)\indic_{\{\tau \leq \gamma_\infty\}} + e^{-r \gamma_\infty}G_2(X_{\gamma_\infty})\indic_{\{\gamma_\infty < \tau\}} \right]=M_{x,y}(\tau,\gamma_\infty).
\end{align*}
Collecting the above limits we deduce that
\begin{align}\label{eq:limM} 
\liminf_{n\to\infty} M^{(n)}_{x,y}(\tau,\gamma_n) \geq M_{x,y}(\tau,\gamma_\infty).
\end{align}

Now, for $\varepsilon>0$, let $\tau_\varepsilon$ be such that
\[ M_{x,y}(\tau_\varepsilon,\gamma_\infty) \geq \sup_\tau  M_{x,y}(\tau,\gamma_\infty)-\varepsilon.\]
Using optimality of $\gamma_n$ in the approximating problem, and \eqref{eq:limM} we obtain 
\begin{align*}
V^\infty(x,y) &= \lim_{n\to\infty} V^{(n)}(x,y) = \lim_{n\to\infty} \sup_\tau M^{(n)}_{x,y}(\tau,\gamma_n) \geq \liminf_{n\to\infty} M^{(n)}_{x,y}(\tau_\varepsilon,\gamma_n) \\ 
& \geq M_{x,y}(\tau_\varepsilon,\gamma_\infty) \geq \sup_\tau  M_{x,y}(\tau,\gamma_\infty)-\varepsilon  \geq \overline V (x,y) - \varepsilon.
\end{align*}
Finally, letting $\varepsilon\to 0$ and recalling \eqref{eq:Vinf}, we obtain
\[ \underline V(x,y) \geq   V^\infty(x,y) \geq \overline V (x,y),\]
and hence the existence of the value $V:=V^\infty$. As a byproduct we also obtain that $\gamma_\infty$ is optimal for player 2, that is
\[
V(x,y)=\sup_\tau  M_{x,y}(\tau,\gamma_\infty).
\]
\vspace{+5pt}

Next we want to prove optimality of $\gamma_*$ and super/sub-martingale properties of $V$. For all $n$ and any $\tau\in\CT$ we have (see \cite[Thm.~2.1.]{ekstrompeskir}) 
\begin{align*}
 V^{(n)}(x,y) \geq& \EE_{x,y}[ e^{-r (\tau \wedge \gamma_n)}V^{(n)}(X_{\tau \wedge \gamma_n},Y_{\tau \wedge \gamma_n})]\\
 =&\EE_{x,y}[ e^{-r \tau }V^{(n)}(X_{\tau},Y_{\tau })\indic_{\{\tau \leq \gamma_n\}}]+\EE_{x,y}[ e^{-r \gamma_n }G_2(X_{\gamma_n})\indic_{\{\tau > \gamma_n\}}]\\
 \ge& \EE_{x,y}[ e^{-r \tau }V^{(n)}(X_{\tau},Y_{\tau })\indic_{\{\tau \leq \gamma_n\}}]+\EE_{x,y}[ e^{-r \gamma_n }V(X_{\gamma_n},Y_{\gamma_n})\indic_{\{\tau > \gamma_n\}}]\\
 =&\EE_{x,y}[ e^{-r (\tau \wedge \gamma_n)}V(X_{\tau \wedge \gamma_n},Y_{\tau \wedge \gamma_n})] \\
&+ \EE_{x,y}[ e^{-r \tau }(V^{(n)}(X_{\tau},Y_{\tau })-V(X_{\tau},Y_{\tau }))\indic_{\{\tau \leq \gamma_n\}}]
\end{align*}
where in the second inequality we used that $G_2\ge V$. Now we take limits as $n\to\infty$. Recalling that $V^{(n)}\le V$, that $0\le V(x,y) \le x+\eps_0$ and that $e^{-r\tau}X_{\tau}$ is integrable, the second term in the last expression above converges to zero by dominated convergence. Moreover, Fatou's Lemma yields,
\begin{equation}\label{supermart_gammainfty}
V(x,y) \ge \EE_{x,y}[ e^{-r (\tau \wedge \gamma_\infty)}V(X_{\tau \wedge \gamma_\infty},Y_{\tau \wedge \gamma_\infty})].
\end{equation}

Since $\tau\in\CT$ was arbitrary the process $e^{-r (t \wedge \gamma_\infty)}V(X_{t \wedge \gamma_\infty},Y_{t \wedge \gamma_\infty})$, $t\ge0$ is a super-martingale.
Noticing that $\gamma_\infty \ge \gamma_*$ and choosing $\tau=\rho\wedge\gamma_*$ in \eqref{supermart_gammainfty} for some $\rho\in\CT$, we see that also the process $e^{-r (t \wedge \gamma_*)}V(X_{t \wedge \gamma_*},Y_{t \wedge \gamma_*})$, $t\ge0$ is a super-martingale as claimed. As it is a non-negative super-martingale, Fatou's lemma gives 
\[
V(x,y)\ge \liminf_{t\to\infty} \EE_{x,y}[ e^{-r (t\wedge \gamma_*)}V(X_{t \wedge \gamma_*},Y_{t \wedge \gamma_*})]\ge \EE_{x,y}[ e^{-r \gamma_*}V(X_{\gamma_*},Y_{\gamma_*})]
\]
hence the super-martingale is closed.

Finally, we prove that $\gamma_*$ is optimal for the seller, i.e.~player 2. We have
\begin{align*}
V(x,y) &\ge \EE_{x,y}[ e^{-r (\tau \wedge \gamma_*)}V(X_{\tau \wedge \gamma_*},Y_{\tau \wedge \gamma_*})]\\
&= \EE_{x,y}[ e^{-r \tau }V(X_{\tau},Y_{\tau })\indic_{\{\tau \leq \gamma_*\}}]+\EE_{x,y}[ e^{-r \gamma_* }G_2(X_{\gamma_*})\indic_{\{\tau > \gamma_*\}}]\\
&\ge \EE_{x,y}[ e^{-r \tau }G_1(X_{\tau})\indic_{\{\tau \leq \gamma_*\}}]+\EE_{x,y}[ e^{-r \gamma_* }G_2(X_{\gamma_*})\indic_{\{\tau > \gamma_*\}}]\\
&=M_{x,y}(\tau,\gamma_*)
\end{align*}
Taking the supremum over $\tau$ gives the optimality of strategy $\gamma_*$ for player 2.

It remains to prove the sub-martingale property. Let us denote
\begin{equation} \label{region1n}
\CS_1^{(n)}=\big\{ (x,y) \in \RR_+\times [0,1]\, \tq\, V^{(n)}(x,y)=G_1^{(n)}(x) \big\},
\end{equation}
the stopping region of player $1$. Notice that an analogous set can be defined relatively to $V$ and $G_1$ (see \eqref{region1}). In Section \ref{sec:regions} properties of $\CS_1$ are proven in Lemma \ref{monotonic} by using continuity and monotonicity of $V$. The same methodology can be applied to $V^{(n)}$ to prove analogous properties for $\CS_1^{(n)}$. To be precise it is worth noticing that \eqref{right-c} holds for $V^{(n)}$ provided that $x\le x'\le n$ therein. The rest of (iii) in Lemma \ref{monotonic} follows by recalling that $[n,+\infty)\times[0,1]\subseteq \CS_1$. The analogy holds with Corollary \ref{cor:b} as well. In particular there exists a non-increasing lower-semi-continuous map $b_1^{(n)}: [0,1] \rightarrow \RR_+$ such that for $x\ge K$ it holds $(x,y)\in \CS_1^{(n)} \Leftrightarrow x \geq b_1^{(n)}(y)$.

Observe that if $(x,y) \in \CS^{(n+1)}_1$ is such that $x<n$, we have 
\[V^{(n)}(x,y)\leq V^{(n+1)}(x,y)=G_1^{(n+1)}(x)=G_1(x)=G_1^{(n)}(x),\] 
which implies $(x,y)\in \CS_1^{(n)}$. Together with the fact that $[n,\infty)\times [0,1] \subset \CS_1^{(n)}$, this implies that $\CS_1^{(n+1)}\subset \CS_1^{(n)}$. By the same arguments, we prove that $\CS_1^{(n)} \subset \CS_1$. We deduce that the sequence $b_1^{(n)}$ is non-decreasing and that
$\tau_n$ is a non-decreasing sequence of stopping times such that $\tau_\infty := \lim_n \tau_n \leq \tau_*$. Moreover, if $(x,y) \in \RR_+ \times [0,1)$ is such that $x<b_1(y)$, then $V(x,y)> G_1(x)$ and, for sufficiently large $n$, we have $V^{(n)}(x,y) > G_1(x)=G_1^{(n)}(x)$, implying that $x<b_1^{(n)}(y)$. We deduce that $b_1^{(n)}$ converges to $b_1$ on $[0,1)$ pointwise.

Now, we prove that $\tau_\infty=\tau_*$. Since  $\tau_\infty\leq \tau_*$, it is sufficient to show that the equality holds $\PP_{x,y}$-almost surely on $\{\tau_\infty <\infty\}$. For $(x,y)\in \CS_1$ the claim is trivial. Fix $(x,y)\notin \CS_1$ and $\omega \in \{\tau_\infty <\infty\}$. Since the sequence $(b^{(n)}_1)_{n\in\NN}$ is non-decreasing then for fixed $m\in\NN$ and any $n\ge m$ we have $b_1^{(n)}(Y_{\tau_n}(\omega))\ge b_1^{(m)}(Y_{\tau_n}(\omega))$. The latter implies
\begin{align*}
\liminf_{n\to\infty}b_1^{(n)}(Y_{\tau_n}(\omega))\ge b_1^{(m)}(Y_{\tau_\infty}(\omega))
\end{align*}
by using that $Y_{\tau_n}(\omega)\to Y_{\tau_\infty}(\omega)$ as well.
Taking the supremum over $m$ in the right-hand side of the above expression and recalling that $b^{(m)}_1\uparrow b_1$ pointwise we conclude
\begin{align}\label{liminf}
\liminf_{n\to\infty} b_1^{(n)}(Y_{\tau_n}(\omega))\ge b_1(Y_{\tau_\infty}(\omega)).
\end{align}

Since $X_{\tau_n}\ge b_1(X_{\tau_n})$, $\PP_{x,y}$-a.s.~for all $n\in\NN$, using continuity of paths and \eqref{liminf}, we also find
\begin{align*}
X_{\tau_\infty}=\lim_{n\to\infty}X_{\tau_n}\ge \liminf_{n\to\infty}b^{(n)}_1(Y_{\tau_n})\ge b_1(Y_{\tau_\infty})\quad\text{$\PP_{x,y}$-a.s.}
\end{align*}
which implies $\tau_\infty\geq \tau_*$, $\PP_{x,y}$-a.s.~as requested.

Finally we notice that the process $e^{-r(t\wedge \tau_n)} V^{(n)}(X_{t\wedge \tau_n},Y_{t \wedge \tau_n})$ is a sub-martingale for all $n$. Since $\tau_n\le \tau_{n+p}$ for all $n,p\ge 0$, we deduce that 
\[
V^{(n+p)}(x,y) \leq \EE_{x,y}[e^{-r(t\wedge \tau_n)} V^{(n+p)}(X_{t\wedge \tau_n},Y_{t \wedge \tau_n})].
\]
Letting $p\to\infty$, monotone convergence implies that 
\[ V(x,y) \leq \EE_{x,y}[e^{-r(t\wedge \tau_n)} V(X_{t\wedge \tau_n},Y_{t \wedge \tau_n})]\]
for all $n$. Taking $n\to\infty$ and recalling that $e^{-r(t\wedge \tau_n)} V(X_{t\wedge \tau_n},Y_{t \wedge \tau_n}) \leq \sup_{s \in [0,t]} e^{-rs} X_s \in L^1(\PP_{x,y})$, bounded convergence implies
\[ V(x,y) \leq \EE_{x,y}[e^{-r(t\wedge \tau^*)} V(X_{t\wedge \tau^*},Y_{t \wedge \tau^*})].\]
The above result and the Markov property imply that $e^{-r(t\wedge \tau^*)} V(X_{t\wedge \tau^*},Y_{t \wedge \tau^*})$ is a sub-martingale as claimed. 

\section{Proof of Lemma \ref{lem:hitC}}\label{app:lemhitC}
The proof is more easily carried out considering the boundaries $c_1$ and $c_2$ rather than $b_1$ and $b_2$. However we incur no loss of generality thanks to the equivalence of the problem formulation with respect to the coordinates $(x,y)$ and $(z,y)$. We provide a full argument for $k> 0$ but a completely symmetric proof holds for $k<0$.

Since $c_1$ is non-decreasing and $Y$ is non-degenerate at all points of $(0,1)$, the law of iterated logarithm implies that $\hat\tau_*=\check \tau$, $\mathbb{P}$-a.s. Similarly if $(Z,Y)$ hits the line $y_K(\cdot)$ from above then it will immediately cross it downwards.

For the same result relative to the boundary $c_2$ we repeat the steps in \cite[Cor.~8]{CP15}. In particular let us introduce some notation
\begin{align} 
&\hat{\gamma}_\eps:=\inf\{t>0\,|\,Y_t\ge c_2(Z_t)+\eps\},\quad \hat{\gamma}^\delta_\eps:=\inf\{t>\delta\,|\,Y_t\ge c_2(Z_t)+\eps\}\\[+3pt]
&\check \gamma_\eps:=\inf\{t> 0\,|\,Y_t> c_2(Z_t)+\eps\},\quad \check{\gamma}^\delta_\eps:=\inf\{t>\delta\,|\,Y_t> c_2(Z_t)+\eps\}
\end{align}
so that $\hat \gamma_*=\hat\gamma_0$ and $\check\gamma=\check \gamma_0$. We have $\hat\gamma_+:=\lim_{\eps\to0}\hat\gamma_\eps=\check\gamma$ and 
\[
\hat\gamma^\delta_0\le \hat\gamma^\delta_+:=\lim_{\eps\to 0}\hat\gamma^\delta_\eps=\check \gamma^\delta_0.
\]
Assume that for any $(z,y)\in R_K$ we have
\begin{align}\label{CP1}
\mathbb{P}_{z,y}(\check \gamma^\delta_0>t)\le \mathbb{P}_{z,y}(\hat\gamma^\delta_0>t)
\end{align}
so that $\check\gamma^\delta_0=\hat\gamma^\delta_0$, $\mathbb{P}_{z,y}$-a.s. Then 
\begin{align*}
\check\gamma=\lim_{\eps\to 0}\hat\gamma_\eps=\lim_{\eps\to 0}\lim_{\delta\to 0}\hat\gamma^\delta_\eps=&\lim_{\delta\to 0}\lim_{\eps\to 0}\hat\gamma^\delta_\eps=\lim_{\delta\to 0}\check \gamma^\delta_0
=\lim_{\delta\to 0}\hat \gamma^\delta_0=\hat\gamma_0=\hat\gamma_*
\end{align*}
where the last limit is easily verified by definition of $\hat \gamma^\delta_0$ and we could swap the limits because $\hat \gamma^\delta_\eps$ is non-decreasing in both $\delta$ and $\eps$.
\vspace{+4pt}

Now it remains to verify \eqref{CP1}. We start by noticing that any interval of the form $(\delta,t)$ may be decomposed into the union of countably many intervals over which $c_2$ is either strictly increasing or flat. Consider the latter, i.e.~let $\CI\subset \mathbb{R}$ be an interval such that $c_2(\zeta)=y_0$ for $\zeta\in\CI$ and a fixed $y_0\in(0,1)$. Fix also $(z,y)\in R_K$, then it is immediate to check that on the event $\{\hat\gamma_*\in\CI\}$ one has $\hat\gamma_*=\check\gamma$, $\mathbb P_{z,y}$-a.s., because $Y$ immediately crosses $y_0$ after reaching it. This in particular implies that 
\begin{align}\label{CP2}
\mathbb P_{z,y}\left(Y_s\le c_2(Z_s),\,\forall s\in\CI\right)=\mathbb P_{z,y}\left(Y_s< c_2(Z_s),\,\forall s\in\CI\right).
\end{align}
Next we fix $h_0\in(0,\delta/2)$ so that for $h\in(0,h_0)$ we have $c_2(Z_s)\le c_2(Z_{s+h})$, $\mathbb P_{z,y}$-a.s., because $c_2$ and $Z$ are non-decreasing. Moreover \emph{the inequality is strict} whenever $c_2$ is strictly increasing.  Hence, the latter consideration and \eqref{CP2} imply 
\begin{align*}
\mathbb{P}_{z,y}(\check\gamma^\delta_0>t)=&\mathbb P_{z,y}\left(Y_s\le c_2(Z_s),\,\forall s\in(\delta,t]\right)\\
\le& \mathbb P_{z,y}\left(Y_s< c_2(Z_{s+h}),\,\forall s\in(\delta,t]\right)\\
=&\mathbb P_{z,y}\left(Y_{r-h}< c_2(Z_{r}),\,\forall r\in(\delta+h,t+h]\right)\\
\le&\mathbb P_{z,y}\left(Y_{r-h}< c_2(Z_{r}),\,\forall r\in(\delta+h_0,t]\right)\\
\le&\mathbb P_{y}\left(Y_{r-h}< c_2(z+kr),\,\forall r\in(\delta+h_0,t]\right),
\end{align*}
where in the last expression we have expressed $Z$ explicitly so that it can be treated effectively as a `time' variable. 

We now denote by $p_Y$ and $m_Y$ the probability transition density and the speed measure of $Y$, respectively. Then by using the Markov property of $Y$ we obtain
\begin{align*}
\mathbb{P}_{z,y}(\check\gamma^\delta_0>t)\le&\mathbb P_{y}\left(Y_{r-h}< c_2(z+kr),\,\forall r\in(\delta+h_0,t]\right)\\
=&\mathbb E_{y}\left[\mathbb P_{Y_{\delta/2-h}}\left(Y_{r-\delta/2}<c_2(z+kr),\,\forall r\in(\delta+h_0,t]\right)\right]\\
=&\int_0^1p_Y(\delta/2-h,y,\xi)\mathbb P_{\xi}\left(Y_{r-\delta/2}<c_2(z+kr),\,\forall r\in(\delta+h_0,t]\right)m_Y(d\xi).
\end{align*}
Scheff\'e's theorem (see page 224 in \cite{Bil}) guarantees that 
\begin{align*}
\lim_{h\to 0}\int_0^1|p_Y(\delta/2-h,y,\xi)-p_Y(\delta/2,y,\xi)|m_Y(d\xi)=0
\end{align*}
thus implying that taking limits as $h\to 0$ we obtain
\begin{align*}
\mathbb{P}_{z,y}(\check\gamma^\delta_0>t)\le&\int_0^1p_Y(\delta/2,y,\xi)\mathbb P_{\xi}\left(Y_{r-\delta/2}<c_2(z+kr),\,\forall r\in(\delta+h_0,t]\right)m_Y(d\xi)\\
=&\mathbb P_{y}\left(Y_{r}<c_2(z+kr),\,\forall r\in(\delta+h_0,t]\right)=\mathbb{P}_{z,y}(\hat\gamma^{\delta+h_0}_0>t).
\end{align*}
Letting now $h_0\to0$ we find \eqref{CP1} as claimed, because it is easy to verify that $\hat\gamma^{\delta+h_0}_0\downarrow\hat\gamma^{\delta}_0$.

\section{Game with complete information: summary of results}

In this appendix we provide a short summary of existing results concerning the stopping regions in the game call option problem with perfect information, i.e.~when $y$ is either $0$ or $1$. The material below is based on results contained in \cite{yam}, for $y=1$, and \cite{ekstromvilleneuve}, for $y=0$.

We recall $M_{x,y}(\tau,\gamma)$ as in \eqref{gameformulationone} and emphasise that here $y=\{0,1\}$.
Denote by $V_{\infty}$ the value of the optimal stopping problem for the buyer when there is no possible seller's cancellation (i.e. when $\gamma=+\infty$):
\[
V_{\infty}(x,y):= \sup_{\tau} M_{x,y}(\tau,+\infty)
\]
and by $V_K$ the value of the problem when $\gamma=\gamma_K$, i.e.~the hitting time of $\{K\}$ by $X$:
\[ 
V_{K}(x,y):= \sup_{\tau} M_{x,y}(\tau,\gamma_K). 
\]
We also define the critical dividend levels $\delta_1 < \delta_2$ by
\[ \delta_1:= \inf\{ \delta>0 \tq \lim_{x \downarrow K} \frac{V_K(x,1)- \varepsilon}{x-K} \leq 1\}\quad\text{and}\quad \delta_2:= \inf \{ \delta>0  \tq V_{\infty}(K,1) < \varepsilon \}.\]

For the process $X$ we recall that the fundamental solutions of 
\[ \tfrac{\sigma^2}{2}x^2 f''(x)+(r-\delta)x f'(x) -rf(x)=0,\quad x>0\]
are $\psi(x)=x^{\lambda_1}$ and $\phi(x)=x^{\lambda_2}$,
with $\psi$ increasing (notice that $\lambda_1 >1$) and $\phi$ decreasing, and where $\lambda_2<\lambda_1$ solve
\[ \tfrac{\sigma^2}{2}\lambda^2 + (r-\delta_0- \frac{\sigma^2}{2}) \lambda - r =0.\] 

The next Proposition summarises results of \cite{yam} and \cite[Sec.~5.1]{ekstromvilleneuve}.
\begin{pro} \label{perfectinfo}
The following four cases hold
\begin{itemize}
\item \textbf{Case 1:} If $\varepsilon_0 \geq K$ we have  
\begin{itemize}
\item $\CS_2 \cap \{y=0 \}=\CS_2\cap \{y=1 \} = \emptyset$,
\item $\CS_1\cap \{y=0 \}=\emptyset$ and $\CS_1\cap \{y=1\}=[\frac{\lambda_1}{\lambda_1 - 1} K,+\infty)$.
\end{itemize}

\item \textbf{Case 2:} If $\varepsilon_0 < K$ and $\delta_0 \geq \delta_2$ we have 
\begin{itemize}
\item $\CS_2 \cap \{y=0 \}=[K, +\infty)$ and $\CS_2 \cap \{y=1 \}=\emptyset$.
\item $\CS_1 \cap \{y=0 \}=\emptyset$ and $\CS_1 \cap \{y=1 \}=[\frac{\lambda_1}{\lambda_1 - 1} K,+\infty)$.
\end{itemize}

\item \textbf{Case 3:} If $\varepsilon_0 <K$ and $\delta_1 \leq \delta_0 < \delta_2$ we have
\begin{itemize}
\item $\CS_2\cap \{y=0 \}=[K,+\infty)$ and $\CS_2 \cap \{y=1 \}=\{K\}$.
\item $\CS_1\cap \{y=0 \}=\emptyset$ and  $\CS_1 \cap \{y=1 \}|=[\alpha_0, +\infty)$, 
where $\alpha_0$ is the unique solution of 
\begin{equation*}
\left(\frac{\alpha_0-K}{\alpha_0}\lambda_1 - 1 \right)\alpha_0^{\lambda_1-\lambda_2}-\left(\frac{\alpha_0-K}{\alpha_0}\lambda_2 - 1 \right)K^{\lambda_1-\lambda_2}= \frac{\varepsilon_0}{K}(\lambda_1-\lambda_2) \alpha_0^{\lambda_1-1}K^{1-\lambda_2}.
\end{equation*}
\end{itemize}

\item \textbf{Case 4:} If $\varepsilon_0 <K$ and $0<\delta_0 < \delta_1$ we have
\begin{itemize}
\item $\CS_2\cap \{y=0 \} = [K,+\infty)$ and $\CS_2\cap \{y=1 \}=[K, \beta_1]$\\
\item $\CS_1 \cap \{y=0 \} = \emptyset$ and $\CS_1\cap \{y=1 \}=[\alpha_1,+\infty)$ where $(\alpha_1,\beta_1)$ is the unique solution of the system of equations
\begin{align*}
\left\{ 
\begin{array}{r}  
\left(\frac{\alpha_1-K}{\alpha_1}\lambda_1 - 1 \right)\alpha_1^{\lambda_1-\lambda_2}-\left(\frac{\alpha_1-K}{\alpha_1}\lambda_2 - 1 \right)\beta_1^{\lambda_1-\lambda_2}=\frac{\beta_1-K+\varepsilon_0}{\beta_1}(\lambda_1-\lambda_2) \alpha_1^{\lambda_1-1}\beta_1^{1-\lambda_2}\\[+12pt] 
\left(\frac{\beta_1-K+\varepsilon_0}{\beta_1}\lambda_1 - 1 \right)\beta_1^{\lambda_1-\lambda_2}-\left(\frac{\beta_1-K+\varepsilon_0}{\beta_1}\lambda_2 - 1 \right)\alpha_1^{\lambda_1-\lambda_2}= \frac{\alpha_1-K}{\alpha_1}(\lambda_1-\lambda_2) \beta_1^{\lambda_1-1}\alpha_1^{1-\lambda_2}
\end{array} 
\right.
\end{align*}
\end{itemize}
\end{itemize}
For all the above cases, the pair $(\gamma_*,\tau_*)$ defined in Theorem \ref{value} is not a saddle point if $y=0$, and is a saddle point if $y=1$.
\end{pro}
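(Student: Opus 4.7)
The plan is to exploit the degeneracy of the $Y$ dynamics at the boundary of $[0,1]$. Since the diffusion coefficient $-(\delta_0/\sigma)Y(1-Y)$ in \eqref{XY} vanishes at $0$ and at $1$ and the equation has no drift, both $y=0$ and $y=1$ are absorbing states: $Y^0_t\equiv 0$ and $Y^1_t\equiv 1$ for every $t\ge 0$, $\PP$-almost surely. Consequently, at $y=0$ the process $X$ is a geometric Brownian motion with drift $r$ (no effective dividend), while at $y=1$ it is a geometric Brownian motion with drift $r-\delta_0$ (dividend rate $\delta_0$). The bi-dimensional game therefore collapses to one of the two one-dimensional perpetual Dynkin-game setups already solved in \cite[Sec.~5.1]{ekstromvilleneuve} (for $y=0$) and in \cite{yam} (for $y=1$), and the main work consists in importing their results for the geometry of the stopping regions into our notation.

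For $y=0$, the assertion $\CS_1\cap\{y=0\}=\emptyset$ is the classical no-early-exercise result for the American call on a non-dividend-paying stock, combined with the fact that the seller's cancellation payoff $G_2$ strictly dominates $G_1$; the characterization of $\CS_2\cap\{y=0\}$ is exactly \eqref{y=0}, whose proof in \cite[Sec.~5.1]{ekstromvilleneuve} solves the seller's optimal stopping problem against $\tau=+\infty$. Non-existence of a saddle at $y=0$ follows from the emptiness of $\CS_1\cap\{y=0\}$, which forces $\tau_*=+\infty$ by the definition \eqref{tau*}; then the seller's choice $\gamma=+\infty$ combined with the convention after \eqref{gameformulationone} gives $M_{x,0}(\tau_*,+\infty)=0$, whereas $V(x,0)>0$ by Remark \ref{strictpositive}, violating the second saddle-point inequality.

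For $y=1$ the construction of $V(\,\cdot\,,1)$ and of its stopping regions proceeds case by case, following \cite{yam}. In each case the candidate is built as a piecewise combination of the fundamental solutions $\psi(x)=x^{\lambda_1}$ and $\phi(x)=x^{\lambda_2}$ on the continuation region, pasted to $G_1$ (at the upper boundary) and/or $G_2$ (at the lower boundary) via value-matching and smooth-fit. Case 1 corresponds to seller inactivity and reduces to the standard perpetual American call with exercise threshold $\lambda_1 K/(\lambda_1-1)$; Cases 2 and 3 yield a single nonlinear equation determining $\alpha_0$; Case 4 requires a coupled two-equation system for $(\alpha_1,\beta_1)$. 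Standard monotonicity and limiting arguments on $(0,\infty)$ give existence and uniqueness of these free boundaries, and an It\^o-based verification theorem identifies the candidate with $V(\,\cdot\,,1)$. Saddle-point existence at $y=1$ is immediate because $e^{-rt}X^{x,1}_t=x\exp(\sigma W_t-(\sigma^2/2+\delta_0)t)\to 0$ fast enough for uniform integrability of the payoffs, so the Ekstr\"om--Peskir framework \cite{ekstrompeskir} applies directly.

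The hard part is the Case 4 analysis, namely the well-posedness of the coupled smooth-fit system for $(\alpha_1,\beta_1)$ and the verification that the resulting candidate is neither dominated nor dominates the stopping payoffs in the wrong way; both points are fully worked out in \cite{yam}, so it suffices to invoke that reference. Apart from this, the only substantive new content is the short argument (using $\tau_*=+\infty$ and \eqref{infty}) showing that no saddle exists at $y=0$, as sketched above.
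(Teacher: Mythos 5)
Your approach is essentially the paper's own: the appendix explicitly presents Proposition~\ref{perfectinfo} as a summary of results imported from \cite{yam} (for $y=1$) and \cite{ekstromvilleneuve} (for $y=0$), relying on the fact that $\{0,1\}$ are absorbing for $Y$ so that the game collapses to the one-dimensional complete-information problems solved there, and gives no further proof. Your added argument that emptiness of $\CS_1\cap\{y=0\}$ forces $\tau_*=+\infty$, whence the choice $\gamma=+\infty$ together with $V(x,0)>0$ (which follows from Remark~\ref{strictpositive} and monotonicity of $V$ in $y$) violates the second saddle inequality, is correct and is a sensible way to justify the final sentence of the proposition.
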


\end{document}